\def\N{\mathbb N}
\def\Z{\mathbb Z}
\def\Zd{{\mathbb Z}^d}
\def\R{\mathbb R}
\def\sigb{\mathbf{\Sigma}}
\def\X{\mathrm X}
\def\1{\mathrm{I}}
\def\d{\mathrm d}
\def\L{\mathcal L}
\def\XX{\mathfrak X}
\def\a{\alpha}
\def\g{\gamma}
\def\b{\beta}
\def\gg{G_{\alpha,\beta,\gamma,\delta}^{k,l}}
\def\vfi{\varphi}
\def\etb{{\mathcal{S}}^k_{x,y}\eta}  
\def\zetb{\mathcal{S}^l_{x,y}\zeta}
\def\gen{g}
\def\gkab{\gen_{\alpha,\beta}^{k}}
\def\gka{\gen_{\alpha}^{k}}
\def\gkb{\gen_{\ast,\beta}^{k}}
\def\GMx{mass migration\ }
\def\Lip{\mathrm{\bf L}}
\def\mp{\bar{\mu}_{\varphi}}
\newtheorem{theorem}{Theorem}[section]
\newtheorem{lemma}[theorem]{Lemma}
\newtheorem{proposition}[theorem]{Proposition}
\newtheorem{corollary}[theorem]{Corollary}
\newtheorem{remark}[theorem]{Remark}
\newtheorem{definition}[theorem]{Definition}
\newtheorem{example}{Example}
\title[Mass Migration Processes]{Invariant measures of Mass Migration Processes}   
\author{L.Fajfrov\'a, T.Gobron, E.Saada}
 \address{Lucie Fajfrov\'a, 
 Institute of Information Theory and Automation -- Academy of Sciences of the Czech Republic, Pod
 Vod\'arenskou v\v{e}\v{z}\'{\i}~4, 182\,08 Praha 8. Czech Republic.} \email{fajfrova@utia.cas.cz}
 \address{Thierry Gobron, CNRS, UMR 8089, LPTM\\ Universit\'e de Cergy-Pontoise; 
 Site de Saint-Martin 2, 2 avenue Adolphe Chauvin, Pontoise
 \\ 95031 Cergy-Pontoise cedex, France} \email{Thierry.Gobron@u-cergy.fr}
 \address{Ellen Saada, CNRS, UMR  8145, Laboratoire MAP5, Universit\'e Paris Descartes, 
 Sorbonne Paris Cit\'e, 45 rue des Saints-P\`eres, 75270 Paris cedex 06. France.}
 \email{Ellen.Saada@mi.parisdescartes.fr}
\date{\today}
\begin{document}
\pagestyle{myheadings}
\begin{abstract}
 We introduce the Mass Migration Process (MMP), 
a conservative particle system on $\N^{\Zd}$. It consists in jumps of $k$ particles ($k\ge 1$)
between sites, with a jump rate depending only on the state of the system at the departure 
and arrival sites of the jump. It generalizes misanthropes processes, hence 
in particular zero range and target processes. After the construction of MMP,
our main focus is on its invariant measures. We obtain necessary and sufficient
conditions for the existence of translation-invariant and invariant product probability measures. 
In the particular cases of asymmetric mass migration zero range and mass migration 
target dynamics, these conditions yield explicit solutions.  
If these processes are moreover attractive, we obtain a full characterization 
of all translation-invariant, invariant probability  measures. 
We also consider attractiveness properties (through couplings) and 
condensation phenomena for MMP. We illustrate our results on many  examples; 
 in particular, we prove the  coexistence of both condensation 
 and attractiveness in one of them. 
\end{abstract}
\maketitle

\section{Introduction}         \label{s-intro}

In the study of an interacting particle system, an essential tool is the explicit knowledge
of an invariant measure. Conservative systems such as exclusion or zero-range processes
possess a one-parameter family of translation invariant and invariant product
probability measures, where the parameter represents the average particle density per site 
\cite{spitzer,Lig,andjel}. Under conditions on the rates, this is also the case
for misanthropes processes, which include the more recently studied target
processes \cite{Cocozza,godreche07,LG07}. 
All these  dynamics 
(we call them \emph{single-jump models})  consist in  individual jumps of particles between sites, 
with rates which are the product of two terms: 
 a transition probability giving the direction of the jump, 
 and a function depending on the occupation numbers at the departure and/or arrival
sites of the jump.  Each dynamics has its particular features, which dictate
the precise form of the rates. 

\medskip

However, finding invariant measures is challenging
as soon as one departs from these classical models. 
In this paper we address this question for a class of models
generalizing them, that we call \emph{mass migration processes} 
(abbreviated as MMP). Those dynamics, of state space $\N^{\Z^d}$,
 allow multiple (simultaneous) jumps of particles between sites, 
according to rates  written as above, but
where the function depends also on the number
$k\ge 1$ of particles which jump.  We distinguish among them
the processes for which the rates do not depend  on the 
occupation number of the arrival site and call them 
\emph{mass migration zero range processes}  (abbreviated as MM-ZRP),
and the processes for which the rates do not depend  on the 
occupation number of the departure site (as long as it is non-empty) 
and call them \emph{mass migration target processes}  (abbreviated as MM-TP).

\medskip
 
Examples of such dynamics have appeared in the literature in various contexts
and under various names. Without being exhaustive, we quote
some of them that will illustrate our results:
The totally asymmetric \emph{stick process} has a constant jump rate (independent of 
particles' numbers); it was studied in \cite{Sepa1} in the context of Ulam's problem, 
and a generalization with nearest neighbour jumps
was considered in  \cite{GS}; it possesses a one parameter family of
product geometric invariant measures.  
The MMP is a particular case of
the multiple particle jump model studied in \cite{GS} in the context of attractiveness
properties; under some conditions on the rates, the latter possesses a one parameter family
of translation invariant and invariant measures,
that are not always explicit.
 Dynamics with zero range interaction
and multiple jumps 
were studied in a finite volume setup
in the context of condensation properties: they were called 
\textit{mass transport models} in \cite{EH,EMZ1,EMZ2}, and 
 \textit{generalized zero range processes}
in \cite{GrL} (where the number $k$ of particles that may
jump together was bounded). In those references, the set of sites is 
an interval, a general finite graph or a cube; 
in \cite{EMZ1,EMZ2}, the authors consider a continuous mass instead of particles 
moving among sites. A generic form of stationary product measures is exhibited
for all those dynamics; such an explicit form is required to analyse condensation phenomena, this is why these models
are often chosen as examples (see the reviews \cite{evans0,EH,CG}). 
In the context of exactly solvable models and duality, the  $q$-Hahn asymmetric 
zero range processes are  other  dynamics with zero range interaction
and multiple jumps (see \cite{BC} and references therein) with explicit product invariant measures;
this knowledge is crucial for exact-solvability.  Another possible extension of these models 
are processes in which multiple births and deaths are superimposed to multiple jumps; their attractiveness properties 
have been established in \cite{bor1}, and  then they have been applied to the study of survival and extinction
of species  in \cite{bor2}. 
  
\medskip

In all those cases, a specific derivation is performed for each model to find
stationary product measures, under conditions on the rates if necessary. 
In the present paper, our central result  unifies
and generalizes those derivations, which in particular opens the door to the study
of condensation phenomena and exact solvability for new models, or models not tractable
up to now: In Theorem \ref{th:main2},
we obtain necessary and sufficient conditions on the rates
of a MMP (which is a dynamics in infinite volume)  to guarantee the existence of 
translation-invariant, product invariant probability measures for the process. 
We can state explicitly how these measures look like for single-jump models,
that is misanthropes, zero-range and target processes (abbreviated as MP, ZRP and TP), 
as well as for MM-ZRP and MM-TP. 
For MMP, we then analyse attractiveness properties, as well as condensation issues; 
 our exact formulas for invariant measures enable us to study the relations between 
those two properties. 
Finally, we illustrate our results on various examples, including the ones
previously mentioned. In particular, we prove for the first time that
attractiveness and condensation can coexist on an example of MM-ZRP. 

\medskip

The paper is organized as follows.
In Section \ref{s-model}, we give a formal description and state
 an existence theorem for MMP; proofs are done in Section \ref{sec:construction}.
Section \ref{s-invariant} is devoted to invariant measures of MMP: Theorem \ref{th:main2},
 single-jump models, MM-ZRP and MM-TP. Proofs are done in Section
\ref{s-proofs-sec3}.
Section \ref{s-coupling}  is devoted to attractiveness 
of  MMP, and contains the determination of the extremal translation invariant and invariant measures
for MM-ZRP and MM-TP.
We explain in Section \ref{s-discussion} how known results on condensation for ZRP  can be applied to MMP, and check whether MMP could also be attractive. 
In section \ref{s-examples} we bring examples of MMPs and, for each one, we check whether the conditions established in Sections \ref{s-model}, \ref{s-invariant}, \ref{s-coupling}, and \ref{s-discussion} are satisfied. \\

\section{The model}\label{s-model}
\subsection{Description, existence results}\label{subsec-existence}
\noindent
Let us introduce the \textit{mass migration process}, abbreviated as MMP. 
Particles are located on a countable set $\X$ of sites, typically $\Zd$. 
At a given time $t\ge 0$, for a configuration $\eta_t=\left(\eta_t(x):x\in\X\right)$,
$\eta_t(x)\in\N$ is the number of particles on site $x\in \X$. 
Particles move between sites with respect to the \emph{\GMx dynamics}, that is, $k\ge 1$ particles from the total amount $\a$ 
of particles at a departure site $x$ jump simultaneously to a target site $y$ occupied by $\beta$ particles with a rate  
\begin{equation}\label{rate0} 
p(x,y)\gen^k_{\a,\b}
\end{equation}
where $(p(x,y),x,y\in \X)$ is a transition probability on $\X$, 
and  where, for all $\a,\,\b\in\N$, 
\begin{equation}\label{rateg}
\gen^0_{\a,\beta}=0,\quad\gen^k_{\a,\beta} \hbox{ are nonnegative for } 0<k\leq \alpha,\quad
\hbox{  and }\quad \gen^k_{\a,\beta}=0 \hbox{ for  }  k>\a\hbox{ or  }\a=0. 
\end{equation} 
 This dynamics is {\sl conservative}: the total number of particles involved
in a transition is preserved. 
When  $\X$ is finite, rates (\ref{rate0}) define straightforwardly a 
Markov process $(\eta_t)_{t \geq 0}$ on the state space $\N^{\X}$. 
When  $\X$ is countably infinite, some care is required to define 
$(\eta_t)_{t\ge 0}$ as a Markov process.  For this, we rely on methods by
Andjel \cite{andjel} and Liggett \& Spitzer \cite{liggett-spitzer}
and proceed as follows. Details and proofs are given
in Section \ref{sec:construction}.  \\ \\
To avoid initial configurations that could cause explosions,
we first restrict the state space to
\begin{equation}\label{setX}
\XX=\{ \eta\in \N^{\X}: \|\eta \|<\infty \}          
\end{equation}
where 
\begin{equation}\label{norm-eta}\|\eta \|=\sum_{x\in\X} \eta(x)a_x
\end{equation}
for a positive function $a$ on $\X$ such that, for some  positive constant $M$,
\begin{equation}\label{Mconst}
\sum_{y\in \X} p(x,y) a_y + \sum_{y\in \X} a_y p(y,x) \leq M a_x,\text{ for every $x\in \X$} . 
\end{equation}
and 
\begin{equation}\label{sum-a_x-finite}
\sum_{x\in \X} a_x<\infty.
\end{equation} 
The following proposition and theorem  define the infinitesimal generator $\L$ 
and the corresponding semigroup 
$(S(t), t\geq 0)$ for the Markov process $(\eta_t)_{t \geq 0}$. 
We also introduce the associated Markovian coupled process,
and finally characterize invariant measures for 
 $(\eta_t)_{t \geq 0}$. \\

\noindent
We assume from now on that $\left(p(x,y):\,x,y\in\X\right)$ is an irreducible
transition probability satisfying 
\begin{equation}  \label{mpconst}
\sup\limits_{y\in \X} \sum\limits_{x\in \X} p(x,y)= m_p<\infty.  
\end{equation}
and that the rates satisfy \eqref{rateg}.  Let the set of Lipschitz functions on $\XX$ be
\begin{equation} \label{def-lip}
    \Lip=\{ f:\XX \rightarrow\R\ \text{ such that for some } L_f>0, 
    |f(\eta)-f(\zeta)|\leq L_f \|\eta-\zeta\|,\,\forall\eta,\zeta\in\XX  \}.
\end{equation}
(where $\|\eta-\zeta\|=\sum_{x\in\X} |\eta(x)-\zeta(x)|a_x$). 
\medskip
\begin{proposition}\label{prop:generator}
Assume that there exists a constant $C>0$ such that
\begin{equation}\label{un-assum}
\sum_{k=1}^{\a} k\, \gen^k_{\a,\b}\leq C(\a+\b) \qquad \text{ for all } \a>0,\, \b\geq 0,
\end{equation} 
then the infinitesimal generator defined for $f\in\Lip,\eta\in\XX$ by
\begin{equation}\label{generator}
    \L f(\eta)= \sum_{x,y\in \X}\sum_{k>0} \ p(x,y)  \
                \gen^k_{\eta(x),\eta(y)} 
                \left(f\left(\etb\right)-f\left(\eta\right)\right)
\end{equation}
where
\begin{equation}\label{sxyk}
    (\etb) (z) = \left\{
                      \begin{array}[c]{lll}
			\eta(x)-k  & \quad {\rm if}\ z=x  \;{\rm  and }\; \eta(x)\ge k\\
                        \eta(y)+k  & \quad {\rm if}\  z=y \;{\rm  and }\; \eta(x)\ge k\\\
                        \eta(z)    & \quad {\rm otherwise}
                      \end{array}
          \right.
\end{equation}
satisfies 
\begin{equation}\label{bound-to-generator}
    |\L f(\eta)|\le L_f C(1+M+m_p)\|\eta\|\,. 
\end{equation}
\end{proposition}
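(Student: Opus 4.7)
The plan is to bound $|\L f(\eta)|$ by applying the Lipschitz property of $f$ termwise, then use the growth hypothesis \eqref{un-assum} on the rates to collapse the sum over $k$, and finally invoke the three structural bounds \eqref{Mconst}, \eqref{sum-a_x-finite}, \eqref{mpconst} on $a$ and $p$ to control the remaining sum over sites. The key quantitative point will be that the constant $M$, rather than $2M$, appears in the final bound; this forces a particular grouping of the cross terms.

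I would start by noting that $\etb$ and $\eta$ differ only at $x$ and $y$, each by exactly $k$, so $\|\etb-\eta\|=k(a_x+a_y)$ and hence, by \eqref{def-lip},
\begin{equation*}
    \bigl|f(\etb)-f(\eta)\bigr| \le L_f\,k\,(a_x+a_y).
\end{equation*}
Inserting this into \eqref{generator} and exchanging the order of summation (all terms are nonnegative),
\begin{equation*}
    |\L f(\eta)| \le L_f \sum_{x,y\in\X} p(x,y)\,(a_x+a_y)\sum_{k>0} k\,\gen^k_{\eta(x),\eta(y)}
    \le L_f\,C\sum_{x,y\in\X} p(x,y)\,(a_x+a_y)\bigl(\eta(x)+\eta(y)\bigr),
\end{equation*}
where the second inequality uses \eqref{un-assum}. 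Note that the exchange of summations is legitimate provided the final bound turns out finite, which is what we are about to verify.

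The next step is to expand $(a_x+a_y)(\eta(x)+\eta(y))$ into four pieces and treat them as follows. The ``diagonal'' piece $\sum_{x,y} p(x,y)\,a_x\eta(x) = \sum_x a_x\eta(x) = \|\eta\|$ since $p(x,\cdot)$ is a probability. The piece $\sum_{x,y} p(x,y)\,a_y\eta(y)=\sum_y a_y\eta(y)\sum_x p(x,y)\le m_p\|\eta\|$ by \eqref{mpconst}. The two ``cross'' pieces I would group together: writing
\begin{equation*}
    \sum_{x,y} p(x,y)\,a_x\eta(y)+\sum_{x,y} p(x,y)\,a_y\eta(x)
    =\sum_{z\in\X}\eta(z)\Bigl(\sum_{x\in\X} a_x\,p(x,z)+\sum_{y\in\X} p(z,y)\,a_y\Bigr),
\end{equation*}
the bracket is precisely the left-hand side of \eqref{Mconst} at $x=z$, so it is bounded by $Ma_z$, and the whole expression is $\le M\|\eta\|$.

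The main (minor) obstacle is precisely this grouping: bounding the two cross terms separately would yield $2M$ instead of $M$ in the announced constant, so one must combine them before invoking \eqref{Mconst}. Putting the four contributions together gives $|\L f(\eta)|\le L_f\,C\,(1+M+m_p)\|\eta\|$, which is \eqref{bound-to-generator}. Finiteness of the intermediate sums (hence the legitimacy of the prior Fubini step) follows a posteriori from $\|\eta\|<\infty$ on $\XX$.
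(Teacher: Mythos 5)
Your proof is correct and takes essentially the same route as the paper: its Lemma \ref{bound1} performs exactly this computation (the termwise Lipschitz bound with $\|\etb-\eta\|=k(a_x+a_y)$, then \eqref{un-assum}, then the normalization of $p$, \eqref{mpconst} and \eqref{Mconst}), only stated for the finite-volume generators $\L_n$ uniformly in $n$ before passing to the limit via Lemma \ref{L-n-conv}. In particular, your grouping of the two cross terms so that \eqref{Mconst} is applied once (yielding $M$ rather than $2M$) is the same device used there.
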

\medskip
We stress that the natural condition (\ref{un-assum}) on rates is not strong enough to prove Theorem 
\ref{ex-theorem} below (see Section \ref{sec:construction}), 
hence we introduce for this the following sufficient condition, which implies (\ref{un-assum}): 
There exists a constant $C>0$ such that 
\begin{equation}\label{assum}
\sum_{k=1}^{\a\vee\g} k\, |\gen^k_{\a,\b} - \gen^k_{\g,\delta}| \, \leq \,  C(|\a-\g| + |\b-\delta|) 
\qquad \text{ for all } \a, \b,\g, \delta\geq 0. 
\end{equation}

\smallskip
\begin{theorem}  \label{ex-theorem}
Consider the infinitesimal generator $\L$ given by \eqref{generator},
and assume that condition \eqref{assum}   is satisfied.
Then there exists a Markov semigroup of operators $(S(t), t\geq 0)$, defined
on Lipschitz functions $\Lip $ on $\XX$, and a constant $c>0$ such that   
for all $f\in\Lip,\ t\geq 0$, $\eta\in\XX$, the following items hold:         
\smallskip
\begin{itemize}
\item[\textit{(1)}] $S(t)f\in\Lip $ and $L_{S(t)f}\leq L_fe^{ct}$;  \\
\item[\textit{(2)}] 
$S(t) f(\eta)=f(\eta) + \int\limits_0^{t} \L S(s) f(\eta) \d s $;             \\ 
\item[\textit{(3)}]  if $\sum_{x\in\X}\eta(x)<+\infty$, $S(t) f(\eta)= \mathbb{E}^{\eta} f(\eta_t)$,   \\  
where $(\eta_t)_{t\geq 0}$ on the right-hand side is a (countable state space) 
Markov process with rates~(\ref{rate0}), initial configuration $\eta_0=\eta$, and $\mathbb{E}^{\eta}$
denotes its expectation.
\end{itemize}
\end{theorem}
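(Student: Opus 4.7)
The plan is to follow the Liggett--Spitzer / Andjel construction: approximate $\L$ by finite-range truncations whose semigroups are well-defined through standard countable-state Markov chain theory, then pass to the limit via a coupling estimate that uses \eqref{assum} to control the Lipschitz norm along the evolution. Concretely, for an increasing sequence of finite sets $\Lambda_n\uparrow \X$, I would define a truncated generator $\L_n$ in which only jumps between sites in $\Lambda_n$ are allowed. On any initial configuration in $\XX$, this yields a genuine countable-state Markov process with semigroup $S_n(t)$, since the total mass in $\Lambda_n$ is conserved and only finitely many particles ever move (no explosion). Item (2) then holds for each $S_n(t)$ as a standard Kolmogorov backward identity.

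The crux of the argument is a coupling estimate. I would introduce the \emph{basic coupling} $(\eta_t,\zeta_t)$ on $\XX\times\XX$: for each triple $(x,y,k)$ a simultaneous jump of $k$ particles from $x$ to $y$ in both marginals happens at rate $p(x,y)\min(\gen^k_{\eta(x),\eta(y)},\gen^k_{\zeta(x),\zeta(y)})$, and only in the marginal with the larger rate at rate $p(x,y)|\gen^k_{\eta(x),\eta(y)}-\gen^k_{\zeta(x),\zeta(y)}|$. Applied to $F(\eta,\zeta)=\|\eta-\zeta\|$, the coupled generator $\widetilde{\L}$ changes $F$ by at most $k(a_x+a_y)$ per uncoupled jump, so
\begin{equation*}
\widetilde{\L}F(\eta,\zeta)\,\leq\,\sum_{x,y\in\X}p(x,y)\sum_{k\geq 1}k(a_x+a_y)\,\bigl|\gen^k_{\eta(x),\eta(y)}-\gen^k_{\zeta(x),\zeta(y)}\bigr|.
\end{equation*}
Bounding the inner sum by $C(|\eta(x)-\zeta(x)|+|\eta(y)-\zeta(y)|)$ via \eqref{assum}, then using \eqref{Mconst} and \eqref{mpconst} to absorb the weights $a_x+a_y$ against $p$, gives $\widetilde{\L}F\leq cF$ for a constant $c$ depending only on $C$, $M$, and $m_p$. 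A Gronwall step yields $\E^{(\eta,\zeta)}\|\eta_t-\zeta_t\|\leq e^{ct}\|\eta-\zeta\|$, which already furnishes item (1) at the level of each $S_n(t)$.

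The same coupling, applied to compare $S_n(t)f$ with $S_m(t)f$ for $n>m$ (where the two dynamics differ only by jumps involving at least one site outside $\Lambda_m$), shows that $\{S_n(t)f(\eta)\}_n$ is a Cauchy sequence for every $f\in\Lip$ and $\eta\in\XX$; the missing jumps contribute norms that tend to zero thanks to \eqref{sum-a_x-finite}. Define $S(t)f(\eta)$ as the limit. Item (1) is inherited from the uniform Lipschitz bound. For item (2), I would pass to the limit in the identity $S_n(t)f(\eta)=f(\eta)+\int_0^t\L_n S_n(s)f(\eta)\,\d s$ using \eqref{bound-to-generator} and dominated convergence (the Lipschitz control of both $S_n(s)f$ and $\L$ in terms of $\|\eta\|$ gives the required integrability). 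Item (3) is immediate: when $\eta_0$ has finitely many particles the honest Markov process $(\eta_t)$ with rates \eqref{rate0} is well-defined on a countable state space; it coincides with $\eta_t^n$ up to the first time a particle would leave $\Lambda_n$, an event whose probability vanishes as $n\to\infty$ by the coupling bound, so $S(t)f(\eta)=\E^\eta f(\eta_t)$.

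The main obstacle is the coupling estimate, and specifically the necessity of the stronger hypothesis \eqref{assum} rather than \eqref{un-assum}: because up to $k$ particles move in a single transition, the variation of $\|\eta-\zeta\|$ under an uncoupled jump is weighted by $k$, and it is precisely the factor $k$ inside the sum on the left-hand side of \eqref{assum} that converts the basic coupling's rate difference into a multiple of the current distance. Without this, $\widetilde{\L}F$ cannot be closed in terms of $F$ and the Gronwall argument breaks down. Once the coupling estimate is in place, the rest of the proof is a standard Liggett--Spitzer approximation argument.
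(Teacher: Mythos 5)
Your proposal is correct in outline and follows the same architecture as the paper (finite-volume truncation of $p$, a coupling estimate giving the Lipschitz bound $L_{S_n(t)f}\le L_f e^{ct}$, then a Cauchy argument and passage to the limit, exactly the Liggett--Spitzer/Andjel scheme of Section \ref{sec:construction}). The one genuinely different ingredient is the coupling. You use the basic coupling, matching $k$-particle jumps from $x$ to $y$ in the two marginals at rate $p(x,y)\,\gen^k_{\eta(x),\eta(y)}\wedge\gen^k_{\zeta(x),\zeta(y)}$ and letting the excess run uncoupled; its marginals are indeed correct, and for the function $F(\eta,\zeta)=\|\eta-\zeta\|$ the uncoupled rates are exactly the differences $|\gen^k_{\eta(x),\eta(y)}-\gen^k_{\zeta(x),\zeta(y)}|$, so the bound $\overline{\L}F\le cF$ is immediate from \eqref{assum} together with \eqref{Mconst}, \eqref{mpconst}. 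The paper instead runs the whole construction with the coupling of \cite{GS}, whose rates $G^{k,l}_{\a,\b,\g,\delta}$ are given by \eqref{coupling}; there the analogous estimate is not immediate and is the technical heart of the appendix (Lemma \ref{lem:key}, inequality \eqref{key}). What the paper's choice buys is reuse: the same coupling is increasing under the attractiveness conditions and is needed in Section \ref{s-coupling}, whereas the basic coupling fails to preserve the partial order once $k\ge 2$ particles jump simultaneously (this is what the paper means by basic coupling being ``not valid'' here). For Theorem \ref{ex-theorem} alone your choice is legitimate and shortens the key estimate. One point where you are too quick: the comparison of $S_n(t)f$ with $S_m(t)f$ is a comparison of two \emph{different} dynamics, so ``the same coupling'' does not literally apply; the paper handles it with the interpolation formula $S_n(t)f-S_m(t)f=\int_0^t S_n(s)(\L_n-\L_m)S_m(t-s)f\,ds$ together with the moment bound of Lemma \ref{an2.1} and the estimate \eqref{bound-Hnm}, and summability \eqref{sum-a_x-finite} alone is not enough — you also need control of $\mathbb{E}^\eta\|\eta_s\|$ (or of $\mathbb{E}^\eta\eta_s(y)$) to make the boundary terms vanish. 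Either a coupling of the $n$- and $m$-dynamics or the interpolation formula will do, but that propagation estimate must be supplied; with it, your argument goes through, and items \textit{(2)} and \textit{(3)} follow as you indicate.
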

\medskip
Having a Markov semigroup $S(t)$ on $\Lip$, Daniell-Kolmogorov extension theorem (e.g. \cite[Theorem\,31.1]{RW})
yields the corresponding Markov process, defined by probabilities $P^{\eta}$ on 
trajectories where projections $P^{\eta}(\pi_t\in \cdot)$ concentrate on $\XX$ and satisfy 
$\int f(\xi)P^{\eta}(\pi_t\in d\xi)=S(t)f(\eta)$. \\

We construct similarly a (Markovian) \emph{coupled process} 
$\left( \eta_t,\zeta_t  \right)_{t\ge 0}$ on $\XX\times \XX$ (where we set
$\|(\eta,\zeta)\|=\|\eta\|+\|\zeta\|$) whose
 marginals $(\eta_t)_{t\ge 0}$, $(\zeta_t)_{t\ge 0}$ are
copies of the original MMP. As detailed in Section \ref{sec:construction},
such a process in finite volume  enables to prove Theorem \ref{ex-theorem}, and it
can then be extended analogously to infinite volume, with a Markov semigroup $(\overline{S}(t):t\geq 0)$ 
derived from an infinitesimal generator $\overline{\L}$. 
 Whereas basic coupling was the natural coupling to use in \cite{andjel} and \cite{liggett-spitzer}, it
is not valid anymore here, hence we use the one introduced in \cite{GS}, 
which will moreover be helpful dealing later on with attractiveness
(the purpose for which it was built), see Section \ref{s-coupling} for details. \\

A probability measure $\bar{\mu}$ on $\N^{\X}$  is called  \emph{invariant}  
for the process with generator $\L$ and Markov semigroup $(S(t):t\geq 0)$ if 
\begin{subequations}
\begin{equation}
\bar{\mu} \text{ is supported on } \XX,   \text{ and }  \label{supp-on-X}
\end{equation}
\begin{equation}\label{def-mu-invariant}\int S(t) f \ d\bar{\mu} 
= \int f \ d\bar{\mu}  \qquad \text{ for every bounded } f\in
                                           \Lip \, \text{ and every } t\geq 0 .   
\end{equation}
\end{subequations} 
\begin{proposition}\label{intL=0}
Let $\bar{\mu}$ be a probability measure on $\N^{\X}$ 
satisfying 
\begin{equation}\label{int-norm-eta-fini}
\int \|\eta \| \ d\bar{\mu}(\eta) <\infty 
\end{equation} 
 Let us consider a process with 
generator $\L$ given by \eqref{generator} where rates satisfy \eqref{un-assum}.
Assume that the Markov semigroup $(S(t):t\geq 0)$ is such that statements \textit{(i)--(vii)} 
of Lemma \ref{lemma-spitzer} hold. 
Then \eqref{def-mu-invariant} is equivalent to 
\begin{equation}\label{suff-intL=0}
\int \L f \ d\bar{\mu} =0    \ \text{ for every bounded cylinder function}\  f \text{ on } \N^{\X}.
\end{equation}
\end{proposition}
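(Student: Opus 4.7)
The plan is to use Theorem \ref{ex-theorem}(2) in both directions, combined with a pointwise approximation of bounded Lipschitz functions by bounded cylinder functions. As a preliminary observation, every bounded cylinder function $f$ depending on the sites of a finite set $\Lambda$ lies in $\Lip$, with $L_f\leq 2\|f\|_\infty/\min_{x\in\Lambda}a_x$ (since $\eta\neq\zeta$ on $\Lambda$ forces $\|\eta-\zeta\|\geq\min_{x\in\Lambda}a_x$), and the bound (\ref{bound-to-generator}) together with (\ref{int-norm-eta-fini}) ensures $\L g\in L^1(\bar{\mu})$ for every bounded $g\in\Lip$. Hence both sides of the equivalence are well-defined.

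For the direction (\ref{def-mu-invariant})$\Rightarrow$(\ref{suff-intL=0}), fix a bounded cylinder function $f$ and write, by Theorem \ref{ex-theorem}(2),
\begin{equation*}
\frac{S(t)f(\eta)-f(\eta)}{t}=\frac{1}{t}\int_0^t \L S(s)f(\eta)\,ds.
\end{equation*}
Integrating against $\bar{\mu}$, the left-hand side vanishes by hypothesis. Theorem \ref{ex-theorem}(1) and (\ref{bound-to-generator}) yield $|\L S(s)f(\eta)|\leq L_fe^{cs}C(1+M+m_p)\|\eta\|$, which is jointly integrable on $[0,t]\times\XX$ thanks to (\ref{int-norm-eta-fini}). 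Fubini legitimizes the exchange of integrals, and letting $t\to 0^+$ dominated convergence delivers $\int\L f\,d\bar{\mu}=0$, using the right-continuity at $0$ of $s\mapsto\int\L S(s)f\,d\bar{\mu}$ which follows from the strong-continuity items collected in Lemma \ref{lemma-spitzer}.

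For the converse, the difficulty is that applying the semigroup identity to a bounded $f\in\Lip$ produces $\L S(s)f$, while $S(s)f$ is only Lipschitz, not a cylinder function. The key preparatory step is therefore to extend (\ref{suff-intL=0}) from bounded cylinder functions to all bounded $g\in\Lip$: approximate $g$ by a sequence $g_n$ of bounded cylinder functions (for instance by conditioning on the coordinates of an increasing family of finite windows $\Lambda_n\uparrow\X$, as provided by Lemma \ref{lemma-spitzer}) with $\sup_n\|g_n\|_\infty$ and $\sup_n L_{g_n}$ both finite, $g_n(\eta)\to g(\eta)$ and $\L g_n(\eta)\to\L g(\eta)$ for every $\eta\in\XX$; dominated convergence with majorant a constant multiple of $\|\eta\|\in L^1(\bar{\mu})$ transfers the identity $0=\int\L g_n\,d\bar{\mu}$ to the limit. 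Applying this extension with $g=S(s)f$, integrating Theorem \ref{ex-theorem}(2) against $\bar{\mu}$, and exchanging integrals via the same $L^1$ majorant produces
\begin{equation*}
\int S(t)f\,d\bar{\mu}-\int f\,d\bar{\mu}=\int_0^t\int\L S(s)f\,d\bar{\mu}\,ds=0.
\end{equation*}
The delicate part that I expect to be the main obstacle is the construction of the approximating cylinder sequence with uniformly controlled Lipschitz constants and the pointwise convergence of $\L g_n$ to $\L g$ (which involves an infinite sum over $x,y\in\X$); this is precisely the purpose for which properties (i)--(vii) of Lemma \ref{lemma-spitzer} are invoked in the hypothesis.
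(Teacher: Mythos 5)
Your proposal is correct and takes essentially the same route as the paper: the forward direction is the differentiate-at-zero argument that the paper delegates to the Liggett--Spitzer lemma via properties (i)--(vii), and the converse is the paper's scheme of extending the cylinder identity to bounded Lipschitz functions by dominated convergence (majorant a constant multiple of $\|\eta\|$, integrable by \eqref{int-norm-eta-fini}) and then applying it to $g=S(s)f$ inside the integrated semigroup identity \textit{(2)} of Theorem \ref{ex-theorem}. The approximation step you flag as the main obstacle is closed in the paper simply by taking $g_n(\eta)=g(\eta\!\upharpoonright_n)$ (coordinates outside $\X_n$ set to zero), which preserves the sup norm and the Lipschitz constant and gives $\L g_n\to\L g$ pointwise under the same majorant, so no genuinely new idea is needed there.
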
 
%
\subsection{Alternatives}\label{subsec-alter-cons}
%
For some models, Conditions \eqref{assum} and/or \eqref{un-assum} 
will not be valid, and/or the involved probability measures will not satisfy 
Condition \eqref{int-norm-eta-fini}.  
All these assumptions were sufficient for our construction, 
so for some examples they could be not necessary, and an alternative 
construction could work (this includes dynamics in finite
volume). Moreover some examples require a state space smaller than $ \XX$
and a different construction (see Section  \ref{s-examples}).\par\smallskip  

To deal with some of these cases, we will indicate how to adapt the proofs for invariant
measures results in Section \ref{s-invariant} for models
satisfying the two following assumptions: 
\begin{subequations}
\begin{itemize}
\item[$\blacklozenge$]
A probability measure $\bar{\mu}$ on $\N^{\X}$  is invariant  
for the process with generator $\L$ \\ given by \eqref{generator} and Markov semigroup $(S(t):t\geq 0)$ 
when \eqref{def-mu-invariant} is satisfied, \\ and \eqref{def-mu-invariant} is equivalent to  
\eqref{suff-intL=0}.
\\
\vspace{-7mm}
\begin{equation}
      \label{A1}
\end{equation}
\item[$\blacklozenge$]  
There exists a constant $C>0$ such that
\begin{equation}\label{assum-alternative}
\sum_{k\le\a} g_{\a,\b}^k \leq C < \infty.
\end{equation} 
\end{itemize} 
\end{subequations}
\smallskip
Note that assumption \eqref{assum-alternative}  implies  \eqref{un-assum}.
\par
\medskip
\subsection{Examples}\label{subsec-examples}
%
 All along this paper, we will illustrate our results on the following models,
which have only one conservation law.
They were all initially defined as single-jump models, that is with rates 
  $p(x,y)g^k_{\a,\b}$ such that
 $\gen^k_{\a,\b}=0$ for $k>1$. We
 analyse their generalizations to $k\ge 2$, which were either already defined or
 that we introduce in this work. \\

\noindent$\bullet$ 
In the \emph{misanthropes process} (MP), introduced by Cocozza in \cite{Cocozza}, 
\begin{equation}\label{misanth}
\gen^k_{\a,\b} \  = \ \1_{[k=1]} \gen^1_{\a,\b}
\end{equation}
for a nonnegative function $\gen^1_{\cdot,\cdot}$ on $\N\times\N$, non-decreasing 
(non-increasing) in its first (second) coordinate.   
In the works of Godr\`eche et al. \cite{godreche07,LG,LG07}, 
arbitrary (that is, without monotonicity properties) 
nonnegative $\gen^1_{\cdot,\cdot}$ on $\N\times\N$ are considered
in (\ref{misanth}) and the process is then called the \emph{dynamic urn model} 
or the \emph{migration process}. However, as per usual, we keep the denomination MP
also for those cases;  we denote by mass migration processes (MMP) 
the dynamics extended to multiple jumps.\\

\noindent$\bullet$
\emph{Zero range processes} (ZRP) are single-jump dynamics 
(that is, with rate (\ref{misanth})) introduced by Spitzer in \cite{spitzer}, for which
 the dependence on $\beta$ is dropped in $\gen^1_{\a,\b}$. 
 We denote by MM-ZRP their extension to multiple jumps, 
 which was introduced in \cite{GrL}. To simplify the notation for the rates,
 $\gka$ denotes a function of $k$
and $\a$ (the occupation number 
on the departure site of the jump) only: 
\begin{eqnarray} 
\gkab &=&  \1_{[k=1]} \gen^1_{\a} \quad  \text{ (ZRP) }\label{rate-zrp} \\
\gkab &=&  \gka               \qquad\quad  \text{ (MM-ZRP). }\label{rate-mm-zrp}
\end{eqnarray}
\noindent$\bullet$
 \emph{Target processes}  (TP) are single-jump dynamics 
(that is, with rate (\ref{misanth})) introduced in \cite{LG07},
for which the dependence on $\alpha$ is (almost) dropped in $\gen^1_{\a,\b}$: 
only $\alpha>0$ is required.
We define in this paper their generalization  to multiple jumps (MM-TP). 
To simplify the notation for the rates,  $\gkb$ denotes a function of $k$
and $\b$ (the occupation number on the arrival site of the jump) only: 
\begin{eqnarray} 
\gkab &=&  \1_{[k=1\leq\a]}\, g_{\ast,\b}^1 \quad\  \text{ (TP) }\label{rate-tp}\\
\gkab &=&  \1_{[k\leq \a]}\,\gkb    \qquad  \text{ (MM-TP). } \label{rate-mm-tp}
\end{eqnarray}
We will study in detail various examples of MM-ZRP and MM-TP in Section \ref{s-examples}.\\
%


\section{Product invariant measures}\label{s-invariant}
Throughout this section we consider a mass migration process (MMP) $(\eta_t)_{t\ge 0}$
on the set of sites  $\X=\Zd$, with generator \eqref{generator}, rates
satisfying assumption \eqref{un-assum},  which is translation invariant, that is with
$(p(x,y),x,y\in\X)$ a translation invariant transition probability 
(hence bistochastic, so that $m_p=1$ in \eqref{mpconst}).
Let us denote by $\mathcal{S}$ the set of translation-invariant probability measures on $\N^{\Z^d}$,
and by $\mathcal{I}$ the set of invariant probability measures for the MMP $(\eta_t)_{t\ge 0}$. 
We are interested in product, translation-invariant
and invariant probability measures for  $(\eta_t)_{t\ge 0}$.
Most proofs are done in Section \ref{s-proofs-sec3}.

\subsection{Necessary and sufficient conditions for product invariant measures}\label{subsec:cns-product}
In this subsection, we exhibit necessary and sufficient relations between the rates 
of the MMP $(\eta_t)_{t\ge 0}$ and the single site marginal $\mu$ of a \emph{product, translation-invariant} probability
measure $\bar{\mu}$ on $\N^{\Zd}$  which make the latter invariant for $(\eta_t)_{t\ge 0}$. 

\begin{theorem}\label{th:main2}
Consider a mass migration process with generator $\L$  given by \eqref{generator},
whose rates satisfy  \eqref{un-assum}.
Let $\bar{\mu}\in{\mathcal S}$ be a  product measure 
whose single site marginal $\mu$ has a finite first moment $\|\mu\|_1 = \sum_{n\in\N} n\mu(n)$.
Let us denote for all $\a,\b\geq 0$  
\begin{equation}\label{A-def2} 
\mathbf{A}(\a,\b) = \left\{
\begin{array}{ll}
 \sum\limits_{k\leq \b}   g^k_{\a+k,\b-k} \dfrac{\mu(\a+k)\mu(\b-k)}{\mu(\a)\mu(\b)}
  - \sum\limits_{k\leq \a} g^k_{\a,\b} & \ \text{if } \mu(\a)\mu(\b)\neq 0
\\[1mm]
 0 & \ \text{otherwise.}
 \end{array}
\right.
\end{equation}
A necessary condition for $\bar{\mu}$ to be invariant for the process is 
\begin{equation}\label{inv-sym21}
\sum_{k\le \b} g^k_{\a+k,\b-k}\mu(\a+k)\mu(\b-k)=0 \ 
\text{for all }\a,\b\geq 0\ \text{such that }\mu(\a)\mu(\b)= 0
\end{equation} 
combined with
\begin{equation}\label{inv-sym22}
\mathbf{A}(\a,\b) = -   \mathbf{A}(\b,\a)\ \text{for all }\a,\b\geq 0.
\end{equation} 
These two conditions are also sufficient when $p(\cdot,\cdot)$ is symmetric. When $p(\cdot,\cdot)$ is asymmetric,   
\eqref{inv-sym21} has to be combined with the following stronger condition 
to be necessary and sufficient: there exists a function $\psi$ on $\N$ such that 
\begin{equation}\label{inv-psi2}
\mathbf{A}(\a,\b) = \psi(\b) - \psi(\a) \quad
\text{for all } \a,\b\geq 0 \quad
\text{such that }\mu(\a)\mu(\b)\neq 0. 
\end{equation} 
\end{theorem}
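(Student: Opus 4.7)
The plan is to apply Proposition \ref{intL=0}. Since $\bar\mu$ is a product measure with marginal $\mu$ of finite first moment, $\int \|\eta\|\,d\bar\mu = \|\mu\|_1\sum_x a_x < \infty$ by \eqref{sum-a_x-finite}, so the invariance of $\bar\mu$ reduces to $\int \L f\,d\bar\mu = 0$ for every bounded cylinder $f$. Because each jump modifies at most two coordinates and $\bar\mu$ is product, it is enough to check this identity on two-site products $f(\eta) = \phi(\eta(x_0))\,\chi(\eta(y_0))$ with $x_0 \neq y_0$: a longer cylinder $\prod_{z\in\Lambda}\phi_z(\eta(z))$ integrates, upon marginalising the coordinates untouched by any given jump, to a sum of such two-site contributions, and indicator cylinders span the bounded cylinder algebra.

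I would then compute $\int \L f\,d\bar\mu$ by splitting the double sum in \eqref{generator} according to whether the ordered pair $(x,y)$ has two, one, or no endpoints in $\{x_0,y_0\}$. The "no endpoint" case vanishes; the "one endpoint" contributions factorise into a prefactor involving translation-invariant row sums of $p$ (like $\sum_{z\ne x_0, y_0} p(x_0, z) = 1 - p(0,0) - p(0, y_0-x_0)$) times single-site bilinear forms $T_\pm[\phi] = \sum_{\alpha,\gamma,k}\mu(\alpha)\mu(\gamma) g^k_{\alpha,\gamma}[\phi(\alpha\mp k) - \phi(\alpha)]$ (and analogously for $\chi$). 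Specialising to $\chi\equiv 1$ gives the one-site identity $T_+[\phi] + T_-[\phi] = 0$, which lets me eliminate $T_-$. After collecting terms, I expect
\begin{equation}\label{keyeq2}
\int \L f\, d\bar\mu = [p(y_0,x_0) - p(x_0,y_0)]\bigl\{I(\chi) T_+[\phi] - I(\phi) T_+[\chi]\bigr\} + p(x_0,y_0)\, U(\phi,\chi) + p(y_0,x_0)\, U(\chi,\phi),
\end{equation}
with $I(\phi) = \int \phi\,d\mu$ and $U(\phi,\chi) = \sum_{\alpha,\beta,k}\mu(\alpha)\mu(\beta) g^k_{\alpha,\beta}[\phi(\alpha-k)\chi(\beta+k) - \phi(\alpha)\chi(\beta)]$. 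A shift $(\alpha,\beta)\mapsto(\alpha+k,\beta-k)$ in the first piece of $U$ identifies the coefficient of $\phi(\alpha)\chi(\beta)$ with $\sum_{k\le\beta}g^k_{\alpha+k,\beta-k}\mu(\alpha+k)\mu(\beta-k) - \sum_{k\le\alpha}g^k_{\alpha,\beta}\mu(\alpha)\mu(\beta)$, which equals $\mu(\alpha)\mu(\beta)\mathbf{A}(\alpha,\beta)$ whenever $\mu(\alpha)\mu(\beta)>0$.

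To extract the invariance conditions I would apply a symmetry trick: the analogous identity for $(\chi,\phi)$ combined with \eqref{keyeq2} decouples nicely via sum and difference. The sum reads $[p(x_0,y_0)+p(y_0,x_0)]\,[U(\phi,\chi) + U(\chi,\phi)] = 0$; varying $(x_0,y_0)$ through the (irreducible) support of $p$ forces $U(\phi,\chi) + U(\chi,\phi) = 0$ as a bilinear form, which termwise is exactly \eqref{inv-sym22} on $\{\mu(\alpha)\mu(\beta)>0\}$ and, off that set, since both summands are non-negative, each must vanish, yielding \eqref{inv-sym21}. When $p$ is symmetric the first bracket of \eqref{keyeq2} is zero, no further condition appears, and sufficiency is obtained by reading the computation in reverse. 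When $p$ has an asymmetric edge, the difference of the two identities produces $U(\phi,\chi) - U(\chi,\phi) = 2\{I(\chi)T_+[\phi] - I(\phi)T_+[\chi]\}$, whose termwise reading on $\{\mu(\alpha)\mu(\beta)>0\}$ is $\mathbf{A}(\alpha,\beta) = c_\alpha/\mu(\alpha) - c_\beta/\mu(\beta)$, with $c_\alpha$ the coefficient of $\phi(\alpha)$ in $T_+[\phi]$; this is \eqref{inv-psi2} with $\psi(\alpha) := -c_\alpha/\mu(\alpha)$. Conversely, if $\mathbf{A}(\alpha,\beta) = \psi(\beta) - \psi(\alpha)$ and \eqref{inv-sym21} hold, summing the relation against $\mu(\beta)$ and discarding vanishing-marginal contributions via \eqref{inv-sym21} gives $c_\alpha/\mu(\alpha) = I(\psi) - \psi(\alpha)$ on the support of $\mu$, which is precisely what makes \eqref{keyeq2} vanish for every $(x_0,y_0)$, so sufficiency follows.

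The main technical difficulty is the careful handling of the "boundary" $\{\mu(\alpha)\mu(\beta) = 0\}$ on which $\mathbf{A}$ is set to zero. In the necessary direction it is the non-negativity of both $g^k_{\cdot,\cdot}$ and $\mu$ which permits splitting a vanishing non-negative sum into two independently vanishing summands, yielding \eqref{inv-sym21}; in the sufficient direction, \eqref{inv-sym21} is exactly what is needed both to make the coefficients of $U(\phi,\chi)$ cancel on that boundary and, in the asymmetric case, to legitimise the identification $c_\alpha = \mu(\alpha)[I(\psi) - \psi(\alpha)]$ that closes the argument.
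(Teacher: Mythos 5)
Your overall strategy is sound in the necessity direction and genuinely different in organisation from the paper's proof: you compute $\int \L f\,d\bar\mu$ directly in infinite volume for product cylinders, package everything into the bilinear forms $U$, $T_{\mathrm{out}}$, $T_{\mathrm{in}}$, and extract \eqref{inv-sym21}, \eqref{inv-sym22} and \eqref{inv-psi2} from indicator test functions and a swap-symmetrisation in $(\phi,\chi)$, whereas the paper works with the finite-volume generators $\L_n$, rewrites the two-site integral through $\mathbf{A}(\eta(x),\eta(y))$, and takes $n\to\infty$. Your derivation of \eqref{inv-sym21} from non-negativity, of \eqref{inv-sym22} from the symmetric part, and of $\psi$ from the antisymmetric part when $p(x_0,y_0)\neq p(y_0,x_0)$ matches the content of the paper's Steps 2--4 (the Fubini interchanges you use implicitly are justified by \eqref{un-assum} and the finite first moment, as in the paper's Lemma \ref{lem:properties-Amu-muA}).

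The genuine gap is in the sufficiency direction, at the very first reduction: it is \emph{not} enough to check $\int\L f\,d\bar\mu=0$ on two-site products. For a product cylinder supported on $V_f$ with $|V_f|\ge 3$, the one-endpoint contributions carry prefactors $\sum_{y\notin V_f}p(x,y)=1-\sum_{y\in V_f}p(x,y)$ and $\sum_{y\notin V_f}p(y,x)$, which differ from the prefactors $1-p(0,0)-p(x_0,y_0)$, $1-p(0,0)-p(y_0,x_0)$ occurring in your two-site identity; hence $\int\L f\,d\bar\mu$ for such $f$ is not a linear combination of two-site quantities, and ``marginalising to a sum of two-site contributions'' does not establish invariance from the vanishing of your identity over all pairs $(x_0,y_0)$. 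The fix is within reach of your own machinery but must be carried out: under \eqref{inv-sym21} and \eqref{inv-psi2} (resp.\ \eqref{inv-sym22}) one has, writing $\langle\phi\psi\rangle_\mu=\sum_\a\phi(\a)\psi(\a)\mu(\a)$, that $U(\phi,\chi)=I(\phi)\langle\chi\psi\rangle_\mu-I(\chi)\langle\phi\psi\rangle_\mu$, $T_{\mathrm{out}}[\phi]=I(\phi)I(\psi)-\langle\phi\psi\rangle_\mu$ and $T_{\mathrm{in}}=-T_{\mathrm{out}}$ (resp.\ $U(\phi,\chi)=-U(\chi,\phi)$ and $T_{\mathrm{out}}+T_{\mathrm{in}}=0$), and then the full multi-site expression vanishes only after invoking bistochasticity of $p$ in the form $\sum_{y\notin V_f}\bigl(p(x,y)-p(y,x)\bigr)=\sum_{y\in V_f}\bigl(p(y,x)-p(x,y)\bigr)$ for each $x\in V_f$ --- this is exactly the role translation invariance plays in the paper's Step 5, and it is where the asymmetric case genuinely uses the product structure over \emph{all} of $V_f$. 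Two further points need a word: in the sufficiency direction your key two-site identity already presupposes the one-site relation $T_{\mathrm{out}}+T_{\mathrm{in}}=0$, which there must be rederived from the hypotheses (it follows by summing \eqref{inv-sym22}, resp.\ \eqref{inv-psi2}, against $\mu$ and using \eqref{inv-sym21} off the support of $\mu$); and the sums $\langle\phi\psi\rangle_\mu$ require $\sum_\b|\psi(\b)|\mu(\b)<\infty$, i.e.\ the paper's \eqref{31bis}, while the passage from finitely-valued product cylinders to arbitrary bounded cylinder functions needs a dominated-convergence step rather than a purely algebraic ``spanning'' argument.
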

\medskip

 Theorem \ref{th:main2}  includes dynamics 
with state space $\{0,\cdots,\g\}^{\Zd}$, for some $\g>0$, as well as
 measures $\bar{\mu}$ such that $\mu(\a)=0$ 
for various value(s) of $\a$. We will deal with such cases in
a forthcoming paper, and we will  restrict ourselves in the present paper
to  measures such that  $\mu(\a)>0$ for all $\a\in\N$. In this case definition 
\eqref{A-def2} reduces to its first line, and condition \eqref{inv-sym21}
is absent.
 \begin{remark}\label{Aaa}
(a) Conditions \eqref{inv-sym22} or \eqref{inv-psi2} imply that $\mathbf{A}(\a,\a) =0$ for all $\a\ge 0$.\par
\noindent (b)  Condition \eqref{inv-psi2} combined with $\psi(0)=0$ implies 
\begin{equation}\label{inv-psi-bis}
\psi(\a)=- \mathbf{A}(\a,0)=\mathbf{A}(0,\a)\quad
\text{for all } \a\geq 0. 
\end{equation}
\end{remark} 
\medskip

It is also possible to deal with a product measure $\bar{\mu}$
whose single site
marginal $\mu$ has an infinite first moment, either by taking an alternative
norm to the one defined in \eqref{norm-eta} in the construction of the process, 
or if assumptions (\ref{A1})--(\ref{assum-alternative}) are valid.
\begin{remark}\label{rk:alternative-ax}
If $\sum_{n\ge 0} n \mu(n) =+\infty$ but there exists a non-decreasing function 
$f:[0,+\infty)\to[0,+\infty)$, with $f(0)=0$ and increasing to infinity when $n\rightarrow\infty$ in
such a way that $\sum_{n\ge 0} f(n) \mu(n) <\infty$, then it is possible to replace the coefficients $a_x$ 
in the norm $\|\cdot\|$ defined in \eqref{norm-eta} by new coefficients
$a^*_x$ for which a new norm $\|\cdot\|^*$ defined by 
$\|\eta\|^*=\sum_{x\in\X} \eta(x)a^*_x$ satisfies $\bar{\mu}(\|\eta\|^*<\infty)=1$.
\end{remark}
\smallskip
\begin{corollary}\label{cor:main-alternative}
Under assumptions (\ref{A1})--(\ref{assum-alternative}), Theorem \ref{th:main2} is valid
without assuming that
$\mu$ has a finite first moment.
\end{corollary}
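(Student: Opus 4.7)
The strategy is to trace through the proof of Theorem \ref{th:main2} and replace each use of the finite first-moment hypothesis on $\mu$ by an appeal to the two bullet-point assumptions \eqref{A1}--\eqref{assum-alternative}. Recall that in the setting of Theorem \ref{th:main2} the condition $\|\mu\|_1<\infty$ enters only through \eqref{int-norm-eta-fini}, which is the moment bound that makes Proposition \ref{intL=0} applicable and thereby identifies invariance of $\bar{\mu}$ with the generator equation $\int \L f\,d\bar{\mu}=0$ on bounded cylinder functions. Under the alternative \eqref{A1}, this equivalence is postulated directly, so the moment assumption on $\mu$ becomes superfluous at this step.

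Second, I need to verify that all the manipulations performed inside $\int \L f\,d\bar{\mu}=0$ that lead to \eqref{inv-sym21}--\eqref{inv-sym22} (respectively \eqref{inv-psi2}) remain legal without moment assumptions. Here \eqref{assum-alternative} is decisive: for any bounded cylinder function $f$ supported on a finite set $\Lambda\subset\Zd$, the difference $|f(\etb)-f(\eta)|$ is dominated by $2\|f\|_\infty$ and is non-zero only when $\{x,y\}\cap\Lambda\neq\emptyset$. Combining this with $\sum_{k\le\a}g^k_{\a,\b}\le C$ from \eqref{assum-alternative}, with $\sum_y p(x,y)=1$, and with $\sup_y\sum_x p(x,y)=m_p<\infty$ from \eqref{mpconst}, one obtains a pointwise bound $|\L f(\eta)|\le K\,\|f\|_\infty$ with $K$ depending only on $|\Lambda|$, $C$ and $m_p$ but not on $\eta$. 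Consequently $\L f$ is bounded, $\int \L f\,d\bar{\mu}$ is well defined without any integrability hypothesis on $\mu$, and Fubini--Tonelli justifies the interchange of $\bar{\mu}$-integration with the summation over $(x,y,k)$ needed to bring the product structure of $\bar{\mu}$ to bear.

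Third, I would use the same family of bounded cylinder test functions $f$ (typically indicators of events fixing the occupation numbers at two sites $x,y$) that are used in the proof of Theorem \ref{th:main2}. After performing the product-measure factorization site by site and relabelling in the gain term $(x,y,k)\mapsto(\a+k,\b-k)$, one extracts exactly the identities on $\mathbf{A}(\a,\b)$ recorded in \eqref{inv-sym21}--\eqref{inv-sym22} in the symmetric case and in \eqref{inv-psi2} in the asymmetric case. Because Theorem \ref{th:main2}'s derivation already worked with bounded cylinder functions once \eqref{suff-intL=0} was invoked, the algebra is word-for-word identical; no auxiliary convergence issue requiring $\|\mu\|_1<\infty$ intervenes.

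The main obstacle I anticipate is purely bookkeeping: I have to make sure that, in the asymmetric case, the step that produces the potential $\psi$ in \eqref{inv-psi2} does not implicitly rely on finiteness of $\sum_\a \a\mu(\a)$. Since that argument proceeds pair by pair on values $(\a,\b)$ with $\mu(\a)\mu(\b)>0$ and appeals only to the uniform rate bound \eqref{assum-alternative} to legitimize term-by-term interchanges, it carries over unchanged, and Corollary \ref{cor:main-alternative} follows.
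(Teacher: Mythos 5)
Your proposal is correct and follows essentially the same route as the paper: assumption \eqref{A1} stands in for Proposition \ref{intL=0}, and the uniform rate bound \eqref{assum-alternative} replaces every estimate that previously used $\|\mu\|_1$ (the analogue of \eqref{eq:2sommes-cv}, the summability of $(\mathbf{A}\mu)$, $(\mu\mathbf{A})$ and of $\psi$ from Lemma \ref{lem:properties-Amu-muA}, and the bound on the generator), after which Steps 2--5 go through verbatim. The only cosmetic difference is that the paper re-justifies the approximation $\int \L f\,d\bar{\mu}=\lim_n\int \L_n f\,d\bar{\mu}$ by adapting the construction lemmas with the new uniform bound $|\L_n f(\eta)|\le 2m_f C|V_f|$, whereas you bound $\L f$ uniformly in $\eta$ and invoke Fubini directly on the infinite-volume generator; both rest on exactly the same estimate.
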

{}From Theorem \ref{th:main2}  and Corollary \ref{cor:main-alternative}, 
 we deduce that if the product measure $\bar{\mu}$ is invariant for the MMP, 
the latter possesses  a one-parameter family of invariant product 
probability measures:

\smallskip
\begin{corollary}\label{family} 
Consider a MMP with generator $\L$ given by \eqref{generator}.
Let $\bar{\mu}\in{\mathcal S}$ be a  product measure invariant for the process. Assume that either 
the rates of the MMP  satisfy assumption \eqref{un-assum} and the
single site marginal $\mu$ of $\bar{\mu}$ has a finite first moment, 
or that the rates of the MMP  satisfy assumptions (\ref{A1})--(\ref{assum-alternative}). 
Let $\varphi_c\ge 1$ be the radius of convergence of the series
\begin{equation}\label{Z}
Z_\vfi= \sum_{n=0}^\infty \vfi^n \mu(n).
\end{equation}
Then for all $\varphi < \varphi_c$,  the translation invariant
product measure $\bar{\mu}_\varphi$ defined by its single site marginal 
\begin{equation}\label{muf}
\mu_\vfi (n) =  \bar{\mu}_\vfi (\eta(x)=n) =\frac{1}{Z_\vfi} \vfi^n \mu(n),\qquad n\in\N, 
\end{equation}
is invariant for the process. 
Moreover, 
\begin{equation}
      \label{eitherif}
\text{either if }\sum_{n\ge 0} n \vfi_c^n {\mu}(n) <\infty,\ 
\text{or if }Z_{\varphi_c}<\infty  \text{ and assumptions (\ref{A1})--(\ref{assum-alternative}) are satisfied,}  
\end{equation} 
then the 
measure $\bar{\mu}_{\varphi_c}$ is also invariant for the process. 

Therefore the latter possesses a one-parameter family of invariant product 
probability measures, either $\{\mp:\vfi\in {\rm Rad}(Z)\} $ or
 $\{\mp:\vfi\in {\rm Rad}(Z')\} $, where ${\rm Rad}(Z')  
\subseteq {\rm Rad}(Z) $ are defined by
\begin{equation}\label{Rad}
{\rm Rad}(Z) = \left\{ 
\begin{array}{ll} 
{} (0,{\varphi_c}] & \text{if } Z_{\varphi_c}<\infty\\[2mm]
{} (0,{\varphi_c}) & \text{if } Z_{\varphi_c}=+\infty
\end{array}
\right.
\end{equation} 
and 
\begin{equation}\label{Rad'}
{\rm Rad}(Z') =  \left\{ 
\begin{array}{ll} 
{} (0,{\varphi_c}] & \text{if } \sum_{n\ge 0} n \vfi_c^n {\mu}(n) <\infty \\[2mm]
{} (0,{\varphi_c}) & \text{otherwise. }
\end{array}
\right.
\end{equation}
\end{corollary}
For further use, when $Z_{\varphi_c}<\infty$ we define  the \textit{critical density} by
\begin{equation}\label{eq:rho_c}
\rho_c= Z_{\varphi_c}^{-1} \sum_{n\ge 0} n \vfi_c^n {\mu}(n).
\end{equation}
%

\subsection{Applications}\label{subsec:Applications}
\makebox{} \\
In this subsection, we use Theorem  
\ref{th:main2},  Corollaries \ref{cor:main-alternative} and \ref{family}
 to characterize invariant product  probability measures 
for the various examples introduced in Section \ref{subsec-examples}. 
The necessary and sufficient conditions obtained in Theorem  \ref{th:main2} will be exploited in two ways: 
1) the rates being given, check that a measure $\bar{\mu}$ is invariant;
2)~a measure $\bar{\mu}$ being given, define rates for which $\bar{\mu}$ is invariant.

This second point of view will enable us in Section \ref{s-discussion}  
to associate many different dynamics to a measure $\bar{\mu}$
satisfying condensation properties. 
\medskip
\subsubsection{Single-jump models}\label{subsubsec:single-jump} 
\mbox{}\\

\noindent
 The single-jump models we consider are misanthropes processes (MP), for which
the generic rate is  $p(x,y)g^k_{\a,\b}$ with $g^k_{\a,\b}$ given by (\ref{misanth}). 
In this case  we derive  the following proposition from Theorem \ref{th:main2}.
\begin{proposition}\label{prop:compcondMP}
Consider a MP
 with rates  $p(x,y)g^1_{\a,\b}$. Assume that
 the  rates
$(g^1_{1,\a})_{\a\ge 0}$ and $(g^1_{\a,0})_{\a\ge 1}$ are positive. 
Let $\bar{\mu}\in\mathcal{S}$ be a product probability measure 
whose single site marginal $\mu$  satisfies $\mu(\a)>0$ for all $\a\in\N$,  
and such that either $\mu$ has a finite first moment
or assumptions \eqref{A1}--\eqref{assum-alternative} are verified.  
\\
Then $\bar{\mu}$ is invariant for the MP  if and only if 
its single site marginal $\mu$ is given by 
\begin{equation}\label{k=1:mug2}
\mu(\a)= \mu(0) \prod_{k=1}^\a  \Big[\; 
\frac{\mu(1)}{\mu(0)} \frac{g^1_{1,k-1} }{g^1_{k,0}} \Big]\quad  \text{ for all } \a\ge 1;
\end{equation}
and provided that the following compatibility conditions are fulfilled, respectively
\begin{equation}\label{k=1-compat}
g^1_{\a+1,\b}=  \frac{g^1_{\a+1,0}}{g^1_{1,\a} }  \frac{g^1_{1,\b}}{g^1_{\b+1,0}} 
g^1_{\b+1,\a}\text{ for all } \a, \b\ge 0,
\end{equation}
in the symmetric case, 
and  \eqref{k=1-compat} together with
\begin{equation}\label{k=1:inv-psi}
g^1_{\b,\a}  = g^1_{\a,\b} + g^1_{\b,0} -g^1_{\a,0} \quad \text{ for all } \a, \b \ge 0
\end{equation}
in the asymmetric case.
\end{proposition}  
\begin{proof}
For MP, definition \eqref{A-def2} becomes:
\begin{eqnarray}\label{k=1:A-def}
\mathbf{A}(\a,\b) &=& \frac{\mu(\a+1)}{\mu(\a)} \frac{\mu(\b-1)}{\mu(\b)}g_{\a+1,\b-1}^1 
 - g_{\a,\b}^1 \qquad \text{for all } \a\geq 0,\b\geq 1\\
\mathbf{A}(\a,0) &=& -g_{\a,0}^1 \makebox[5.9cm]{} \text{for all } \a\geq 0. \label{k=1:A-def0}
\end{eqnarray}
Here, condition \eqref{inv-sym22} of Theorem \ref{th:main2} reduces to  well known
\textit{detailed balance conditions}, also called 
\textit{pairwise balance conditions} in the asymmetric case \cite{LG, SRB}.
Indeed, using \eqref{k=1:A-def}, condition \eqref{inv-sym22} for $\a\ge 1$, $\b\ge 1$ can be written
as:
\begin{equation}\label{k=1:DB1}
g^1_{\a+1,\b-1} \mu(\a+1)\mu(\b-1) - g^1_{\b,\a} \mu(\a)\mu(\b) = 
g^1_{\a,\b} \mu(\a)\mu(\b) - g^1_{\b+1,\a-1} \mu(\a-1)\mu(\b+1)
\end{equation}
where both sides of the equality have the same form and differ only by a shift $(\a,\b)\rightarrow (\a-1,\b+1)$.
For $\a=0$, $\b\ge 1$, using  \eqref{k=1:A-def},  \eqref{k=1:A-def0}, condition  \eqref{inv-sym22} reads
\begin{equation}\label{k=1:DB2}
g^1_{1,\b-1} \mu(1)\mu(\b-1) - g^1_{\b,0} \mu(0)\mu(\b) = 0.
\end{equation}
Iterating \eqref{k=1:DB1} then using \eqref{k=1:DB2}, one gets for $\a\ge 0$, $\b\ge 1$
\begin{eqnarray}\label{k=1:DB3}
g^1_{\a+1,\b-1} \mu(\a+1)\mu(\b-1) - g^1_{\b,\a} \mu(\a)\mu(\b) &= &
g^1_{1,\a+\b-1} \mu(1)\mu(\a+\b-1) - g^1_{\a+\b,0} \mu(\a+\b)\mu(0)\nonumber\\
&=& 0.
\end{eqnarray}
Equation \eqref{k=1:DB3} expresses detailed balance condition, which can be written 
in a more symmetric form as (cf. \cite{LG}):
\begin{equation}\label{k=1:DB}
g^1_{\a+1,\b} \mu(\a+1)\mu(\b)= g^1_{\b+1,\a} \mu(\b+1)\mu(\a)\quad \text{ for all } \a, \b \ge 0.
\end{equation}
On one hand,  \eqref{k=1:DB} for $\b=0$ gives a relation between the invariant 
product measure and the subsets of rates
$(g^1_{1,\a})_{\a\ge 0}$ and $(g^1_{\a,0})_{\a\ge 1}$. One has 
\begin{equation}\label{k=1:mug}
 \frac{g^1_{\a+1,0} }{g^1_{1,\a} }= \frac{\mu(1)}{\mu(0)} \; 
 \frac{\mu(\a)}{\mu(\a+1)}\text{ for all } \a \ge 0.
\end{equation}
\begin{remark}\label{consistently}

Equations \eqref{k=1:mug} allows to derive the measure $\mu$, given the rates, up to the ratio  $ {\mu(1)}/{\mu(0)}$.
This is consistent with  Corollary \ref{family} which defines a family of invariant product measures from a given one.
\end{remark} 
In other words, the existence of a product invariant measure for the 
dynamics implies the existence of a 
family of such measures. Indeed, from \eqref{k=1:mug}, one gets  \eqref{k=1:mug2}. 

On the other hand, inserting expression \eqref{k=1:mug} in \eqref{k=1:DB} 
leads to an expression of  detailed balance condition 
in terms of the jump rates only, that is  \eqref{k=1-compat}. 
Condition  \eqref{k=1-compat} has to be supplemented by 
condition \eqref{inv-psi2} in the asymmetric case: 
the latter, after using Remark~\ref{Aaa}(b) and \eqref{k=1:A-def0}, \eqref{k=1:DB3},
writes \eqref{k=1:inv-psi}.  
\end{proof}
\medskip
\noindent 
In \cite{Cocozza}, conditions \eqref{k=1-compat}, \eqref{k=1:inv-psi} 
(denoted there by (2.3), (2.4)) were proved to be 
sufficient to get product invariant measures, satisfying \eqref{k=1:mug}
(denoted there by (2.6)).  \\  

\noindent
We now apply  Proposition \ref{prop:compcondMP} to ZRP and TP, that is, we check what
become the compatibility conditions \eqref{k=1-compat}, \eqref{k=1:inv-psi},
and the form of the marginal \eqref{k=1:mug2} of a product invariant measure,
when it exists. We also explicit Corollary \ref{family} in these cases. 
\\ \\
%
\noindent $\bullet$ \textit{Zero range process (ZRP).} 
 
\smallskip
\noindent
Here, 
both conditions \eqref{k=1-compat} and  \eqref{k=1:inv-psi} are always  satisfied, so that there are no compatibility conditions
on the rates. \par 
\smallskip

\noindent
Equation \eqref{k=1:mug} expresses that for a product invariant  measure 
with  marginal $\mu$, 
 there is a constant $\varphi_\mu >0$
such that for all $\a\ge 1$,
\begin{equation}\label{zrp-mu-invt}
g_\a^1 = \varphi_\mu  \frac{\mu(\a-1)}{\mu(\a)}
\end{equation}
Consistently with  Remark \ref{consistently}, a convenient expression for $\varphi_\mu$ is thus
\begin{equation}\label{fimu} 
\varphi_\mu =g_1^1 \frac{\mu(1)}{\mu(0)}>0
\end{equation}

Let $\varphi_c\ge 1$ be the radius of convergence of the series  \eqref{Z} which writes here 
\begin{equation}\label{inv-zrp-Z}
Z_\varphi= \mu(0)  \sum_{k=0}^\infty\frac{{(\varphi\varphi_\mu)}^k}{(g_k^1)!}
\end{equation}
where
\begin{equation}\label{eq:factoriel-g}
(g_k^1)! = 
\begin{cases}
1 & \text{ if } k =0\cr
g_k^1 g_{k-1}^1\cdots g_1^1 & \text{ if } k > 0.
\end{cases}
\end{equation}
By Corollary \ref{family},
there exists a family of product, translation-invariant, 
invariant measures, $\{\bar{\mu}_{\varphi}, 0<\varphi<\varphi_c\}$  or
$\{\bar{\mu}_{\varphi}, 0<\varphi\le\varphi_c\}$ if \eqref{eitherif} is satisfied, 
with single site marginal (cf. \eqref{k=1:mug2}):
\begin{equation}\label{inv-zrp}
\mu_{\varphi}(\a)=\frac{\mu(0)}{Z_\varphi}\frac{(\varphi\varphi_\mu)^{\a} }
{(g_\a^1)!}\, \text{ for } \a\geq 0.
\end{equation}
\begin{remark}\label{zrp-k=1-s=as}
Instead of considering equation \eqref{k=1:inv-psi}, one can 
insert Equation \eqref{zrp-mu-invt} in \eqref{k=1:A-def}, \eqref{k=1:A-def0} 
and get the following expression for  $A(\a,\b)$,
\begin{equation}
A(\a,\b) =  g_\b^1 - g_\a^1 \quad\text{ for all } \a, \b \ge 0
\end{equation}
which is directly the form of condition \eqref{inv-psi2} with $\psi(\a)= g_\a^1$. 
This implies that, for  the zero range process, 
conditions for the existence of  product invariant measures are the same  
for both  $p(\cdot,\cdot)$ symmetric and  $p(\cdot,\cdot)$ asymmetric.
\end{remark}
\noindent
The invariant measures derived here are those studied by \cite{spitzer,andjel}.\\ 

%
\noindent $\bullet$ \textit{Target process (TP).} \par
%
\noindent
For a target process, conditions \eqref{k=1-compat} are always  
satisfied, hence there are no compatibility conditions on the rates in the symmetric case.
Condition  \eqref{k=1:inv-psi} becomes here
\begin{equation}\label{compat-tp-asym}
 g_{*,\b}^1= g_{*,1}^1\qquad\mbox{ for  }\,\b>0.  
\end{equation}
Thus, only under condition \eqref{compat-tp-asym} does an asymmetric target process
 admit  invariant product probability
measures.\\
\noindent
Equation \eqref{k=1:mug} expresses that for a given product invariant  measure 
with  marginal $\mu$, 
 there is a constant 
\begin{equation}\label{tildefimu} 
\tilde\varphi_\mu = \frac{1}{g_{*,0}^1} \frac{\mu(1)}{\mu(0)}>0
\end{equation}
such that for all $\b\ge 0$,
\begin{equation}\label{tp-mu-invt}
g_{*,\b}^1  \tilde \varphi_\mu= \frac{\mu(\b+1)}{\mu(\b)}.
\end{equation}
Let $\varphi_c\ge 1$ be the radius of convergence of the series
\begin{equation}\label{inv-tp-Z}
 Z_\varphi= \mu(0) \Bigl( 1 +  \sum_{k=1}^\infty(\varphi\tilde \varphi_\mu)^k\; 
  g_{*,k-1}^1\,  g_{*,k-2}^1  \cdots  g_{*,0}^1\Bigr).
\end{equation}
Then, by Corollary \ref{family},
there exists a one-parameter family of product, translation-invariant, 
 invariant probability measures, $\{\bar{\mu}_{\varphi}, 0<\varphi<\varphi_c\}$
  or $\{\bar{\mu}_{\varphi}, 0<\varphi\le\varphi_c\}$ if \eqref{eitherif} is satisfied,  
 with single site marginal given by (cf. \eqref{k=1:mug2}):
\begin{equation}\label{inv-tp}
\mu_{\varphi}(0)=\frac{\mu(0)}{Z_\varphi}, \qquad \mu_{\varphi}(\a)=\frac{\mu(0)}{Z_\varphi}
(\varphi\tilde \varphi_\mu)^\a\;  
g_{*,\a-1}^1\,  g_{*,\a-2}^1  \cdots g_{*,0}^1 \, \text{ for } \a\geq 1.
\end{equation}
Then, only under condition \eqref{compat-tp-asym}, an asymmetric target process
 admits a family of invariant product probability
measures,  
for which the expression \eqref{inv-tp}  for the single site marginal
depends on the two distinct jump rates, and becomes: 
\begin{equation}\label{asym-tp-inv}  
\mu_\varphi(\a) 
=\mu_\varphi(0) (\varphi\tilde \varphi_\mu)^{\a}
  g_{*,0}^1 (  g_{*,1}^1)^{\a-1}
\quad\text{for all } \a\ge 1,\quad
\mu_\varphi(0) = \frac{(1 - \varphi \tilde \varphi_\mu)  g_{*,1}^1}
{g_{*,1}^1 + \varphi\tilde \varphi_\mu  ( g_{*,0}^1 -   g_{*,1}^1)}.
\end{equation} 
\begin{remark}\label{rk:duality-zrp-tp}
Comparing \eqref{inv-tp}
with the analogous formula \eqref{inv-zrp} for zero range process, we see that 
there is a family of invariant product measures,  common to both a 
zero range process  and a symmetric target process  
if and only if there exists a constant $c_0 >0$ such that 
\begin{equation}\label{duality-zrp-tp} 
  g_{*,\a}^1 = \frac{c_0^2}{g_{\a+1}^1} 
 \end{equation}
holds for all $ \a\geq 0$.  In \cite{LG07}, 
\eqref{duality-zrp-tp} is called a \textit{duality relation} between ZRP and TP.
\end{remark}

The invariant measures derived here were studied in \cite{LG07}. \\ 

\subsubsection{Mass migration zero range process  (MM-ZRP). } 
\makebox{} \\
 In this subsection and the following one, we will see that the possibility
for $k$ to be larger than 1 yields a very different behavior than for single-jump models. 
\begin{proposition}\label{gzrp}
Let $\bar{\mu}\in\mathcal{S}$ be a product measure 
whose single site marginal $\mu$  satisfies $\mu(0)>0$. 
Consider a MM-ZRP
 with rates $p(x,y)g^k_{\a}$ for an asymmetric $p(\cdot,\cdot)$ and $g^k_{\a}$.
  Assume that either the rates satisfy  \eqref{un-assum} and 
 $\mu$ has a finite first moment, or that assumptions \eqref{A1}--\eqref{assum-alternative}
 are satisfied. Then $\bar{\mu}$ is invariant for the MM-ZRP  if and only if 
\begin{equation}\label{inv-s}
g^k_{\a+k}\mu(\a+k) = \mu(\a)\frac{\mu(k)}{\mu(0)}g^k_k        
\qquad \text{for all } k\geq 1,\a\geq 1 .
\end{equation} 
If $p(\cdot,\cdot)$ is symmetric then condition \eqref{inv-s} is only 
sufficient for $\bar{\mu}$ to be invariant.
\end{proposition}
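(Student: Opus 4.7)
The plan is to apply Theorem~\ref{th:main2} to the MM-ZRP case. Specializing to rates $g^k_{\a,\b}=g^k_\a$ (and using the standing assumption $\mu(\a)>0$ for all $\a$, which makes \eqref{inv-sym21} vacuous), the function $\mathbf{A}$ from \eqref{A-def2} reads
\begin{equation*}
\mathbf{A}(\a,\b) = \sum_{k=1}^{\b} g^k_{\a+k}\frac{\mu(\a+k)\mu(\b-k)}{\mu(\a)\mu(\b)} - \sum_{k=1}^{\a} g^k_\a.
\end{equation*}
For the sufficiency direction (valid for both symmetric and asymmetric $p$), I would rewrite \eqref{inv-s} as $g^k_{\a+k}\mu(\a+k) = c_k\mu(\a)$ with $c_k:=g^k_k\mu(k)/\mu(0)$, noting that the case $\a=0$ holds trivially. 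Substituting $\a\mapsto\b-k$ gives $c_k\mu(\b-k)/\mu(\b) = g^k_\b$, so the first sum in $\mathbf{A}(\a,\b)$ collapses to $\sum_{k=1}^\b g^k_\b$, whence $\mathbf{A}(\a,\b) = \psi(\b)-\psi(\a)$ with $\psi(\a):=\sum_{k=1}^\a g^k_\a$. This is precisely \eqref{inv-psi2}, which a fortiori implies \eqref{inv-sym22}, so $\bar\mu$ is invariant in both cases.

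For the necessity direction in the asymmetric case, Theorem~\ref{th:main2} provides $\psi$ with $\mathbf{A}(\a,\b)=\psi(\b)-\psi(\a)$. Evaluating at $\b=0$ and using Remark~\ref{Aaa}(b) identifies $\psi(\a)=-\mathbf{A}(\a,0)=\sum_{k=1}^\a g^k_\a$. Inserting this back and clearing denominators yields, for all $\a,\b\ge 0$,
\begin{equation*}
\sum_{k=1}^{\b} h(\a,k)\,\mu(\b-k) \;=\; \mu(\b)\sum_{k=1}^{\b} g^k_\b, \qquad \text{where } h(\a,k):=\frac{g^k_{\a+k}\mu(\a+k)}{\mu(\a)}.
\end{equation*}
The right-hand side is independent of $\a$, so subtracting the identity at $\a$ from the one at $0$ gives the lower-triangular convolution system
\begin{equation*}
\sum_{k=1}^{\b} \bigl[h(\a,k)-h(0,k)\bigr]\,\mu(\b-k) = 0, \qquad \b\ge 1.
\end{equation*}
The main (and only non-routine) step is this inversion: since $\mu(0)>0$ sits on the diagonal, induction on $\b$ yields $h(\a,k)=h(0,k)=c_k$ for every $\a\ge 0$ and $k\ge 1$, which is exactly \eqref{inv-s}.

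Finally, for the symmetric case the same sufficiency argument applies, but necessity fails: a symmetric $p$ only forces the antisymmetry \eqref{inv-sym22}, which is strictly weaker than the gradient form \eqref{inv-psi2} and does not suffice to drive the triangular system above. I would conclude the proof here; no additional obstacle arises since the only technical point, the inductive inversion of the $\mu$-convolution, is transparent thanks to $\mu(0)>0$.
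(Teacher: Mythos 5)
Your proposal is correct and follows essentially the same route as the paper: both apply Theorem \ref{th:main2} with $\psi(\a)=\sum_{k\le\a}g^k_\a$ (via Remark \ref{Aaa}(b)), reduce invariance to the identity \eqref{inf-psi-d}, and then invert a lower-triangular convolution system in $\mu$ using $\mu(0)>0$. The only (cosmetic) difference is in that last inversion: you subtract the $\a=0$ instance and induct on $\b$, while the paper first extracts the $\b=1$ case and runs a two-index induction on $(\a+k,\a)$ for the quantities $\phi^k_\a=g^k_\a\mu(\a)$ in \eqref{inv-s-fi}; the two arguments are equivalent.
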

The form \eqref{inv-s} for the rate is the generic one exhibited in 
\cite{EMZ1,GrL}. We will study such models in detail along the paper, 
in Subsection \ref{subsec-attra-cond}, examples \ref{trojka} and \ref{gen-zrp}
in Section \ref{s-examples}.\\ 

 An additional assumption on the rates yields the following corollary.
\begin{corollary}\label{cor:gzrp}
Let $\bar{\mu}\in\mathcal{S}$ be a product measure 
whose single site marginal $\mu$ satisfies $\mu(0)>0$. 
Consider a MM-ZRP with rates $p(x,y)g^k_{\a}$ for an asymmetric $p(\cdot,\cdot)$.
Assume that either the rates satisfy  \eqref{un-assum} and 
$\mu$ has a finite first moment, or that assumptions \eqref{A1}--\eqref{assum-alternative}
are satisfied. Assume that 
\begin{equation}\label{eq:g1_zr0} 
\forall\, \a\geq 1,\quad g^1_\a>0.
\end{equation}
Then $\bar{\mu}$ is invariant for the MM-ZRP  if and only if 
\begin{equation}\label{inv-rate-zrp-1}
\frac{\mu(\a)}{\mu(0)} = \frac{1}{(g^1_\a)!}
\Big(\frac{\mu(1)}{\mu(0)} g_1^1 \Big)^{\a} \text{ for all } \a\geq 1,\quad\&
\end{equation}
\begin{equation}\label{inv-rate-zrp}
\quad\&\quad g^k_{\a} = \frac{(g^1_\a)!}{(g^1_{\a-k})!(g^1_k)!}g^k_k 
\text{ for all } 1\leq k\leq \a
\end{equation}
where $(g^1_k)!$ was defined in \eqref{eq:factoriel-g}.
\end{corollary}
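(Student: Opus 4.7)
The plan is to invoke Proposition \ref{gzrp}, which, since $p$ is asymmetric, reduces the invariance of $\bar\mu$ to the family of relations \eqref{inv-s} for all $k\ge 1$ and $\alpha\ge 1$. It then only remains to show that, under assumption \eqref{eq:g1_zr0}, the system \eqref{inv-s} is equivalent to the pair \eqref{inv-rate-zrp-1}--\eqref{inv-rate-zrp}.

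For the direct implication, I would first specialize \eqref{inv-s} to $k=1$, which gives
\[
g^1_{\alpha+1}\,\mu(\alpha+1)\,=\,\mu(\alpha)\,\frac{\mu(1)\,g^1_1}{\mu(0)},\qquad \alpha\ge 1.
\]
Assumption \eqref{eq:g1_zr0} ensures every $g^1_\alpha$ is strictly positive, so this is a genuine one-step recursion, and a direct induction on $\alpha$ (starting from $\alpha=1$, where $(g^1_1)!=g^1_1$) yields the closed form \eqref{inv-rate-zrp-1}. Next, I would substitute the resulting expressions for $\mu(\alpha)/\mu(0)$, $\mu(k)/\mu(0)$ and $\mu(\alpha+k)/\mu(0)$ back into the general relation \eqref{inv-s}: the geometric prefactor $\bigl(\mu(1)g^1_1/\mu(0)\bigr)^{\alpha+k}$ cancels on both sides, leaving
\[
\frac{g^k_{\alpha+k}}{(g^1_{\alpha+k})!}\,=\,\frac{g^k_k}{(g^1_\alpha)!\,(g^1_k)!},
\]
which is exactly \eqref{inv-rate-zrp} after renaming the dummy variable $\alpha+k$ as $\alpha$. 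The endpoint $k=\alpha$ is covered by the convention $(g^1_0)!=1$ recorded in \eqref{eq:factoriel-g}.

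For the converse, assuming \eqref{inv-rate-zrp-1} and \eqref{inv-rate-zrp}, I would inject these formulas directly into the left-hand side of \eqref{inv-s}: the product form \eqref{inv-rate-zrp-1} makes the ratio $\mu(\alpha)\mu(k)/[\mu(0)\mu(\alpha+k)]$ collapse to $(g^1_{\alpha+k})!/[(g^1_\alpha)!\,(g^1_k)!]$, which combined with \eqref{inv-rate-zrp} reconstitutes $\mu(\alpha)\mu(k)\,g^k_k/\mu(0)$. Proposition \ref{gzrp} then delivers invariance.

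The whole argument is essentially bookkeeping once Proposition \ref{gzrp} is available. The only delicate point is ensuring that positivity \eqref{eq:g1_zr0} allows the $k=1$ recursion to be inverted uniquely, determining $\mu(\alpha)$ in terms of $\mu(0)$ and $\mu(1)$; there is no conceptual obstacle beyond careful manipulation of the products $(g^1_n)!$ and attention to the boundary cases $\alpha=1$ and $k=\alpha$.
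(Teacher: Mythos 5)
Your argument is correct and follows exactly the route the paper takes: reduce invariance to \eqref{inv-s} via Proposition \ref{gzrp} (valid since $p$ is asymmetric), obtain \eqref{inv-rate-zrp-1} by iterating the $k=1$ case of \eqref{inv-s}, recover \eqref{inv-rate-zrp} by substituting back (with the $k=\a$ endpoint handled by $(g^1_0)!=1$), and verify the converse by direct substitution. The paper states this only as a two-sentence sketch; your write-up supplies the same computation in detail, so there is nothing further to add.
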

Indeed putting $k=1$ in \eqref{inv-s} gives \eqref{inv-rate-zrp-1},
and then inserting \eqref{inv-rate-zrp-1} back in \eqref{inv-s}
for $k\geq 2$ gives the compatibility condition \eqref{inv-rate-zrp} 
on the rates. The converse is straightforward.
\begin{remark}\label{rk:compat-zrp-k}
Unlike for ZRP, the rates have here to satisfy a compatibility condition, 
namely~\eqref{inv-rate-zrp}.
\end{remark} 
The result of Proposition \ref{gzrp} can be used on one hand 
to find invariant measures when the rates of a process are given, and on the other
hand to set rates of a process such that it has a prescribed invariant measure. 
Hence we rephrase Proposition \ref{gzrp} and Corollary \ref{cor:gzrp} as follows.
\begin{proposition}\label{gzrp-exact} \makebox{} \\
(a) If $\mu$ is a probability measure   on $\N$ with $\mu(\a)>0$ for all $\a\in\N$, 
then formula \eqref{inv-s} gives transition rates for a MM-ZRP
for which the product  measure $\bar{\mu}$ with single site marginal 
$\mu$  is invariant.  Here 
$g^k_k>0,\, k\geq 1$ can be arbitrarily chosen  such that the rates 
$g^k_{\a+k}\text{ for all }\a,k\geq 1$ satisfy   \eqref{un-assum}.  
\\[2mm]
(b)
If a MM-ZRP has rates $g^k_\a$ which satisfy  \eqref{un-assum},   \eqref{eq:g1_zr0}
and \eqref{inv-rate-zrp},
then this MM-ZRP possesses a one-parameter family of product invariant probability
measures $\bar{\mu}_\varphi$ with single site marginal given by  \eqref{fimu}--\eqref{inv-zrp}, 
for $\varphi\in {\rm Rad}(Z')$  or $\varphi\in {\rm Rad}(Z)$. \par
\smallskip\noindent
Note that both (a) and (b) hold independently of $p(\cdot,\cdot)$. 
If we assume $p(\cdot,\cdot)$ asymmetric  then condition \eqref{inv-rate-zrp}  is   also necessary for 
the existence of product invariant measures.
\end{proposition}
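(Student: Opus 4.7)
The statement is a reformulation of Proposition~\ref{gzrp} and Corollary~\ref{cor:gzrp}, read in the ``design'' direction: choose $\mu$ or the rates, and obtain the other side as output. My plan is to apply the sufficient direction of Proposition~\ref{gzrp} in both parts (a) and (b), then invoke Corollary~\ref{family} to pass from a single invariant product measure to a one-parameter family, and finally recycle the necessary direction of Proposition~\ref{gzrp} to handle the asymmetric case.

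For part (a), given $\mu$ with $\mu(\a)>0$ everywhere and positive numbers $g^k_k$ ($k\ge 1$), I will define the remaining rates by rearranging \eqref{inv-s}: for all $\a,k\ge 1$,
\[
g^k_{\a+k}\ :=\ \frac{\mu(\a)\mu(k)}{\mu(0)\mu(\a+k)}\,g^k_k.
\]
These rates are nonnegative and vanish whenever $k>\a$, so \eqref{rateg} holds automatically, and \eqref{inv-s} holds by construction. The hypothesis is precisely that the $g^k_k$'s are chosen so that \eqref{un-assum} holds---a constraint one checks directly on the resulting family---so there is no further work: the sufficient direction of Proposition~\ref{gzrp} (valid regardless of the symmetry of $p$) yields invariance of $\bar\mu$.

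For part (b), starting from rates $(g^k_\a)$ that satisfy \eqref{un-assum}, \eqref{eq:g1_zr0} and \eqref{inv-rate-zrp}, I will fix a positive parameter $\mu(1)/\mu(0)$ within the radius of convergence of \eqref{inv-zrp-Z} and define $\mu$ via \eqref{inv-rate-zrp-1}; this yields a probability measure with finite first moment. A direct substitution---essentially reversing the manipulation done in the proof of Corollary~\ref{cor:gzrp}---shows that \eqref{inv-rate-zrp-1} combined with the compatibility condition \eqref{inv-rate-zrp} implies \eqref{inv-s}. Proposition~\ref{gzrp} then yields invariance of $\bar\mu$. Applying Corollary~\ref{family} to this invariant product measure produces the whole family $\{\bar\mu_\varphi : \varphi\in{\rm Rad}(Z')\}$ with marginal \eqref{inv-zrp}.

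For the necessity claim in the asymmetric case, Proposition~\ref{gzrp} forces \eqref{inv-s} to hold; taking $k=1$ there reproduces \eqref{inv-rate-zrp-1}, and substituting \eqref{inv-rate-zrp-1} back into \eqref{inv-s} for $k\ge 2$ gives the compatibility condition \eqref{inv-rate-zrp}. The argument as a whole is essentially bookkeeping, since all hard work has been absorbed into Proposition~\ref{gzrp} and Corollary~\ref{family}; the only point that needs a brief justification is the observation in part (b) that, under \eqref{eq:g1_zr0}, the radius of convergence of \eqref{inv-zrp-Z} is positive, so that $\mu(1)/\mu(0)$ can indeed be chosen to make $\mu$ a \textit{bona fide} probability measure with finite first moment.
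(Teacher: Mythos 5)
Your route is the paper's own: the paper gives no separate proof of Proposition \ref{gzrp-exact}, presenting it explicitly as a rephrasing of Proposition \ref{gzrp} and Corollary \ref{cor:gzrp}, with Corollary \ref{family} supplying the one-parameter family; your part (a) (define $g^k_{\alpha+k}$ from \eqref{inv-s} and use the sufficiency direction of Proposition \ref{gzrp}), your part (b) (build $\mu$ from \eqref{inv-rate-zrp-1}, check that \eqref{inv-rate-zrp-1} plus \eqref{inv-rate-zrp} gives back \eqref{inv-s} -- the ``straightforward converse'' of Corollary \ref{cor:gzrp} -- then apply Corollary \ref{family}), and your necessity argument for asymmetric $p$ are exactly that bookkeeping, and the substitution you invoke does work out.

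There is, however, one concrete step that fails as you state it: the deferred claim that \eqref{eq:g1_zr0} guarantees a positive radius of convergence for \eqref{inv-zrp-Z}. None of \eqref{un-assum}, \eqref{eq:g1_zr0}, \eqref{inv-rate-zrp} bounds the $g^1_\alpha$ from below, and the claim is false in general: take $g^k_\alpha=e^{-k(\alpha-k)}$ for $1\le k\le\alpha$ (so $g^k_k=1$, $g^1_\alpha=e^{-(\alpha-1)}>0$, and \eqref{inv-rate-zrp} holds identically, while $\sum_{k\le\alpha}k\,g^k_\alpha\le \alpha+\alpha^2e^{-(\alpha-1)}$ gives \eqref{un-assum}); then $(g^1_\alpha)!=e^{-\alpha(\alpha-1)/2}$, so $\sum_\alpha \theta^\alpha/(g^1_\alpha)!$ diverges for every $\theta>0$, and \eqref{inv-s} with $k=1$ forces $\mu(\alpha)=\mu(0)c^\alpha e^{\alpha(\alpha-1)/2}$, which is not normalizable: no product invariant probability measure exists at all, and the ``family'' of (b) is empty. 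So your construction of $\mu$ in (b) needs an additional hypothesis such as $\liminf_\alpha((g^1_\alpha)!)^{1/\alpha}>0$ (equivalently, that \eqref{inv-rate-zrp-1} is normalizable for some value of $\mu(1)/\mu(0)$), or else (b) must be read as allowing ${\rm Rad}(Z')$ to be empty; the paper's statement is itself silent on this point, but your write-up asserts a justification that does not exist. A minor secondary remark: to invoke Proposition \ref{gzrp} in part (a) you also need $\mu$ to have a finite first moment (or assumptions (\ref{A1})--(\ref{assum-alternative}) via Corollary \ref{cor:main-alternative}); like the statement itself, you leave this hypothesis implicit.
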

\begin{remark}\label{rk:inv-zrp-k>1}
In (b) above, the measure $\bar{\mu}_\varphi$, which 
depends only on the rates $g^k_\a$ for $k=1$, is the same as in the single-jump case.
\end{remark} 
We conclude that an asymmetric MM-ZRP has the set of product, 
translation-invariant, invariant measures either empty (if \eqref{inv-s}
is not satisfied for any $\mu$) or given explicitly by either
$\{\bar{\mu}_{\varphi}: \varphi\in {\rm Rad}(Z') \}$
or $\{\bar{\mu}_{\varphi}: \varphi\in {\rm Rad}(Z) \}$ 
where $\bar{\mu}_{\varphi}$ are product measures with marginals 
given by  \eqref{fimu}--\eqref{inv-zrp}. 
 
The situation is rather different for a symmetric MM-ZRP. 
In general, the only equivalence result follows from Theorem \ref{th:main2} and says that
a product, translation-invariant probability measure $\bar{\mu}$ (such that 
$\bar{\mu}(\XX)=1$) is invariant for this process with rates $g^k_{\a}$ if and only if 
\begin{eqnarray}\label{inv-s-sym}
&& \sum_{k\leq \b}g^k_{\a+k}{\mu(\a+k)\mu(\b-k)}  - \sum_{k\leq \a}g^k_{\a} {\mu(\a)\mu(\b)} \nonumber \\
&& =  - \sum_{k\leq \a}g^k_{\b+k}{\mu(\b+k)\mu(\a-k)}  + \sum_{k\leq \b}g^k_{\b}  {\mu(\b)\mu(\a)}  
 \qquad  \forall \a\geq 0,\b\geq 1.   
\end{eqnarray}
If \eqref{inv-s} is satisfied, then all product, 
translation-invariant probability measures $\bar{\mu}_{\varphi}$ with marginals 
given by \eqref{inv-zrp} are  invariant for the symmetric 
MM-ZRP with rates $g^k_{\a}$. But there could exist other product translation-invariant probability measures satisfying  
\eqref{inv-s-sym} but not~\eqref{inv-s}.\par
%

\subsubsection{Mass migration target process  (MM-TP). } 
%
\begin{theorem}\label{target}
Let $\bar{\mu}\in\mathcal{S}$ be a product measure  
whose single site marginal $\mu$ satisfies $\mu(\a)>0$ for all $\a\in\N$.
Consider a MM-TP with rates $p(x,y)g^k_{*,\a}$ for an asymmetric $p(\cdot,\cdot)$.
Assume that either the rates satisfy  \eqref{un-assum} and 
$\mu$ has a finite first moment, or that assumptions \eqref{A1}--\eqref{assum-alternative}
are satisfied. Then $\bar{\mu}$ is invariant for the MM-TP if and only if 
\begin{equation}\label{inv-tt}
g^{\a}_{*,\b} = g^{\a}_{*,0} +\frac{1}{\mu(\b)} \sum_{k=1}^{\b} H_{\a}(\b,k) g^{k}_{*,0}  
      \qquad \text{for all } \a\geq 1,\b\geq 1 
\end{equation} 
where the $H_\alpha(\beta,k)$ depend only on $\mu(\cdot)$ and are solution of the following recurrence relations:
\begin{equation}\label{tp:hrec}
\begin{cases} H_\alpha(\beta,\beta) =   \Delta_\alpha(\beta) \mu(0) 
 \ \ \text{ for } \beta\ge 1,  \\[1mm]
H_\alpha(\beta,k) =  \Delta_\alpha( k) \mu(\beta-k) +\sum_{l=1}^{\beta-k} 
 \Delta_\alpha(l) H_l(\beta-l,k) \ \ \text{ for }  \beta\ge 2,\,1\le k\le \beta-1,
\end{cases}
\end{equation} 
with for all $r>0$ and all $s\ge 0$
\begin{equation}\label{tp:Delta}
\Delta_r(s)= \frac{\mu(r+s)}{\mu(r)}- \frac{\mu(r+s-1)}{\mu(r-1)}.
\end{equation}
\end{theorem}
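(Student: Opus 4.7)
The starting point is Theorem \ref{th:main2}: since $\mu(\alpha)>0$ for every $\alpha$ and $p$ is asymmetric, $\bar{\mu}$ is invariant if and only if there exists $\psi:\N\to\R$ satisfying \eqref{inv-psi2}. So my first step is to specialize the quantity $\mathbf{A}(\alpha,\beta)$ of \eqref{A-def2} to MM--TP rates $g^k_{\alpha,\beta}=\mathbf{1}_{[k\le\alpha]}g^k_{*,\beta}$. Because $g^k_{\alpha+k,\beta-k}=g^k_{*,\beta-k}$ for every $k\le\beta$, one immediately obtains
\begin{equation*}
\mathbf{A}(\alpha,\beta)=\frac{1}{\mu(\alpha)\mu(\beta)}\sum_{k=1}^{\beta}g^k_{*,\beta-k}\mu(\alpha+k)\mu(\beta-k)-\sum_{k=1}^{\alpha}g^k_{*,\beta}.
\end{equation*}
Specializing to $\beta=0$ yields $\mathbf{A}(\alpha,0)=-\sum_{k=1}^{\alpha}g^k_{*,0}$, and Remark~\ref{Aaa}(b) then forces $\psi(\alpha)=\sum_{k=1}^{\alpha}g^k_{*,0}$, so that $\psi(\alpha+1)-\psi(\alpha)=g^{\alpha+1}_{*,0}$.

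Next I rewrite \eqref{inv-psi2} in its differenced form: since \eqref{inv-psi2} holds for every $\alpha,\beta$, it is equivalent to the pair of conditions $\mathbf{A}(\alpha,0)=-\psi(\alpha)$ (automatic from the above) together with $\mathbf{A}(\alpha+1,\beta)-\mathbf{A}(\alpha,\beta)=-g^{\alpha+1}_{*,0}$ for every $\alpha\ge 0$, $\beta\ge 1$. Computing this difference and using the definition \eqref{tp:Delta} of $\Delta_{\alpha+1}(k)$, after relabeling $\alpha+1\leftarrow \alpha$ I arrive at the clean equivalent
\begin{equation}\label{eq:plan-key}
g^{\alpha}_{*,\beta}-g^{\alpha}_{*,0}=\frac{1}{\mu(\beta)}\sum_{k=1}^{\beta}g^k_{*,\beta-k}\,\mu(\beta-k)\,\Delta_\alpha(k),\qquad \alpha\ge 1,\ \beta\ge 1.
\end{equation}

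The heart of the proof is then to show that \eqref{eq:plan-key} is equivalent to \eqref{inv-tt}--\eqref{tp:hrec}. I would proceed by strong induction on $\beta$. For $\beta=1$ the only term in the right-hand side of \eqref{eq:plan-key} is $k=1$, giving $g^\alpha_{*,1}=g^\alpha_{*,0}+\mu(0)\Delta_\alpha(1)g^1_{*,0}/\mu(1)$, which matches \eqref{inv-tt} with $H_\alpha(1,1)=\Delta_\alpha(1)\mu(0)$, in agreement with the first line of \eqref{tp:hrec}. For the induction step, assume \eqref{inv-tt} holds for every $\beta'<\beta$ with coefficients satisfying \eqref{tp:hrec}. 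In \eqref{eq:plan-key}, I isolate the $k=\beta$ term (which contributes $\mu(0)\Delta_\alpha(\beta)g^\beta_{*,0}/\mu(\beta)$), and substitute the inductive expression
\[
g^k_{*,\beta-k}=g^k_{*,0}+\frac{1}{\mu(\beta-k)}\sum_{j=1}^{\beta-k}H_k(\beta-k,j)g^j_{*,0}
\]
into each of the terms with $1\le k\le\beta-1$. Collecting the coefficient of $g^j_{*,0}$ on the right-hand side for each $1\le j\le\beta$ then yields $H_\alpha(\beta,\beta)=\Delta_\alpha(\beta)\mu(0)$ and, for $1\le j\le\beta-1$,
\[
H_\alpha(\beta,j)=\Delta_\alpha(j)\mu(\beta-j)+\sum_{l=1}^{\beta-j}\Delta_\alpha(l)H_l(\beta-l,j),
\]
which is precisely \eqref{tp:hrec}. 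Uniqueness of the coefficients $H_\alpha(\beta,\cdot)$ in such an expansion (the $g^k_{*,0}$ being regarded as free parameters, or by induction on $\beta$ and $j$) gives the converse direction as well.

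The main obstacle I anticipate is the bookkeeping in collecting the coefficient of each $g^j_{*,0}$ when the inductive expression is inserted into \eqref{eq:plan-key}: one must carefully track the range $1\le l\le\beta-j$ and reconcile the indices so that the resulting recurrence matches \eqref{tp:hrec} exactly. A secondary point to check is that the passage from \eqref{inv-psi2} to its differenced form \eqref{eq:plan-key} does not lose information, which is handled by noting that the $\beta=0$ identity pins down $\psi$ and the telescoping sum $\mathbf{A}(\alpha,\beta)=\mathbf{A}(0,\beta)+\sum_{j=0}^{\alpha-1}[\mathbf{A}(j+1,\beta)-\mathbf{A}(j,\beta)]$ recovers \eqref{inv-psi2} from \eqref{eq:plan-key}.
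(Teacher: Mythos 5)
Your reduction to Theorem \ref{th:main2}, the computation of $\mathbf{A}(\a,\b)$ for MM--TP rates, the identification $\psi(\a)=\sum_{k\le\a}g^k_{*,0}$, and the passage to the differenced identity (which is exactly \eqref{tp:rec0} in the paper) followed by the induction on $\b$ identifying the coefficients $H_\a(\b,k)$ with the recursion \eqref{tp:hrec} all match the paper's argument and are sound. However, there is a genuine gap in your claim that \eqref{inv-psi2} is \emph{equivalent} to the pair ``$\mathbf{A}(\a,0)=-\psi(\a)$ plus the differenced equations $\mathbf{A}(\a+1,\b)-\mathbf{A}(\a,\b)=-g^{\a+1}_{*,0}$'', recovered at the end by telescoping. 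Telescoping only yields $\mathbf{A}(\a,\b)=\mathbf{A}(0,\b)-\psi(\a)$; to conclude \eqref{inv-psi2} you still need the anchoring identity $\mathbf{A}(0,\b)=\psi(\b)$, i.e. $\mathbf{A}(0,\b)=-\mathbf{A}(\b,0)$, which is \emph{not} a formal consequence of the differenced system. In the ``if'' direction of the theorem this is precisely what must be checked: given rates of the form \eqref{inv-tt}--\eqref{tp:hrec} you only get the equations \eqref{tp:grec0}, and the separate condition \eqref{inf-psi-t} remains to be verified.

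This missing step is where the bulk of the paper's work lies: it is Lemma \ref{lemma:gtp4}, which shows $\mathbf{A}(0,\b)=\sum_{k\le\b}g^k_{*,0}$ for rates built from \eqref{inv-tt}--\eqref{tp:hrec}, and its proof requires the nontrivial combinatorial identity $\sum_{l=1}^{\b-k}\mu(l)H_l(\b-l,k)=\mu(0)\mu(\b)-\mu(k)\mu(\b-k)$ (Lemma \ref{lem:tp:sum}), which in turn rests on a dual form of the recursion for the $H_\a(\b,k)$ (Lemma \ref{lem:4.2}). Without an argument of this type your proof establishes necessity (invariance implies \eqref{inv-tt}--\eqref{tp:hrec}) but not sufficiency, so the ``only if''/``if'' equivalence claimed in the theorem is not fully proved by your plan; you should either supply these identities or otherwise show directly that the $\a=0$ case of \eqref{inv-psi2} holds for the constructed rates.
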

\bigskip\medskip
\noindent
By iterating \eqref{tp:hrec}, we obtain the simplest terms $H_\alpha(\beta,k)$:
\begin{eqnarray}
  H_\alpha(\beta,\beta-1) &=&  \Delta_\alpha(\beta -1) \mu(1)
  +   \Delta_\alpha(1)  \Delta_1(\beta-1) \mu(0)
 \nonumber\\
   H_\alpha(\beta,\beta- 2) &=&  \Delta_\alpha(\beta-2) \mu(2)
  +   \Delta_\alpha(1) \Delta_1(\beta-2) \bigl( \mu(1) + \Delta_1(1)  \mu(0) \bigr)
   +  \Delta_\alpha(2)   \Delta_2(\beta-2) \mu(0).
    \nonumber
\end{eqnarray}
The general term can be  guessed: 
\begin{eqnarray}\nonumber
&&   H_\alpha(\beta,k) =  \Delta_\alpha(k) \mu(\beta -k) \\ \label{tp:hexp}
&&+\sum_{r=1}^{\b-k}\, 
 \sum_{\substack{  k_1,\dots,k_r\ge 1
k_1+\dots+k_r\le \b-k}}\!\!\!\!\!\!
   \Delta_\alpha(k_1)\Delta_{k_1}(k_{2})\dots  
    \Delta_{k_{r-1}}(k_{r}) \Delta_{k_r}(k) \mu(\beta-k-\sum_{j=1}^r k_j). 
\end{eqnarray}
Like for MM-ZRP, the result of Theorem \ref{target} can be used on one hand to find
 invariant measures  of a MM-TP with given rates, and on the other
hand to set rates of a MM-TP  such that it has a prescribed invariant measure. 
 As in Corollary \ref{cor:gzrp} and Proposition \ref{gzrp-exact}, 
an additional assumption on the rates enables to write a compatibility
condition on the rates. 
\begin{proposition}\label{target-exact} 
\makebox{} \\
(a) For a probability measure $\mu$ on $\N$ with $\mu(\a)>0$ for all $\a\in\N$, 
for arbitrarily chosen $g^{\a}_{*,0} >0$ for all $\a\geq 1$  such that 
condition \eqref{un-assum}   is satisfied, 
equations \eqref {inv-tt}--\eqref {tp:hrec}  give  transition rates for a MM-TP 
for which the product measure with single site marginal $\mu$ 
is invariant, provided that all such rates are non-negative. 
A sufficient condition for  the $g^\a_{*,\b}$ to be non-negative  
is that $\Delta_r(s) \geq 0$ for all $r>0$, $s\ge 0$,
or equivalently that the ratio $\mu(n+1)/\mu(n)$ is a nondecreasing function of $n$. 
\\[2mm]
(b) 
Consider a MM-TP with the following rates.
For $k\ge 1$, $g^k_{*,0}$ and $g^k_{*,1}$ are given such that
\begin{equation}\label{eq:weak-phil}
g^1_{*,0}> 0,\quad  g^1_{*,1}> 0;\qquad  \text{ and }\ \forall   \a\geq 2,\quad 
g^1_{*,1}+\sum\limits_{k=2}^\a (g^k_{*,1} -  g^k_{*,0}) >0.
\end{equation} 
For the weights $w(\a)$, $\a\ge 0$, defined as 
\begin{equation}\label{ex5-w-def}
\left\{ 
\begin{array}{ll}
w(0)= 1\\
w(1)= g^1_{*,0}\\
w(2)=g^1_{*,1} g^1_{*,0}\\
w(\a)= w(\a -1) \left( g^1_{*,1}
+\sum\limits_{i=2}^{\a -1} (g^i_{*,1} - g^i_{*,0} )\right) \qquad \forall\,  \a \ge 3
\end{array}
\right.
\end{equation}
then setting  for all $r>0$ and all $s\ge 0$,
\begin{equation}\label{tp:Delta*}
\Delta^*_r(s)= \frac{w(r+s)}{w(r)}- \frac{w(r+s-1)}{w(r-1)}
\end{equation}
and
\begin{equation}\label{target-rates-c}
\qquad\left\{
\begin{array}{l}
\displaystyle{ H^*_\alpha(\beta,\beta) =  \frac{w(0)}{w(\b)} 
\Delta^*_\alpha(\beta) } \hfill   \forall\, \a\geq 1,\b\geq 1,\\[4mm]
\displaystyle{ H^*_\alpha(\beta,k) =  \frac{w(\b-k)}{w(\b)} 
\Delta^*_\alpha(k) }\\ [4mm]
\phantom{\displaystyle{ H^*_\alpha(\beta,k)}}
+  \displaystyle{ \sum\limits_{l=1}^{\b-k} \frac{w(\b-l)}{w(\b)} 
\Delta^*_\alpha(l) H^*_l(\b-l,k)} \qquad \forall\, \a\geq 1,\b\geq 2,\, 1\le k\le \beta-1,
\end{array}
\right.
\end{equation}
the rates $g^{\a}_{*,\b}$, for all $\a\geq 1$, $\b > 1$, are given by
\begin{equation}\label{target-rates-cc}
\displaystyle{g^{\a}_{*,\b} = g^{\a}_{*,0} + \sum\limits_{k=1}^{\b} H^*_{\a}(\b,k) g^{k}_{*,0}}\,.
\end{equation}
Then, provided these rates satisfy \eqref{un-assum}, 
there is a one-parameter family of product invariant probability measures 
$\{\bar{\mu}_\varphi,\vfi\in {\rm Rad}(Z') \}$ or $\{\bar{\mu}_{\varphi}: \varphi\in {\rm Rad}(Z) \}$  
with single site marginal given by, for $\tilde\varphi_\mu$ defined in \eqref{tildefimu},
\begin{eqnarray}\label{inv-tar}
 \mu_\varphi(\a) & =& \dfrac{\mu(0)}{Z_\varphi} (\varphi\tilde\varphi_\mu)^\a \, w(\a) \ 
  \text{ for }\a\geq 0,\\\label{inv-tar2}
 Z_{\varphi}&=&\mu(0)\sum_{n\ge 0} (\varphi\tilde\varphi_\mu)^{n}w(n).
  \end{eqnarray}
Note that both (a) and (b) hold independently of  $p(\cdot,\cdot)$. 
If we assume $p(\cdot,\cdot)$ asymmetric  then 
conditions  \eqref{eq:weak-phil}--\eqref{target-rates-cc} on rates are   also necessary for 
the existence of product invariant probability measures.
\end{proposition} 
\begin{remark}\label{rk:tp-with-attr}
(a) In Proposition \ref{target-exact}(b), if  $g^k_{*,1}\ge  g^k_{*,0}$ for $k\ge 2$, 
then \eqref{eq:weak-phil}
is satisfied, but the process will not be attractive unless $g^k_{*,1}=g^k_{*,0}$, see 
\eqref{atr:GTP-1} in Lemma \ref{lemma:attract} below. Similarly, 
in Proposition \ref{target-exact}(a), if 
$\mu(n+1)/\mu(n)$ is strictly nondecreasing, the process will not be attractive, see 
Proposition \ref{no-attract-cond-single-jump-and-mm-tp} below. \par
(b) Taking $g^k_{*,1}= g^k_{*,0}=0$ for $k\ge 2$ in Proposition \ref{target-exact}(b)
yields $g^{\a}_{*,\b}=0$, for all $\a>1,\b\ge 0$ by
\eqref{target-rates-cc}, and
\begin{eqnarray*}
\Delta^*_1(1)&=& g^1_{*,1}- g^1_{*,0}\quad;\quad \Delta^*_1(l)=0\quad\forall\,l\ge 2\\
H^*_1(\b,1) &=& \frac{g^1_{*,1}- g^1_{*,0}}{g^1_{*,0}}\quad\forall\,\b\ge 1\quad\text{\rm thus}\\
g^1_{*,\b} &=& g^1_{*,0} +  H^*_1(\b,1) g^1_{*,0}= g^1_{*,1}\quad\forall\,\b\ge 1,
\end{eqnarray*}
that is, we recover \eqref{compat-tp-asym}.
\end{remark}
Therefore an asymmetric MM-TP has the set of all product, 
translation-invariant, invariant measures either empty 
(when rates do not satisfy \eqref{inv-tt}--\eqref{tp:hrec}
 for any $\mu$) or given explicitly as the set 
$\{\bar{\mu}_{\varphi}: \varphi\in {\rm Rad}(Z') \}$ or $\{\bar{\mu}_{\varphi}: \varphi\in {\rm Rad}(Z) \}$ 
where $\bar{\mu}_{\varphi}$ are product measures with marginals given by \eqref{inv-tar}.
 
Note that for a symmetric MM-TP, the set of all product, translation-invariant,
invariant measures is in general bigger. In addition to the measures $\bar{\mu}_{\varphi}$ 
with single site marginal 
\eqref{inv-tar} when \eqref{inv-tt}--\eqref{tp:hrec} are satisfied, there may exist 
other product translation-invariant measures satisfying  \eqref{inv-sym22} 
but not~\eqref{inv-tt}--\eqref{tp:hrec}.\\

\section{Proofs for Section \ref{s-invariant}}\label{s-proofs-sec3}

\subsection{Proofs for Subsection \ref{subsec:cns-product}}
Hereafter, we prove Theorem \ref{th:main2}, Remark \ref{rk:alternative-ax},  
and Corollaries \ref{cor:main-alternative} and \ref{family}. 
We first prove the following  lemma.

\begin{lemma}\label{lem:properties-Amu-muA}
Under the assumptions of Theorem \ref{th:main2}, for all $\a,\b\in\N $,  
the three quantities   $\sum_{\g\ge 0}  |\mathbf{A}(\a,\g)| \mu(\g)$, 
$\sum_{\g\ge 0}  |\mathbf{A}(\g,\b)| \mu(\g)$,
and
$\sum_{\a\geq 0}\sum_{\g\geq 0}  |\mathbf{A}(\a,\g) | \mu(\g)\mu(\a)$ are finite.
Moreover
if \eqref{inv-psi2} is satisfied, we have
\begin{equation}\label{31bis}
\sum\limits_{\b\geq 0} |\psi(\b)| \mu(\b)   < \infty .
\end{equation}
\end{lemma}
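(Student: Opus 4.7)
The plan is to split $\mathbf{A}(\a,\b)= T_+(\a,\b) - T_-(\a,\b)$ with
$$T_-(\a,\b) = \sum_{k\le\a} g^k_{\a,\b},\qquad T_+(\a,\b) = \frac{1}{\mu(\a)\mu(\b)}\sum_{k\le\b} g^k_{\a+k,\b-k}\mu(\a+k)\mu(\b-k),$$
and to bound the contribution of each piece to the three sums separately. The only tools needed are the trivial inequality $g^k_{\cdot,\cdot}\le k\, g^k_{\cdot,\cdot}$ valid for $k\ge 1$ (which lets me invoke \eqref{un-assum}) together with the assumed finiteness of $\|\mu\|_1=\sum_n n\mu(n)$. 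Terms with $\mu(\a)\mu(\b)=0$ contribute nothing by the definition \eqref{A-def2}, so in what follows I tacitly restrict to indices where the denominators are positive.

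The $T_-$ piece is immediate: \eqref{un-assum} gives $T_-(\a,\b)\le C(\a+\b)$, so each of $\sum_\g T_-(\a,\g)\mu(\g)$, $\sum_\g T_-(\g,\b)\mu(\g)$ and $\sum_{\a,\g}T_-(\a,\g)\mu(\a)\mu(\g)$ is controlled by $\|\mu\|_1$ plus a linear expression in $\a$ or $\b$, hence finite. For $T_+$ I perform the change of variable $m=\a+k$, $n=\b-k$. In $\sum_\g T_+(\a,\g)\mu(\g)$ the inner object rewrites, after the factor $\mu(\a)^{-1}$ is pulled out, as $\sum_{m\ge\a+1}\sum_{n\ge 0} g^{m-\a}_{m,n}\mu(m)\mu(n)$; using $(m-\a)\,g^{m-\a}_{m,n}\le\sum_{k'\ge 1}k'g^{k'}_{m,n}\le C(m+n)$ from \eqref{un-assum} yields a bound $\le 2C\|\mu\|_1/\mu(\a)<\infty$. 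The quantity $\sum_\g T_+(\g,\b)\mu(\g)$ becomes a \emph{finite} sum over $k\le\b$ of $\mu(\b-k)/\mu(\b)$ times $\sum_{m\ge k}g^k_{m,\b-k}\mu(m)$; the bound $k g^k_{m,\b-k}\le C(m+\b-k)$ from \eqref{un-assum} at $(m,\b-k)$ gives each such inner sum a finite value of order $(\|\mu\|_1+\b-k)/k$. For the double sum the factors $\mu(\a)\mu(\g)$ cancel the denominators exactly, and after the same change of variable the sum reduces to $\sum_{k\ge 1}\sum_{m\ge k}\sum_{n\ge 0}g^k_{m,n}\mu(m)\mu(n)\le\sum_{m,n}\mu(m)\mu(n)\,C(m+n)=2C\|\mu\|_1$.

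For \eqref{31bis} I fix the additive constant in \eqref{inv-psi2} by imposing $\psi(0)=0$; Remark \ref{Aaa}(b) then gives $\psi(\b)=\mathbf{A}(0,\b)$, so that \eqref{31bis} is exactly the first finiteness statement specialized to $\a=0$. The only mildly delicate point in the whole argument is the book-keeping in the change of variable for $T_+$: the $k$-trick $g^k\le k g^k$ must be applied to the factor $g^k_{m,n}$ with $m=\a+k$ playing the role of the starting pile, so that \eqref{un-assum} is invoked at the correct pair of arguments $(m,n)$ rather than at $(\a,\b)$. Once this is in place, everything collapses to a single appeal to $\|\mu\|_1<\infty$.
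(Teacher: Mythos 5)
Your proposal is correct and follows essentially the same route as the paper's proof: the same split of $\mathbf{A}$ into its positive and negative parts, the same change of variables $m=\alpha+k$, $n=\gamma-k$, and the same appeal to \eqref{un-assum} together with $\|\mu\|_1<\infty$ (your direct Tonelli-style rearrangement of nonnegative terms simply replaces the paper's truncation at $N$ followed by $N\to\infty$, and you make explicit the symmetric case $\sum_{\gamma}|\mathbf{A}(\gamma,\beta)|\mu(\gamma)$ that the paper omits). The only caveat is in the proof of \eqref{31bis}: normalizing $\psi(0)=0$ and invoking Remark \ref{Aaa}(b) implicitly requires $\mu(0)>0$, so strictly under the hypotheses of Theorem \ref{th:main2} you should instead fix any $\alpha_0$ with $\mu(\alpha_0)>0$ and bound $|\psi(\beta)|\le|\mathbf{A}(\alpha_0,\beta)|+|\psi(\alpha_0)|$ on $S_\mu$, as the paper does — a one-line adjustment.
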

\begin{proof}
We define the set 
\begin{equation}\label{def:Smu}
S_\mu = \{\a\in \N, \mu(\a)\not=0\}.
\end{equation}
For $N$ fixed, we deal simultaneously with the term 
$\sum_{\g\le N} | \mathbf{A}(\a,\g)| \mu(\g)$ for a given value of $\a\in\N$  with $\a\in S_\mu$ 
and for its  sum over $\a\le N$, that is $\sum_{\a\le N}\sum_{\g\le N} | \mathbf{A}(\a,\g)| \mu(\g)\mu(\a)$.
 We omit a similar proof for $\sum_{\g\ge 0}  |\mathbf{A}(\g,\b)| \mu(\g)$.
 \begin{eqnarray}\label{les-2-sommes-a}
&&\sum_{\g\le N}  |\mathbf{A}(\a,\g)| \mu(\g) \le 
\frac{1}{\mu(\a)} \sum_{\g\le N}  \sum_{k\leq \g}   g^k_{\a+k,\g-k} \mu(\a+k)\mu(\g-k)
 + \sum_{\g\le N}\sum_{k\leq \a} g^k_{\a,\g} \mu(\g) \label{les-2-sommes-a-abs}\\
&&\sum_{\a\le N}\sum_{\g\le N}  |\mathbf{A}(\a,\g) |\mu(\g)\mu(\a)\le
\sum_{\a\le N}\sum_{\g\le N}  \sum_{k\leq \g} g^k_{\a+k,\g-k}\mu(\a+k)\mu(\g-k) \nonumber\\
&&\qquad\qquad\qquad\qquad\qquad\qquad
 + \sum_{\a\le N}\sum_{\g\le N}\sum_{k\leq \a} g^k_{\a,\g}\mu(\a)\mu(\g).\label{les-2-sommes-b-abs}
\end{eqnarray}  
 We  have by \eqref{un-assum}
\begin{eqnarray}\label{2eme-partie-a}
\sum_{\g\le N}\mu(\g)\sum_{k\leq \a} g^k_{\a,\g}&\le& \sum_{\g\le N}\mu(\g) C(\a+ \g)\le C(\a+ \|\mu\|_1)\\
\sum\limits_{\a\le N}\mu(\a)\sum\limits_{\g\le N}\mu(\g)\sum\limits_{k\leq \a} g^k_{\a,\g}&\le&
\sum\limits_{\a\le N}\mu(\a)C(\a+ \|\mu\|_1)\le 2C\|\mu\|_1.
\label{2eme-partie-b}
\end{eqnarray} 
We have also
 \begin{eqnarray}
\sum_{\g\le N}  \sum_{k\leq \g}   g^k_{\a+k,\g-k} \mu(\a+k)\mu(\g-k) &= &
\sum_{k\leq N}\sum_{k\leq \g\leq N}  g^k_{\a+k,\g-k} \mu(\a+k)\mu(\g-k)\nonumber\\ &\leq&
\sum_{k\leq N} \sum_{\a\le N}\sum_{k\leq \g\le N}  g^k_{\a+k,\g-k}\mu(\a+k)\mu(\g-k)
\label{eq:estimee-compliquee-2}
\end{eqnarray} 
 and finally 
 \begin{eqnarray}
\sum_{k\leq N} \sum_{\a\le N}\sum_{k\leq \g\le N}  g^k_{\a+k,\g-k}\mu(\a+k)\mu(\g-k)
 & = &
\sum_{k\leq N} \ \sum_{k\leq \a^* \le k+N}\ \sum_{\g^*\le N-k}  g^k_{\a^*,\g^*}\mu(\a^*)\mu(\g^*) \nonumber\\
& \leq &
\sum_{k\leq N}\ \sum_{k\le\a^* \le 2N}\ \sum_{0\le \g^*\le N} g^k_{\a^*,\g^*}\mu(\a^*)\mu(\g^*)\nonumber
\\  & \leq & 
\sum_{0\le\a^* \le 2N}\ \sum_{0\le \g^*\le N}\ \sum_{k\leq \a^*} g^k_{\a^*,\g^*}\mu(\a^*)\mu(\g^*)\nonumber\\
&\leq& \sum_{0\le\a^* \le 2N}\ \sum_{0\le \g^*\le N} \mu(\a^*)\mu(\g^*)C(\a^*+ \g^*)\nonumber\\ &\leq&
2C\|\mu\|_1.
\label{eq:estimee-compliquee-1}
\end{eqnarray} 
Combining \eqref{les-2-sommes-a}--\eqref{eq:estimee-compliquee-1} we get
 \begin{eqnarray}\label{A-integrable}
\sum_{\g\le N}  |\mathbf{A}(\a,\g)| \mu(\g)&\leq& 2\frac{C}{\mu(\a)} \|\mu\|_1  + C(\a+ \|\mu\|_1)\\
\sum_{\a\le N}\sum_{\g\le N}  |\mathbf{A}(\a,\g) |\mu(\g)\mu(\a)&\le &4C\|\mu\|_1\label{A-integrable-bis}
\end{eqnarray} 
and we can let  $N\to+\infty$ to conclude that for any $\a\geq 0$  
with $\a\in S_\mu$,  
 $\sum_{\g\ge 0}  |\mathbf{A}(\a,\g)| \mu(\g)$ is finite, and so is
$\sum_{\a\geq 0}\sum_{\g\geq 0}  |\mathbf{A}(\a,\g) | \mu(\g)\mu(\a)$.

Finally, if  $\psi$ is a function on $\N$ such that 
$\psi(\beta)-\psi(\a) =  \mathbf{A}(\a,\b)$ for all $\a,\b\in S_\mu$, then for every $\a\in S_\mu$,
\begin{equation*}
\sum\limits_{\b\geq 0} |\psi(\b)| \mu(\b)  
\leq  \sum\limits_{\b\geq 0} |\mathbf{A}(\a,\b)| \mu(\b) + |\psi(\a)| < \infty 
\end{equation*}
and \eqref{31bis} is also satisfied.
\end{proof}

\begin{proof} \textit{of Theorem \ref{th:main2}.}

\noindent
{\bf Step 1.} 
We  first note that, using \eqref{sum-a_x-finite}, 
\begin{equation}\label{int-eta-when-product}
\int_\XX \|\eta \| \ d\bar{\mu}(\eta) \leq \int \|\eta \| \ d\bar{\mu}(\eta)=\sum_{x\in\X}a_x\int \eta(x) d\mu(\eta(x))=\left(\sum_{x\in \X} a_x\right)\left(\sum_{n\ge 0} 
n \mu(n)\right).
\end{equation}
Hence,  if $\mu$ has a finite first moment, then  $\bar{\mu}(\XX)=\bar{\mu}(\|\eta\|<\infty)=1$ is satisfied. \par
\medskip
Now, as a consequence of Proposition \ref{intL=0}, Lemmas \ref{L-n-conv} 
and \ref{bound1}, the probability measure $\bar{\mu}$ is invariant 
 if and only if  for every bounded cylinder function $f$ on $\N^{\Zd}$,
\begin{equation}\label{inv}
0= \int \L f (\eta)\,d\bar{\mu}(\eta) =\lim_{n\rightarrow\infty}\int \L_n f (\eta)\,d\bar{\mu}(\eta),
\end{equation}
where $ \L_n $ is the finite volume approximation of  $\L$ defined by \eqref{ngenerator}.\par
\medskip
Let $f$ be a bounded cylinder function, and denote by $V_f\in \Zd$ its finite support,
 and by $m_f$ an upper bound for $|f|$.
We want to derive an expression for $\int \L_n f (\eta)\,d\bar{\mu}(\eta)$, 
involving $ \mathbf{A}(\cdot,\cdot)$ defined in \eqref{A-def2}. We have 
\begin{eqnarray}
\int \L_n f (\eta)\,d\bar{\mu}(\eta) 
&=& \!\!\!\!\!\!\!\!\sum_{x,y\in \X_n,x\neq y,\{x,y\}\cap V_f\not=\emptyset} p(x,y)
 \int \sum_{k>0} g^k_{\eta(x),\eta(y) } (f(\etb) -f(\eta))\,d\bar{\mu}(\eta).
 \label{aux6}
\end{eqnarray}
For further use, note that 
since the probability measure $\mu$ has a finite first moment $\|\mu\|_1$, we have  by \eqref{un-assum},
for $x,y\in \X_n,x\neq y$ 
\begin{equation}\label{eq:2sommes-cv}\left.
  \begin{array}{l}
  \int\sum\limits_{k>0 } g^k_{\eta(x),\eta(y) }| f(\etb)| \,d\bar{\mu}(\eta)  \\[2mm]
  \int\sum\limits_{k>0} g^k_{\eta(x),\eta(y) }  |f(\eta)| \,d\bar{\mu}(\eta) 
  \end{array}
  \right\}
\leq m_f C\int (\eta(x)+\eta(y)) \,d\bar{\mu}(\eta) 
\leq 2m_f C \|\mu\|_1 .
\end{equation}\\
\noindent
{\bf Step 2.}
We now prove that condition \eqref{inv-sym21} is necessary.
Let $\a_0$, $\b_0$ such that $\mu(\a_0)\mu(\b_0)= 0$ and consider the function 
$f_0(\eta) = 1_{\{\eta(x_0)=\a_0,\eta(y_0)=\b_0\}}$ for some $x_0$, $y_0$ with $p(x_0,y_0)\not=0$.
If $\bar{\mu}$ is an invariant product measure, we have, since $\mu(\a_0)\mu(\b_0)= 0$,
\begin{eqnarray*}
0 &=& \int \L_n f_0 (\eta)\,d\bar{\mu}(\eta)\\
&=& \int \sum_{x,y\in \X_n,x\neq y} p(x,y) \sum_{k>0} g^k_{\eta(x),\eta(y) } (f_0(S^k_{x,y}\eta) -f_0(\eta))\,d\bar{\mu}(\eta)\\
&=& \int \sum_{x,y\in \X_n,x\neq y} p(x,y) \sum_{k>0} g^k_{\eta(x),\eta(y) } f_0(S^k_{x,y}\eta) \,d\bar{\mu}(\eta).
\end{eqnarray*}
The last line is a sum of non negative terms, thus each term is zero and in particular the one for $x=x_0$, $y=y_0$, which reads
\begin{equation*}
\sum_{k\le \b_0} g^k_{\a_0+k,\b_0-k } \mu(\a_0+k)\mu(\b_0-k) = 0
\end{equation*}
that is Condition  \eqref{inv-sym21}.\\ \\
{\bf Step 3.}
For $x,y\in \X_n,x\neq y,\{x,y\}\cap V_f\not=\emptyset$, thanks to \eqref{eq:2sommes-cv}, 
the integral in the right hand side of \eqref{aux6} reads
\begin{eqnarray} 
&&\int \sum_{k>0} g^k_{\eta(x),\eta(y) } (f(\etb) -f(\eta))\,d\bar{\mu}(\eta) \nonumber\\
&&\qquad= \sum_{k>0} \int_{\{\eta(x)\ge k\}}  g^k_{\eta(x),\eta(y) } f(\etb) \,d\bar{\mu}( \eta)
 -  \sum_{k>0} \int_{\{\eta(x)\ge k\}}   g^k_{\eta(x),\eta(y) } f(\eta) \,d\bar{\mu}(\eta)
\nonumber\\
&&\qquad= \sum_{k>0} \int_{\{\eta(y)\ge k\}}  g^k_{\eta(x)+k,\eta(y) -k} f(\eta) \,d\bar{\mu}(S_{y,x}^k \eta)
 -  \sum_{k>0} \int_{\{\eta(x)\ge k\}}   g^k_{\eta(x),\eta(y) } f(\eta) \,d\bar{\mu}(\eta) \nonumber\\
 &&\qquad= \sum_{k>0} \int_{\{\eta(y)\ge k\}}  1_{\{\eta(x)\in S_\mu\}} 1_{\{\eta(y)\in S_\mu\}}  
  g^k_{\eta(x)+k,\eta(y)
  -k} f(\eta) \,d\bar{\mu}(S_{y,x}^k \eta)\nonumber\\
&&\qquad\qquad
 -  \sum_{k>0} \int_{\{\eta(x)\ge k\}}   g^k_{\eta(x),\eta(y) } f(\eta) \,d\bar{\mu}(\eta) \nonumber\\
  &&\qquad= \sum_{k>0} \int_{\{\eta(y)\ge k\}}  1_{\{\eta(x)\in S_\mu\}} 1_{\{\eta(y)\in S_\mu\}} 
   g^k_{\eta(x)+k,\eta(y) -k} f(\eta)D_\mu^k(\eta(x),\eta(y)) \,d\bar{\mu}(\eta)\nonumber\\
&&\qquad\qquad
 -  \sum_{k>0} \int_{\{\eta(x)\ge k\}}   g^k_{\eta(x),\eta(y) } f(\eta) \,d\bar{\mu}(\eta) \nonumber\\
 &&\qquad= \int 1_{\{\eta(x)\in S_\mu\}} 1_{\{\eta(y)\in S_\mu\}}  
 \bigl(  \sum_{k=1}^{\eta(y)}   g^k_{\eta(x)+k,\eta(y) -k} 
 D_\mu^k(\eta(x),\eta(y))  -  \sum_{k=1}^{\eta(x)} g^k_{\eta(x),\eta(y) } \bigr)  f(\eta) \,d\bar{\mu}(\eta) 
\nonumber\\
&&\qquad=  \int 1_{\{\eta(x)\in S_\mu\}} 1_{\{\eta(y)\in S_\mu\}}  
\mathbf{A}(\eta(x),\eta(y)) f(\eta) \,d\bar{\mu}(\eta)\nonumber\\
&&\qquad=  \int  \mathbf{A}(\eta(x),\eta(y)) f(\eta) \,d\bar{\mu}(\eta)\label{arranged-intLf}
\end{eqnarray}
 where we used Condition \eqref{inv-sym21} in the third equality and introduced in the next one the quantity
 \begin{eqnarray}
 D_{\mu}^k (\alpha,\beta)=
 \begin{cases}
 \displaystyle{\frac{\mu(\alpha + k)\mu(\b -k)}{\mu(\a) \mu(\b)}} &\hbox{ if } k\le \b \hbox{ and }\mu(\a) \mu(\b)\not=0, \cr
 1& \hbox{ if }\mu(\a) \mu(\b)=0.
 \end{cases}
\end{eqnarray}
 \noindent
{\bf Step 4.}
We prove  \textit{necessity} of condition (\ref{inv-psi2}) and  
condition (\ref{inv-sym22}), respectively for the asymmetric  and  symmetric cases.
 Let us denote 
\begin{equation}\label{Amu-muA}
  (\mathbf{A}\mu)(\a) = \sum_{\g\ge 0}  \mathbf{A}(\a,\g) \mu(\g),\qquad    (\mu\mathbf{A})(\b) = \sum_{\g\ge 0}  \mathbf{A}(\g,\b) \mu(\g).
\end{equation} 
Using \eqref{arranged-intLf}, we have for any $n$ large enough so that
$V_f \subset \X_n $
\begin{eqnarray}
\int \L_n f (\eta)\,d\bar{\mu}(\eta)  
         &=&  \sum_{x,y\in V_f,x\neq y} p(x,y) 
         \int \mathbf{A}(\eta(x),\eta(y)) f(\eta)\ d\bar{\mu}(\eta)\nonumber\\
&\qquad&+\sum_{x\in V_f,y\in \X_n\setminus V_f} p(x,y) 
\int \Bigl(\sum_{\b\ge 0}\mathbf{A}(\eta(x),\b) \mu(\b) \Bigr) f(\eta)\ d\bar{\mu}(\eta)\nonumber\\
&\qquad&+\sum_{x\in \X_n\setminus V_f,y\in V_f} p(x,y) 
\int \Bigl(\sum_{\a\ge 0}\mathbf{A}(\a,\eta(y))\mu(\a) \Bigr) f(\eta)\ d\bar{\mu}(\eta)\nonumber\\
 &=&  \sum_{x,y\in  V_f,x\neq y} p(x,y) \int \mathbf{A}(\eta(x),\eta(y)) f(\eta)\ d\bar{\mu}(\eta)\nonumber\\
&\qquad&+\sum_{x\in V_f,y\in \X_n\setminus V_f} p(x,y) 
\int (\mathbf{A}\mu)(\eta(x))  f(\eta)\ d\bar{\mu}(\eta)\nonumber\\
&\qquad&+\sum_{x\in \X_n\setminus V_f,y\in V_f} p(x,y) 
\int (\mu\mathbf{A})(\eta(y)) f(\eta)\ d\bar{\mu}(\eta).
\label{ineq:pour1et2}
\end{eqnarray}
In the first equality, we have split the sum over $x$ and $y$ in three terms, 
depending on whether they belong to the support of $f$, and used the fact that $\bar \mu$ 
is a product measure. In the second equality, we introduced notations \eqref{Amu-muA}.

Consider $f(\eta)=\1_{[\eta(x_0)=\a]}$ for some fixed $x_0\in \Zd$ and $\a \in S_\mu$. 
We get in this case from \eqref{ineq:pour1et2},
\begin{equation*}
\int \L_n f (\eta)\,d\bar{\mu}(\eta)  
         = \sum_{y\in \X_n\setminus \{x_0\}}  
         \Bigl( p(x_0,y) (\mathbf{A}\mu)(\a)  + p(y,x_0) (\mu\mathbf{A})(\a) \Bigr) \mu(\a) .
\end{equation*}
Taking the limit $n\to+ \infty$ gives by \eqref{inv}
\begin{equation}\label{r2}
 (\mathbf{A}\mu)(\a) + (\mu\mathbf{A})(\a)=0.
\end{equation}
Now consider $f(\eta)=\1_{[\eta(x_0)=\a,\eta(y_0)=\b]}$ for some fixed $x_0$, $y_0$ in $\Zd$, $x_0\not= y_0$ 
and $\a, \b \in S_\mu$. We get from \eqref{ineq:pour1et2},
\begin{eqnarray}
\int \L_n f (\eta)\,d\bar{\mu}(\eta)  
 =\mu(\a) \mu(\b)&\Bigl(&p(x_0,y_0) \mathbf{A}(\a,\b) + p(y_0,x _0) \mathbf{A}(\b,\a) \nonumber\\
&&+\sum_{z\in \X_n\setminus \{x_0,y_0\}}\Bigl( p(x_0,z) (\mathbf{A}\mu)(\a)  + p(z, x_0) (\mu\mathbf{A})(\a) \Bigr)\nonumber\\
&&+\sum_{z\in \X_n\setminus \{x_0,y_0\}}\Bigl( p(y_0,z) (\mathbf{A}\mu)(\b)  + p(z, y_0) (\mu\mathbf{A})(\b) \Bigr) \; \Bigr).\nonumber
\end{eqnarray}
Taking the limit $n\to+ \infty$ gives by \eqref{inv}
\begin{eqnarray}
\mu(\a) \mu(\b)&\Bigl(& p(x_0,y_0) \mathbf{A}(\a,\b) + p(y_0,x _0) \mathbf{A}(\b,\a)  \nonumber\\
&&+( 1 - p(x_0,y_0) ) \bigl(  (\mathbf{A}\mu)(\a) + (\mu\mathbf{A})(\b) \bigr) \nonumber\\
&& +( 1 - p(y_0,x_0) ) \bigl( (\mu\mathbf{A})(\a)   + (\mathbf{A}\mu)(\b)  \bigr) \Bigr) = 0.
\label{r1}
\end{eqnarray}
Using \eqref{r2} and the fact that $\mu(\a) \mu(\b)\not=0$, \eqref{r1} becomes
\begin{eqnarray}
&& p(x_0,y_0) \Bigl(\mathbf{A}(\a,\b) + (\mu\mathbf{A})(\a)  - (\mu\mathbf{A})(\b) )\Bigr) \nonumber\\
&&+p(y_0,x_0) \Bigl(  \mathbf{A}(\b,\a) + (\mu\mathbf{A})(\b)   - (\mu\mathbf{A})(\a)\Bigr) = 0.
\label{r3}
\end{eqnarray}
The same relation being valid under exchange of $x_0$ and $y_0$, both the symmetric and antisymmetric 
parts of equation \eqref{r3} are separately zero.
The symmetric part reads
\begin{equation}\label{f1}   
\Bigl(p(x_0,y_0) +p(y_0,x_0)  \Bigr) \Bigl(  \mathbf{A}(\a,\b) +  \mathbf{A}(\b,\a) \Bigr) =0
\end{equation}
which implies  \eqref{inv-sym22} since there is some $(x_0,y_0)$ such that $p(x_0,y_0) +p(y_0,x_0) \not=0$.

\noindent Assuming  \eqref{inv-sym22}, the antisymmetric part gives
\begin{equation*}\label{r4}
 \left( p(x_0,y_0)-p(y_0,x_0)  \right) \Bigl( \mathbf{A}(\a,\b)  -(\mu\mathbf{A})(\b)  + (\mu\mathbf{A})(\a)\Bigr)  =0. 
\end{equation*}
If $p(\cdot,\cdot)$ is symmetric, this equation is identically zero for any choice of $\mathbf{A}$,
and we are left with \eqref{inv-sym22} alone.  
If $p(\cdot,\cdot)$ is asymmetric, there is a choice of $(x_0,y_0)$ such that $p(x_0,y_0)\neq p(y_0,x_0)$  
and we obtain 
\begin{equation}\label{f3}
 \mathbf{A}(\a,\b) =  \psi(\b) - \psi(\a) .
\end{equation} 
Since \eqref{f3} defines $\psi$ up to a constant, 
we may take $\psi(0)=0$ and write  $\psi(\cdot)$ in terms of $\mathbf{A}$ as
 \begin{equation}\label{f4}
 \psi(\g) = (\mu\mathbf{A})(\g) - (\mu\mathbf{A})(0) .
 \end{equation} 
\noindent
{\bf Step 5.}
Let us show now that conditions \eqref{inv-sym21}--\eqref{inv-sym22}  
and conditions \eqref{inv-sym21}--\eqref{inv-psi2}  \textit{are sufficient} for $\bar{\mu}$ to be invariant, 
respectively in the symmetric  and in the asymmetric  case. \\[-3mm]

Assume first $p(x,y)=p(y,x)$ for all $x,y$ and conditions \eqref{inv-sym21}--\eqref{inv-sym22} satisfied.
Equation \eqref{arranged-intLf} gives
\begin{eqnarray} 
 \int \L_n f (\eta)\ d\bar{\mu} (\eta)
 &=&  \frac{1}{2} \int  \sum_{x,y\in\X_n,x\neq y,\{x,y\}\cap V_f\not=\emptyset} 
p(x,y) \,\bigl(\mathbf{A}(\eta(x),\eta(y))+ 
\mathbf{A}(\eta(y),\eta(x))\bigr) f(\eta)\ d\bar{\mu} (\eta) \nonumber\\
\label{eq:cs-main-sym} &=& 0 .
\end{eqnarray}

Assume now $p(.,.)$ asymmetric and conditions \eqref{inv-sym21}--\eqref{inv-psi2} satisfied.
Thus, recalling that $f$ is a cylinder function, and using that by \eqref{31bis}, $\psi$ is integrable, we have 
\begin{equation}\label{psif2}\
  \sum_{x,y\in \X_n,\{x,y\}\cap V_f\not=\emptyset} p(x,y)
\int\Bigl|  \psi(\eta(y)) f(\eta)\Bigr|d\bar{\mu}(\eta)
  \le  m_f |V_f| \sum_{\a\in\N} |\psi(\a)| \mu(\a) < \infty
\end{equation}
then
\begin{eqnarray*}
 \int \L_n f (\eta)\ d\bar{\mu}(\eta) &=&   \int\sum_{x,y\in \X_n,\{x,y\}\cap V_f\not=\emptyset} p(x,y) \Bigl( \psi(\eta(y))-\psi(\eta(x)) \Bigr) f(\eta)\ d\bar{\mu}(\eta) \\
&=&   \int\sum_{x,y\in \X_n,\{x,y\}\cap V_f\not=\emptyset} 
(p(x,y) - p(y,x)) \psi(\eta(y)) f(\eta)\ d\bar{\mu}(\eta) .
\end{eqnarray*}
For every $n$ such that $V_f \subset \X_n $,
\begin{eqnarray*}
 \int \L_n f (\eta)\ d\bar{\mu}(\eta) &=&
 \sum_{y\in  V_f} \sum_{x\in \X_n} \bigl(p(x,y)-p(y,x)\bigr)  \int \psi(\eta(y)) f(\eta)\ d\bar{\mu} (\eta)
 \\ &\makebox{} & + 
  \sum_{y\in\X_n\setminus V_f} \sum_{x\in V_f}  \bigl(p(x,y)-p(y,x)\bigr)  
 \Bigl( \sum_{\a\in\N} \psi(\a) \mu(\a) \Bigr)\int f(\eta)\ d\bar{\mu} (\eta)\\
  &=&
   \sum_{y\in  V_f} \sum_{x\in \X_n} \bigl(p(x,y)-p(y,x)\bigr)  \int \psi(\eta(y)) f(\eta)\ d\bar{\mu} (\eta)
 \\ &\makebox{} & -
  \sum_{y\in V_f} \sum_{x\in \X_n\setminus V_f}  \bigl(p(x,y)-p(y,x)\bigr)  
 \Bigl( \sum_{\a\in\N} \psi(\a) \mu(\a) \Bigr)\int f(\eta)\ d\bar{\mu} (\eta)\\
  &=&
 \sum_{y\in  V_f}   \sum_{x\in \X_n} \bigl(p(x,y)-p(y,x)\bigr)  \int \Bigl(\psi(\eta(y)) - \sum_{\a\in\N} \psi(\a) \mu(\a) \Bigr)  f(\eta)\ d\bar{\mu} (\eta)\\
  &=&
 \sum_{y\in  V_f}   \sum_{x\not\in \X_n} \bigl(p(y,x)-p(x,y)\bigr)  \int \Bigl(\psi(\eta(y)) - \sum_{\a\in\N} \psi(\a) \mu(\a) \Bigr)  f(\eta)\ d\bar{\mu} (\eta).
\end{eqnarray*}
 In the first equality, we have split the sum on $y$ in two terms and used the fact that $\bar \mu$ is a product measure. In the second equality,
 we have exchanged the roles of $x$ and $y$ in the second term. The third equality uses that
\begin{equation*}
  \sum\limits_{y\in V_f}\sum\limits_{x\in V_f}  (p(x,y) -  p(y,x)) = 0.
\end{equation*}
In the fourth equality, we used the fact that $p(\cdot,\cdot)$ is bistochastic, 
\begin{equation*}
  \sum\limits_{x\in \X_n} (p(x,y) -  p(y,x)) =  \sum\limits_{x\not\in \X_n} (p(y,x) -  p(x,y) ).
\end{equation*}
Thus, using \eqref{psif2}, we have 
\begin{eqnarray*}
 \Bigl| \int \L_n f (\eta)\ d\bar{\mu}(\eta)\Bigr| \le  2 \sum_{y\in  V_f} \Bigl|\sum_{x\not\in \X_n}  \bigl(p(x,y) -  p(y,x)\bigr) \Bigr| m_f   \sum_{\a\in\N} |\psi(\a)| \mu(\a)
\end{eqnarray*} 
which converges to 0 when $n\to+\infty$.\\
Thus equation \eqref{inv} is satisfied.
\end{proof}
\smallskip 

\begin{proof}\textit{ of Remark \ref{rk:alternative-ax}.}
Define, for $x\in\X$,   $a^*_x=a_x/f^{-1}(\|x\|_1)$ where $f^{-1}$ is 
the inverse function of $f$ and 
 $\|x\|_1=\sum_{i=1}^d |x_i|$. Define, for all $x\in\X,k\in\N$,
the events 
$$A_x=\{\eta(x)\geq f^{-1}(\|x\|_1)\} = \{ \eta(x)a^*_x\geq a_x\},\quad 
B_k=\{\eta(0)\geq f^{-1}(k)\}.$$
 Since $\bar{\mu}$ is translation-invariant, 
$\bar{\mu}(A_x)  = \bar{\mu}\left(\eta(0)\geq f^{-1}(\|x\|_1)\right) = \bar{\mu}(B_k)$ 
whenever  $\|x\|_1 =k$. Then we have
$$ \sum_{k\in\N} \bar{\mu}(B_k)  = \sum_{k\in\N} \bar{\mu}(\eta(0)\geq f^{-1}(k) )
= \sum_{k\in\N} \bar{\mu}(f(\eta(0))\geq k)
=\sum_{n\in\N} f(n) \mu(n) <\infty.$$
It follows from  Borel-Cantelli Lemma  
that $\bar{\mu}(B_k \ \text{holds for infinitely many } k\in\N)=0$. 
Therefore also $\bar{\mu}(A_x \ \text{holds for at most finitely many } x\in\Zd) =1$. 
But it means that \\
 $\bar{\mu}(\sum_{x\in\Zd} \eta(x)a^*_x <\infty)=1$. 
\end{proof}
\begin{proof} \textit{of Corollary \ref{cor:main-alternative}}.
Note first that in the proof of Lemma \ref{lem:properties-Amu-muA},
 replacing each bound involving the first moment 
of $\mu$ by the bound \eqref{assum-alternative} yields the results of the lemma.\par 
We now explain where and how to modify the proof of Theorem \ref{th:main2}
to obtain Corollary \ref{cor:main-alternative} under assumptions (\ref{A1})--(\ref{assum-alternative}).
By assumption (\ref{A1}), the first remark in Step 1 is useless. The first equality in
\eqref{inv} comes from assumption (\ref{A1}); to derive the second one
thanks to assumption (\ref{assum-alternative}),
we need to go into the proofs of Subsections \ref{ex1}, \ref{ex4}.

Let $f$ be a bounded cylinder function, where $V_f$ denotes the support of $f$ and $m_f$ 
an upper bound for $|f|$. Let $\eta\in\N^X$.
Using \eqref{assum-alternative}, we replace the computation \eqref{eq:bound1} done in the proof of
Lemma \ref{bound1} by
\begin{eqnarray}\nonumber
|{\L}_{n} f(\eta) | \!&\!\leq \!&\!  \sum_{x,y\in\X_n,x\not=y,\{x,y\}\cap V_f\not=\emptyset} 
 p(x,y) \sum_{k=1}^{\eta(x)} g^k_{\eta(x),\eta(y)} |f(\etb)-f(\eta) |  \\ \nonumber
 &\leq& 2m_f \sum_{x,y\in\X_n,x\not=y,\{x,y\}\cap V_f\not=\emptyset} 
 p(x,y) \sum_{k=1}^{\eta(x)} g^k_{\eta(x),\eta(y)}  \\
 &\leq& 2m_f\,C\,|V_f|.
 \label{eq:bound1-bis}
\end{eqnarray}
This computation \eqref{eq:bound1-bis} enables to adapt 
Lemmas \ref{bound1}, \ref{an2.1}, \ref{an2.4}, \ref{L-n-conv},
and \eqref{bound-to-generator} in Proposition \ref{prop:generator}, thus 
we obtain \eqref{inv}.

Then by assumption (\ref{assum-alternative}), the two terms considered in
\eqref{eq:2sommes-cv} are bounded directly by $m_f$, which enables to do Step 3.
No other modification is needed.
\end{proof} 
\begin{proof}\textit{of Corollary \ref{family}}.
For every $\varphi <\varphi_c$,  and for $\varphi =\varphi_c$ 
if $\sum_{n\ge 0} n \vfi_c^n {\mu}(n) <\infty $, if 
the rates of the MMP
  satisfy assumption \eqref{un-assum} and the
single site marginal $\mu$ of $\bar{\mu}$ has a finite first moment,
then  the measure  $\bar{\mu}_\varphi$ has a finite first moment,
and Theorem \ref{th:main2} 
may be applied to $\bar{\mu}_\varphi$. 

For every $\varphi <\varphi_c$,  and for $\varphi =\varphi_c$
if  $\sum_{n\ge 0} n \vfi_c^n {\mu}(n)=+\infty $ with  $Z_{\varphi_c}<\infty$, 
if the rates of the MMP
  satisfy assumptions (\ref{A1})--(\ref{assum-alternative}), then
   Corollary \ref{cor:main-alternative}
may be applied to $\bar{\mu}_\varphi$.

It follows from the definition \eqref{muf} of $\bar{\mu}_\varphi$ that  
the quantities $\mathbf{A}(.,.)$ defined in  
\eqref{A-def2}
coincide for both $\bar{\mu}$ and  $\bar{\mu}_\varphi$  for all $a,\b\in\N$.
Since the necessary and sufficient conditions given in 
Theorem \ref{th:main2} are expressed through these quantities, 
they are thus satisfied for $\bar{\mu}_\varphi$ as soon as they are satisfied for $\bar{\mu}$.
If $\bar{\mu}$ is invariant for the MMP, then these necessary and sufficient conditions 
are satisfied.

Thus, by Theorem \ref{th:main2} or by Corollary \ref{cor:main-alternative} for $\varphi<\varphi_c$,
and, for $\varphi =\varphi_c$, either by Theorem \ref{th:main2} 
if  $\sum_{n\ge 0} n \vfi_c^n {\mu}(n) <\infty $ under assumption \eqref{un-assum},
or by Corollary \ref{cor:main-alternative} if
 $\sum_{n\ge 0} n \vfi_c^n {\mu}(n)=+\infty $ with  $Z_{\varphi_c}<\infty$
 under assumptions (\ref{A1})--(\ref{assum-alternative}),
 we have that $\bar{\mu}_{\varphi}$ is also invariant for the process.
\end{proof}
%
\subsection{Proofs for Subsection \ref{subsec:Applications}}
\begin{proof} \textit{of Proposition \ref{gzrp}}.
For MM-ZRP, we have from \eqref{A-def2}, 
$$
\mathbf{A}(\a,\b) = \sum_{k\leq \b}\frac{\mu(\a+k)\mu(\b-k)}{\mu(\a)\mu(\b)} g^k_{\a+k} - \sum_{k\leq \a}g^k_{\a} 
\quad \text{ and }\ \mathbf{A}(\a,0) = - \sum_{k\leq \a}g^k_{\a}. $$
Then the invariance condition \eqref{inv-psi2} writes,  using \eqref{inv-psi-bis}, 
\begin{equation}\label{inf-psi-d}   
\sum_{k\leq \b}\frac{\mu(\a+k)\mu(\b-k)}{\mu(\a)\mu(\b)} g^k_{\a+k} = \sum_{k\leq \b}g^k_{\b}    \qquad \forall \a,\b\geq 0.
\end{equation}
To prove that condition \eqref{inv-s} implies \eqref{inf-psi-d} is straightforward. 
For the  converse, let us denote $\phi_{\a}^k=g_{\a}^k\mu(\a)$, so \eqref{inf-psi-d} writes  
\begin{equation}\label{inv-s-fi} 
\sum_{k\leq \b} \mu(\b-k)\phi^k_{\a+k} =\mu(\a)\sum_{k\leq \b}\phi^k_{\b} \qquad \forall \a,\b\geq 0.
\end{equation}
Taking $\b=1$  in \eqref{inv-s-fi} gives
\begin{equation}\label{inv-s-fi-1} 
\phi_{\a+1}^1= \frac{\mu(\a)}{\mu(0)}\phi_1^1\quad\text{ for }\a\geq 1.
\end{equation}
  We prove by induction on $(n,i)$, with $0< i <n$, that we have
\begin{equation}\label{indukce} 
\phi^{n-i}_{n} = \frac{\mu(i)}{\mu(0)}\phi_{n-i}^{n-i} \qquad \text{for all }\ 0< i <n,\,n\geq 2,
\end{equation} 
that is, \eqref{inv-s} after the change of variables $n=\a+k,\,i=\a$.
 To initiate the induction for $n=2,i=1$,
we choose  $\b=\a=1$ in \eqref{inv-s-fi}. 
To complete the induction with $n>2,i=n-1$, we choose $\a=n-1$ in \eqref{inv-s-fi-1}.

For the induction step, let us fix $n>2,\,0< i <n$ and assume that \eqref{indukce}
holds for all $(n',i')$ such that $2\leq n'< n $ and $0< i'< n'$. 
Use \eqref{inv-s-fi}  for $\a = i$, $\b=n-i$ (that is, $\a+\b=n$). 
We obtain
$$ \sum_{k=1}^{n-i} \mu(n-i-k) \phi_{k+i}^k = \mu(i) \sum_{k=1}^{n-i} \phi_{n-i}^k\,. $$
After a change of variable $l=n-i-k$, we have 
\begin{equation}\label{eq:almost-ind} 
\mu(0) \phi_{n}^{n-i} + \sum_{l=1}^{n-i-1} \mu(l) \phi_{n-l}^{n-i-l} 
= \mu(i) \sum_{l=0}^{n-i-1} \phi_{n-i}^{n-i-l}\,.   
\end{equation}
Using the induction assumption for each term in the sum of the l.h.s. of \eqref{eq:almost-ind} yields 
%
$$ \mu(0) \phi_{n}^{n-i} = \mu(i) \phi_{n-i}^{n-i}\,. $$  
The induction is proved. 
\end{proof}
\smallskip
\begin{proof}\textit{of Theorem \ref{target}.} \\
We apply Theorem \ref{th:main2}  with Corollary \ref{cor:main-alternative}. 
  The product measure $\bar{\mu}$ 
is invariant for the MM-TP if and only if Conditions \eqref{inv-sym22} and \eqref{inv-psi2}
are satisfied. Formula \eqref{A-def2}  writes
\begin{equation} \label{A-target}
\mathbf{A}(\a,\b) = \sum_{k\le\b}\frac{\mu(\a+k)\mu(\b-k)}{\mu(\a)\mu(\b)} g^k_{*,\b-k}
 - \sum_{k\le\a}g^k_{*,\b},\qquad \text{for all } \a\geq 0,\b\geq 0.
\end{equation}
 Using Remark \ref{Aaa}, we set $\psi(0)=0$ and  equation \eqref{inv-psi-bis} reads
\begin{equation} \label{def-psi}
\psi(\a)=-\mathbf{A}(\a,0)= \sum_{k\le \a} g_{*,0}^k \ \text{for } \a>0.
\end{equation}
Thus the set of equations \eqref{inv-psi2}  is equivalent to
\begin{eqnarray} \label{inf-psi-t}  
 \mathbf{A}(0,\b) 
  &=&  \psi(\b)  \qquad \forall \b\geq 0 \ \ \& \\
  \mathbf{A}(\a,\b) - \mathbf{A}(\a-1,\b)  &=& \psi(\a-1) 
  - \psi(\a)  \qquad \forall \a>0, \b\geq 0.  \label{tp:grec0}
\end{eqnarray} 
We first consider equations (\ref{tp:grec0}).  Using \eqref{A-target}, \eqref{def-psi} 
 and notation \eqref{tp:Delta},  they can be written as,
for all $\a>0,\b>0$,
\begin{equation}\label{tp:rec}
g_{*,\beta}^\alpha = g_{*,0}^\alpha +\sum_{k=1}^{\beta} \frac{\mu(\beta-k)}{\mu(\beta)}
\Delta_\a(k) g_{*,\beta-k}^k \, .
\end{equation}
For $\b=1$, it reads 
\begin{eqnarray}\label{tp:rec1}
g_{*,1}^\alpha &=& g_{*,0}^\alpha + \frac{\mu(0)}{\mu(1)}\Delta_\a(1) g_{*,0}^1 
=g_{*,0}^\alpha + \frac{\mu(0)}{\mu(1)}
\Big(\frac{\mu(\a+1)}{\mu(\a)}-\frac{\mu(\a)}{\mu(\a-1)}\Big) g_{*,0}^1  \\
&=& g_{*,0}^\alpha + \frac{1}{\mu(1)} H_\alpha(1,1)g_{*,0}^1 \nonumber
\end{eqnarray}
which is equation \eqref{inv-tt} for $\b=1$,  using Definition \eqref{tp:hrec}.
 We now prove by induction on $\b\ge 1$ that equation \eqref{inv-tt} is satisfied.
 Suppose that the rates fulfill equations \eqref{tp:hrec} for all values 
of $\b$ up to some value $\tilde \b -1\ge 1$.  We derive 
\eqref{inv-tt} for $\b = \tilde \b$ as follows.  We have 
\begin{eqnarray}
g_{*, \tilde\b}^\a 
&=& g_{*,0}^\a
+\frac{\mu(0)}{\mu( \tilde\b)}\Delta_\a( \tilde\b)g_{*,0}^{ \tilde\b}  
+\sum_{k=1}^{ \tilde\b-1} \frac{\mu( \tilde\b-k)}{\mu( \tilde\b)}
\Delta_\a(k) g_{*, \tilde\b-k}^k \nonumber\\
&=& g_{*,0}^\alpha 
+\frac{\mu(0)}{\mu( \tilde\b)}\Delta_\a( \tilde\b)g_{*,0}^{ \tilde\b} 
 +\sum_{k=1}^{ \tilde\b-1} \frac{\mu( \tilde\b-k)}{\mu( \tilde\b)}
\Delta_\a(k) \Bigl( g_{*,0}^k 
+ \frac{1}{\mu( \tilde\b - k)} \sum_{l=1}^{\tilde\b -k} H_k(\tilde \b - k,l) g_{*,0}^l\Bigr) \nonumber\\
&=& g_{*,0}^\a +\frac{\mu(0)}{\mu( \tilde\b)}\Delta_\a( \tilde\beta)g_{*,0}^{ \tilde\beta} 
+\sum_{k=1}^{ \tilde\b-1} \frac{\mu( \tilde\b-k)}{\mu( \tilde\beta)} \Delta_\a(k) g_{*,0}^k
+\frac{1}{\mu( \tilde\b)}
\sum_{l=1}^{\tilde\beta -1} \sum_{k=1}^{ \tilde\b-l} 
\Delta_\a(k) H_k(\tilde \b - k,l) g_{*,0}^l \nonumber\\
&=& g_{*,0}^\a +\frac{\mu(0)}{\mu( \tilde\beta)}\Delta_\a( \tilde\beta)g_{*,0}^{ \tilde\beta} 
+\frac{1}{\mu( \tilde\b)}\sum_{k=1}^{ \tilde\b-1}\Bigl( \Delta_\a(k)  \mu( \tilde\b-k)
+\sum_{l=1}^{ \tilde\b-k} \Delta_\a(l) H_k(\tilde \b - l,k) \Bigr)g_{*,0}^k\nonumber\\
&=& g_{*,0}^\a +\frac{1}{\mu( \tilde\b)}
\sum_{k=1}^{ \tilde\b} H_\a( \tilde \b,k ) \;g_{*,0}^k\nonumber
\label{tp:rec2}\end{eqnarray}
 where we used \eqref{tp:rec} for the first equality and the induction hypothesis
for the second one, we exchanged the order of the two sums 
then the names of indexes in the third and fourth ones, and we used 
 Definition \eqref{tp:hrec} to conclude.  \par
Conversely, suppose that equations \eqref{inv-tt} are valid. We now prove that
equations\eqref{tp:grec0} or equivalently  equations \eqref{tp:rec}  are verified.
We have
\begin{eqnarray}
g_{*, \b}^\a 
&=& g_{*, 0}^\a + \frac{1}{\mu(\b)}\sum_{k=1}^\b H_\a(\b,l) g_{*, 0}^k\nonumber\\
&=& g_{*, 0}^\a + \frac{1}{\mu(\b)} H_\a(\b,\b) g_{*, 0}^\b
+ \frac{1}{\mu(\b)}\sum_{k=1}^{\b-1} \Bigl( \Delta_\a(k)\mu(\b-k)
+\sum_{l=1}^{\b-k}  \Delta_\a(l) H_l(\b-l,k)\Bigr) g_{*, 0}^k
\nonumber\\
&=& g_{*, 0}^\a + \frac{1}{\mu(\b)} \Bigl(H_\a(\b,\b) g_{*, 0}^\b
+ \sum_{k=1}^{\b-1} \Delta_\a(k)\mu(\b-k) g_{*, 0}^k
+\sum_{k=1}^{\b-1} \Delta_\a(k) \sum_{l=1}^{\b-k} H_k(\b-k,l) g_{*, 0}^l\Bigr)
\nonumber\\
&=& g_{*, 0}^\a + \frac{1}{\mu(\b)}\Bigl(\Delta_\a(\b)\mu(0) g_{*, 0}^\b
+ \sum_{k=1}^{\b-1} \Delta_\a(k)\mu(\b-k) g_{*, \b-k}^\a\Bigr)
\nonumber\\
&=& g_{*, 0}^\a 
+ \sum_{k=1}^{\b} \Delta_\a(k)\mu(\b-k) g_{*, \b-k}^\a\nonumber
\label{tp:rec3}
\end{eqnarray}
 \par  
 where we used  Definitions \eqref{tp:hrec} for the second and fourth equalities,
and we exchanged the order of the two sums 
then the names of indexes in the third  one. \par
\smallskip
It remains to show that solutions of  \eqref{inv-tt} also verify \eqref{inf-psi-t}.    
We do it in Lemma \ref{lemma:gtp4} below which finishes this proof.
\end{proof}
 We need two preparatory lemmas to derive Lemma \ref{lemma:gtp4}. 
\begin{lemma}\label{lem:4.2}     
 The coefficients $H_\alpha(\beta,k)$ solution of \eqref{tp:hrec} 
 are also solution of the following (dual)
recurrence relation 
\begin{equation}\label{tp:hrec2}
\begin{cases} H_\alpha(\beta,\beta) =   \Delta_\alpha(\beta) \mu(0) 
 \ \ \text{ for } \beta\ge 1,  \\[0.1cm]
H_\alpha(\beta,k) =  \Delta_\alpha( k) \mu(\beta-k)
+\sum_{l=1}^{\beta-k}  H_\alpha(\beta-k,l)  \Delta_l(k)  \  \  
\text{ for }  \beta\ge 2,\,1\le k\le \beta-1. \end{cases}
\end{equation} 
\end{lemma}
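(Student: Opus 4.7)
The plan is to exploit the explicit closed-form expression \eqref{tp:hexp} and to check that it satisfies \emph{both} recurrences simultaneously. Since the original recurrence \eqref{tp:hrec}, together with the boundary condition $H_\alpha(\beta,\beta)=\Delta_\alpha(\beta)\mu(0)$, uniquely determines $H_\alpha(\beta,k)$ by strong induction on $\beta$ (each $H_\alpha(\beta,k)$ with $k<\beta$ being expressed in terms of values $H_l(\beta-l,k)$ with strictly smaller first argument), it is enough to show that the formula in \eqref{tp:hexp} solves both \eqref{tp:hrec} and \eqref{tp:hrec2}; the two recurrences then necessarily share the same solution.

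For the original recurrence \eqref{tp:hrec}, I would partition the chains $(k_1,\ldots,k_r)$ appearing in \eqref{tp:hexp} according to the value $k_1=l$ of their leftmost index. Factoring out $\Delta_\alpha(l)$, the remaining sum over chains $(k_2,\ldots,k_r)$ with $k_2,\ldots,k_r\ge 1$ and $k_2+\cdots+k_r\le\beta-l-k$, multiplied by the trailing $\Delta_{k_r}(k)$ and the mass $\mu(\beta-l-k-\sum_{j\ge 2}k_j)$, reconstructs exactly $H_l(\beta-l,k)$ via \eqref{tp:hexp} (with $\alpha$ replaced by $l$ and $\beta$ replaced by $\beta-l$). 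The $r=1$ contribution isolates the leading term $\Delta_l(k)\mu(\beta-l-k)$ of $H_l(\beta-l,k)$, and the $r=0$ contribution $\Delta_\alpha(k)\mu(\beta-k)$ in \eqref{tp:hexp} matches the leading term in \eqref{tp:hrec}. By uniqueness, this identifies \eqref{tp:hexp} as the actual solution of \eqref{tp:hrec}.

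For the dual recurrence \eqref{tp:hrec2}, the symmetric argument applies: partition the chains in \eqref{tp:hexp} according to the value $k_r=l$ of their rightmost index. The trailing factor $\Delta_{k_r}(k)=\Delta_l(k)$ comes out as a right multiplier, and the truncated chains $(k_1,\ldots,k_{r-1})$, together with $\Delta_\alpha(k_1)$, the interior product $\Delta_{k_1}(k_2)\cdots\Delta_{k_{r-2}}(k_{r-1})$, the closing factor $\Delta_{k_{r-1}}(l)$ and the mass $\mu((\beta-k)-l-\sum_{j=1}^{r-1}k_j)$, reconstruct $H_\alpha(\beta-k,l)$ via \eqref{tp:hexp} with $\beta\mapsto\beta-k$ and $k\mapsto l$. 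Once again the $r=0$ term yields the leading $\Delta_\alpha(k)\mu(\beta-k)$ in \eqref{tp:hrec2}.

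The only delicate step is the bookkeeping of summation ranges: one must verify that the constraints $k_j\ge 1$ and $\sum_j k_j\le\beta-k$ in \eqref{tp:hexp}, when projected onto the extremal index $k_1$ (resp.\ $k_r$), biject with the outer range $1\le l\le\beta-k$ of the sums in \eqref{tp:hrec} (resp.\ \eqref{tp:hrec2}), and that no chain is double-counted or missed. This is routine combinatorics but is the one place where a misaligned index shift could derail the argument; with this check in place, the two recurrences produce the same sequence, which is the conclusion of the lemma.
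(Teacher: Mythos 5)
Your proposal is correct, but it follows a genuinely different route from the paper. The paper never uses the closed formula \eqref{tp:hexp} (which it only presents as a guess): its proof is a double induction on $(\beta,k)$ carried out directly on the recurrences, initialized by computing $H_\alpha(\beta,\beta-1)$ with the commutation identity $\Delta_\alpha(m)H_m(\beta,\beta)=\Delta_m(\beta)H_\alpha(m,m)$, and completed by expanding $H_\alpha(\beta-k,l)$ via \eqref{tp:hrec}, exchanging the order of summation, invoking the induction hypothesis for the smaller first argument $\beta-l$, and reassembling with \eqref{tp:hrec} in the reverse direction. You instead observe that \eqref{tp:hrec} together with its boundary case determines $H$ uniquely (strong induction on the first argument, simultaneously in $\alpha$), verify that the chain expansion \eqref{tp:hexp} solves \eqref{tp:hrec} by peeling off the leftmost link $k_1=l$, and then obtain \eqref{tp:hrec2} by peeling off the rightmost link $k_r=l$; the range bookkeeping you flag does work out, including the degenerate endpoints $l=\beta-k$ where the chain sum is empty and only the boundary term $\Delta_\cdot(\cdot)\mu(0)$ survives. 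Your route buys a rigorous proof of \eqref{tp:hexp} (left unproved in the paper) and makes the left/right ``duality'' of the two recurrences structurally transparent, at the cost of manipulating the multi-index sum; the paper's induction is more compact in notation and self-contained, but less explanatory about where the dual recurrence comes from. Both arguments are valid proofs of the lemma.
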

\begin{proof}
We do an induction on $(\beta,k)$, with $1\le k\le \beta-1$. 
Notice first that for all integers $\beta,m$, we have, by \eqref{tp:hrec},
\begin{equation}\label{tp:dual0}
\Delta_\alpha(m) H_m(\beta,\beta) = \Delta_\alpha(m) \Delta_m(\beta) \mu(0) 
=  \Delta_m(\beta) H_\alpha(m,m) .
\end{equation}
To initiate the induction with $\beta=2,k=1$, 
and to complete it for $k=\beta-1,\beta>2$, we compute, 
for $\beta\ge 2$, using \eqref{tp:hrec}, \eqref{tp:dual0}:
\[
H_\alpha(\beta,\beta-1) = \Delta_\alpha(\beta-1) \mu(1) + \Delta_\alpha(1) H_1(\beta-1,\beta-1)
=\Delta_\alpha(\beta-1) \mu(1) +\Delta_1(\beta-1) H_\alpha(1,1) 
\]
which is the expression of $H_\alpha(\beta,\beta-1)$ given by  \eqref{tp:hrec2}. 
Then we fix $1\le k<\beta-1$, 
and we assume  \eqref{tp:hrec2} satisfied
for all  $(\beta', k')$ such 
that $\beta'< \beta$ and $1\le k'< \beta'-1$.  We have
\begin{eqnarray*}
&& \Delta_\alpha( k) \mu(\beta-k)+\sum_{l=1}^{\beta-k}  H_\alpha(\beta-k,l)  \Delta_l(k)\\
&&  = \Delta_\alpha( k) \mu(\beta-k)+\sum_{l=1}^{\beta-k-1} \Bigl( \Delta_\alpha( l) \mu(\beta-l-k)
        +  \sum_{m=1}^{\beta-l-k}  \Delta_\alpha( m) H_m(\beta-k-m,l)\Bigr)  \Delta_l(k)\\ 
        &&\quad +H_\alpha(\beta-k,\beta-k)\Delta_{\beta-k}(k) \\
&&  = \Delta_\alpha( k) \mu(\beta-k)  
+ \sum_{l=1}^{\beta-k-1}  \Delta_\alpha( l)\Bigl(  \Delta_l( k) \mu(\beta-l-k)
        + \sum_{m=1}^{\beta-l-k}  H_l(\beta-l-k,m) \Delta_m(k)\Bigr)\\ 
        &&\quad +\mu(0)\Delta_\alpha(\beta-k)\Delta_{\beta-k}(k)\\
&&  = \Delta_\alpha( k) \mu(\beta-k)+ \sum_{l=1}^{\beta-k}   \Delta_\alpha(l) H_l(\beta-l,k) \\
&&  =  H_\alpha(\beta,k)
\end{eqnarray*} 
where we applied first \eqref{tp:hrec} to $H_\alpha(\beta-k,l)$ 
to get the first equality, exchanged the order of the two sums 
then the names of indexes to get the second one, applied the induction hypothesis with 
$\beta'= \beta-l < \beta$ to get the third one and finally applied \eqref{tp:hrec} in the other direction
to conclude.
\end{proof}
\begin{lemma}\label{lem:tp:sum} 
The function defined by $\psi(\beta,k) = \sum_{l=1}^{\beta - k} \mu(l) H_l(\beta -l , k)$ 
for $\beta>1$ and  $k< \beta$ satisfies
\begin{equation}\label{tp:sum}
\psi(\beta,k) = \sum_{l=1}^{\beta - k} \mu(l) H_l(\beta -l , k) = \mu(0)\mu(\beta) -\mu(k)\mu(\beta -k).
\end{equation}
\end{lemma}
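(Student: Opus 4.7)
The plan is to proceed by strong induction on $n := \beta - k \ge 1$, using the dual recurrence \eqref{tp:hrec2} established in Lemma \ref{lem:4.2}. The key structural observation is that while applying \eqref{tp:hrec} directly would leave the summation index $l$ entangled inside the $\Delta_\alpha(l)$ factor, the dual form \eqref{tp:hrec2} places the summation index inside $\Delta_m(k)$, which factors out of the outer sum over $l$ and turns the resulting double sum into a copy of $\psi$ with a strictly smaller first argument.

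Concretely, I apply \eqref{tp:hrec2} to each $H_l(\beta-l, k)$ for $1 \le l < n$, and use the explicit boundary value $H_n(k,k) = \Delta_n(k)\mu(0)$ for the $l = n$ term. Exchanging the order of summation in the residual double sum and recognizing $\sum_{l=1}^{n-m} \mu(l) H_l(n-l, m) = \psi(n,m)$ for $1 \le m \le n-1$, the identity
\[
\psi(\beta,k) \;=\; \sum_{l=1}^{n} \mu(l)\,\Delta_l(k)\,\mu(n-l) \;+\; \sum_{m=1}^{n-1} \Delta_m(k)\,\psi(n,m)
\]
should emerge after routine bookkeeping.

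At this point the induction hypothesis $\psi(n,m) = \mu(0)\mu(n) - \mu(m)\mu(n-m)$ is applicable because $n - m < n$. The $-\mu(m)\mu(n-m)\,\Delta_m(k)$ contributions cancel termwise against the $l=m$ summands of the first sum for $m=1,\dots,n-1$, leaving only the $l=n$ term $\mu(0)\mu(n)\Delta_n(k)$ from the first sum together with $\mu(0)\mu(n)\sum_{m=1}^{n-1}\Delta_m(k)$ from the second. Combining these gives $\mu(0)\mu(n)\sum_{m=1}^{n}\Delta_m(k)$, and definition \eqref{tp:Delta} makes this last sum telescope to $\mu(\beta)/\mu(n) - \mu(k)/\mu(0)$. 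Multiplying by $\mu(0)\mu(n)$ yields the claimed $\mu(0)\mu(\beta)-\mu(k)\mu(\beta-k)$.

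The base case $n=1$ falls out of exactly the same computation with a vacuous induction hypothesis (the double sum is empty and the first sum reduces to $\mu(1)\Delta_1(k)\mu(0)$). The main obstacle I anticipate is purely combinatorial: correctly identifying that swapping the order of summation converts the inner double sum into $\psi(n,m)$ on the smaller scale, which is what makes the induction close. Once the right recurrence is chosen, everything else is cancellation and telescoping.
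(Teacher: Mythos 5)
Your proposal is correct and is essentially the paper's own argument: the same use of the dual recurrence \eqref{tp:hrec2} from Lemma \ref{lem:4.2} (with the boundary value for the $l=\beta-k$ term), the same exchange of summation order to recognize $\psi(\beta-k,\cdot)$, the same cancellation against the induction hypothesis, and the same telescoping of $\sum_m\Delta_m(k)$ via \eqref{tp:Delta}. The only difference is cosmetic: you organize the induction by the difference $n=\beta-k$, while the paper phrases it as an induction on $(\beta,k)$ with base case $k=\beta-1$; both orderings close in the same way.
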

\begin{proof}
We do an induction on $(\beta,k)$, with $1\le k\le \beta-1$.
To initiate the induction with $\beta=2,k=1$, and to complete it 
for $k=\beta-1,\beta>2$, we compute, for $\beta\ge 2$, using \eqref{tp:hrec}, \eqref{tp:Delta}:
$$
\psi(\beta,\beta-1) = \mu(1) H_1(\beta-1 , \beta-1) = \mu(1) \mu(0) \Delta_1(\beta-1)
= \mu(1) \mu(0) \left(\frac{\mu(\beta)}{\mu(1)}-\frac{\mu(\beta-1)}{\mu(0)}\right).
$$
Then we fix $1\le k<\beta-1$, 
and we assume  \eqref{tp:sum} satisfied
for all  $(\beta', k')$ such 
that $\beta'< \beta$ and $1\le k'< \beta'-1$.  We have
\begin{eqnarray*}
\psi(\beta,k) 
              &=& \sum_{l=1}^{\beta - k-1} \mu(l) \Bigl(\Delta_l( k) \mu(\beta-l-k)
                   +\sum_{m=1}^{\beta-l-k}  H_l(\beta-l-k,m)  \Delta_m(k)\Bigr)\\&&\quad 
                   +\mu(\beta - k) H_{\beta - k}(k , k)\\
              &=& \sum_{l=1}^{\beta - k-1} \Bigl( \mu(l)  \mu(\beta-l-k)
                   +\sum_{m=1}^{\beta-l-k} \mu(m)  H_m(\beta-m-k,l) \Bigr)  \Delta_l( k)\\&&\quad 
                   +\mu(\beta - k) \Delta_{\beta - k}(k)\mu(0)\\
              &=& \sum_{l=1}^{\beta - k-1} \Bigl( \mu(l)  \mu(\beta-l-k)
               + \psi(\beta-k,l ) \Bigr)  \Delta_l( k) 
              +\mu(\beta - k) \Delta_{\beta - k}(k)\mu(0)\\
              &=& \sum_{l=1}^{\beta - k} \mu(0)  \mu(\beta-k)  \Delta_l( k) \\
&=&  \mu(0)  \mu(\beta-k) \Bigl(\frac{\mu(\beta)}{\mu(\beta-k)}-\frac{\mu(k)}{\mu(0)}\Bigr)\\
&=& \mu(0)\mu(\beta) -\mu(k)\mu(\beta -k)
\end{eqnarray*}
where we applied \eqref{tp:hrec2} to get the first equality, 
we exchanged the order of  sums then the names of indexes to get 
the second one, we used the definition of $\psi(.,.)$ to get the third one,  
we applied the induction hypothesis for the fourth one
and \eqref{tp:Delta} for the fifth one.
\end{proof}

Now we are ready to show that
\begin{lemma} \label{lemma:gtp4}
Equation \eqref{inf-psi-t} holds with rates 
$ g_{*,\beta}^\alpha$ as in \eqref {inv-tt}--\eqref {tp:hrec}.
\end{lemma}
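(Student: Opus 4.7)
\textit{(Plan for Lemma \ref{lemma:gtp4}).}
The plan is to evaluate $\mathbf{A}(0,\beta)$ directly from \eqref{A-target}, substitute the expression for the rates given by \eqref{inv-tt}, and reduce the resulting double sum to $\psi(\beta)$ by invoking Lemma~\ref{lem:tp:sum}.

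First, specialising \eqref{A-target} to $\alpha=0$, I note that the second sum is empty (since $k\ge 1$), so
\begin{equation*}
\mathbf{A}(0,\beta) \;=\; \sum_{k=1}^{\beta}\frac{\mu(k)\mu(\beta-k)}{\mu(0)\mu(\beta)}\, g^{k}_{*,\beta-k}.
\end{equation*}
Separating the $k=\beta$ term (for which $\beta-k=0$, so \eqref{inv-tt} gives simply $g^{\beta}_{*,0}$) from the terms with $1\le k\le \beta-1$ (where \eqref{inv-tt} expresses $g^{k}_{*,\beta-k}$ in terms of $g^{l}_{*,0}$ for $1\le l\le \beta-k$), I obtain
\begin{equation*}
\mathbf{A}(0,\beta) \;=\; g^{\beta}_{*,0} \;+\; \sum_{k=1}^{\beta-1}\frac{\mu(k)\mu(\beta-k)}{\mu(0)\mu(\beta)}\, g^{k}_{*,0} \;+\; \sum_{k=1}^{\beta-1}\sum_{l=1}^{\beta-k}\frac{\mu(k)}{\mu(0)\mu(\beta)}\, H_{k}(\beta-k,l)\, g^{l}_{*,0}.
\end{equation*}

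The next step is to exchange the order of summation in the double sum and collect terms by $l$. Writing the coefficient of $g^{l}_{*,0}$ for each fixed $l\in\{1,\dots,\beta-1\}$, one gets
\begin{equation*}
\frac{1}{\mu(0)\mu(\beta)}\Bigl(\mu(l)\mu(\beta-l) \;+\; \sum_{k=1}^{\beta-l}\mu(k)\, H_{k}(\beta-k,l)\Bigr).
\end{equation*}
By Lemma~\ref{lem:tp:sum}, the inner sum equals $\mu(0)\mu(\beta) - \mu(l)\mu(\beta-l)$, so the bracket above collapses to $\mu(0)\mu(\beta)$, and the coefficient of each $g^{l}_{*,0}$ with $1\le l\le \beta-1$ is exactly $1$. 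Combined with the isolated $g^{\beta}_{*,0}$ term, this yields
\begin{equation*}
\mathbf{A}(0,\beta) \;=\; \sum_{l=1}^{\beta} g^{l}_{*,0} \;=\; \psi(\beta),
\end{equation*}
which is precisely \eqref{inf-psi-t}. No real obstacle arises here: the only substantive ingredient is Lemma~\ref{lem:tp:sum}, and the rest is bookkeeping on the double sum.
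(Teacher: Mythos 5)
Your argument is correct and follows essentially the same route as the paper's proof: specialise \eqref{A-target} to $\alpha=0$, insert \eqref{inv-tt}, exchange the order of summation, and apply Lemma \ref{lem:tp:sum} to collapse the coefficient of each $g^{l}_{*,0}$ to $1$, giving $\mathbf{A}(0,\beta)=\sum_{k\le\beta}g^{k}_{*,0}=\psi(\beta)$ via \eqref{def-psi}. The only cosmetic difference is the bookkeeping of indices; the substance is identical.
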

\begin{proof} 
We have,  for $\b\ge 1$,
\begin{eqnarray}
\mathbf{A}(0,\b)  &=& \1_{[\b\ge 2]}\sum_{k=1}^{\beta-1} 
\frac{\mu(k) \mu(\beta-k)}{\mu(0)\mu(\beta)} g_{*,\beta-k}^k 
+\frac{\mu(\beta) \mu(0)}{\mu(0)\mu(\beta)} g_{*,0}^{\beta}\nonumber\\
&=&  \1_{[\b\ge 2]}\sum_{k=1}^{\beta-1} \frac{\mu(k) \mu(\beta-k)}{\mu(0)\mu(\beta)} \Bigl( g_{*,0}^k 
+ \frac{1}{\mu(\beta-k) }\sum_{l=1}^{\beta-k} H_k(\beta-k,l) g_{*,0}^l\Bigr)+g_{*,0}^{\beta}  \nonumber\\
&=& \1_{[\b\ge 2]} \frac{1}{\mu(0)\mu(\beta) }  \sum_{k=1}^{\beta-1} 
\Bigl( \mu(k) \mu(\beta-k) +\sum_{l=1}^{\beta-k} \mu(l) 
H_l(\beta-l,k)\Bigr) g_{*,0}^k +g_{*,0}^{\beta} \nonumber\\
&=&  \frac{1}{\mu(0)\mu(\beta) }  \sum_{k\le \beta} 
\Bigl( \mu(0)\mu(\beta)\Bigr) g_{*,0}^k  =   \sum_{k\le \beta}   g_{*,0}^k
=  - \mathbf{A}(\b,0)  \nonumber
\end{eqnarray} 
where we used \eqref{A-target} for the first equality, \eqref{inv-tt} for the second one, 
we exchanged the order of  sums then the names of indexes to get 
the third one, we applied Lemma \ref{lem:tp:sum}
for the fourth one and \eqref{def-psi} for the last one. 
\end{proof}
\begin{proof} \textit{of Proposition \ref{target-exact}.}
 Point (a) follows directly from Theorem \ref{target}. 
As a preliminary for point (b), take a MM-TP whose rates satisfy \eqref{assum} and \eqref{eq:weak-phil},
for which a product probability measure $\bar\mu\in{\mathcal{S}}$ with single-site marginal $\mu$
(with $\mu(\a)>0$ for all $\a\ge 0$)
is invariant. Then by  Theorem \ref{target},
 equations \eqref{inv-tt}--\eqref{tp:hrec}
are satisfied, so that for $\b=1,\a\ge 1$ we obtain again Equation \eqref{tp:rec1} that we may write as
\begin{equation}\label{eq:mu-it0-target}
\frac{\mu(\a+1)}{\mu(\a)} = \frac{\mu(\a)}{\mu(\a-1)}
 + \frac{(g^\a_{*,1} - g^\a_{*,0}) }{g^1_{*,0}} \frac{\mu(1)}{\mu(0)}
\end{equation}
which implies that
\begin{eqnarray}\label{eq:mu-it-target2}
\frac{\mu(2)}{\mu(1)} &=&  \frac{\mu(1)}{\mu(0)} \frac{g^1_{*,1}}{g^1_{*,0}}\qquad\&\\
\label{eq:mu-it-target-ge2}
\mu(\a+1) &=& \mu(\a) \frac{\mu(1)}{\mu(0)} \frac{1}{g^1_{*,0}}
 \left(g^1_{*,1}+\sum\limits_{i=2}^\a (g^i_{*,1} -  g^i_{*,0}) \right)\qquad\mbox{ for }\a\ge 2
\end{eqnarray} 
that is, using notations \eqref{tildefimu}, \eqref{ex5-w-def},
\begin{equation}\label{eq:mu-it0-target-mid}
\mu(\a)=\mu(0)(\tilde\varphi_\mu)^\a w(\a)\qquad\mbox{ for }\a\ge 0.
\end{equation} 
Defining, for $\a\ge 1,1\le k\le\b$, $\Delta^*_\a(\b)$ and $ H^*_\a(\b,k)$ by
\begin{equation}\label{eq:defH*D*}
\Delta_\a(k)=(\tilde\varphi_\mu)^k\Delta^*_\a(k),\qquad
H_\a(\b,k)=(\tilde\varphi_\mu)^\b w(\b)\mu(0) H^*_\a(\b,k)
\end{equation}
we have that $H^*_\a(\b,k)$ and $\Delta^*_\a(\b)$ satisfy \eqref{target-rates-c}
and \eqref{tp:Delta*}.\\

\noindent
We now come to point (b). We consider a MM-TP where 
the $g^\a_{*,\b}$  satisfy \eqref{un-assum}  and conditions 
\eqref{eq:weak-phil}, \eqref{target-rates-cc} 
for  
$H^*_\alpha(\beta,k),\, \a\geq 1, 1\le k\le \beta$
 given in  \eqref{target-rates-c},   $\Delta^*_r(s),\,r>0,s\ge 0$ given in 
\eqref{tp:Delta*}
and $w(\a),\,\a\ge 0$ given in 
\eqref{ex5-w-def}. Then, consistently with \eqref{eq:mu-it0-target-mid}--\eqref{eq:defH*D*},
 formulas \eqref{inv-tar}--\eqref{inv-tar2} 
define a one-parameter family of product invariant measures. 
\end{proof}

\section{Attractiveness and coupling rates}\label{s-coupling}
 In this section, we take advantage of the work done in \cite{GS} on 
multiple-jump conservative dynamics of misanthrope type, a class of models
including MMP,  to analyze attractiveness properties of MMP. 
In Subsection \ref{subsec:IcapSe}, we  
derive the extremal translation invariant and invariant probability measures
for MM-ZRP and MM-TP. In Lemma \ref{lemma:attract} and in Subsection \ref{subsec-attra-cond}
we give necessary and sufficient conditions for attractiveness of MMP, MM-ZRP and MM-TP
based on the conditions established in \cite{GS}.  \par
\medskip

Let us 
 first recall the set-up for attractiveness, for which we refer to \cite{Lig}. 
 We consider the following partial order on 
the state space $\XX\subset \N^{\X}$. For $\eta,\zeta\in\XX$,
$$\eta\leq \zeta \ \ \text{if and only if } \eta(x)\leq \zeta(x)\ \text{for every }x\in\X.$$ 
A  bounded, continuous  function $f$ on $\XX$ 
is called \emph{monotone} if $f(\eta)\leq f(\zeta)$ whenever $\eta\leq \zeta$.
We call a particle system \emph{attractive} if for every bounded, monotone continuous function $f$
 on $\XX$ and every time $t>0$, the function $S(t) f$ is again a bounded, monotone, continuous function
  on $\XX$.  To check attractiveness of a particle system, the first step is to derive necessary conditions
on the rates. The second step is to check that they are sufficient by constructing an \emph{increasing},
markovian coupling $\left( \eta_t,\zeta_t  \right)_{t\ge 0}$ of two copies of the process, 
that is a coupling which preserves the ordering of its marginals: 
\\[.1cm]
$\makebox[2cm]{} \eta_0\leq \zeta_0 $ implies 
$\eta_t\leq \zeta_t,\ P^{(\eta_0,\zeta_0)}$-almost surely, for every $t>0$. 
\\[.1cm]
Usually, for dynamics with transitions consisting in the jump, birth or death of a single particle, 
the so-called \textit{basic coupling} is used: for jumps, it means that coupled particles
try to jump together as much as possible from the same departure site to the same arrival site. 
But basic coupling is useless when $k\ge 1$ particles jump simultaneously, 
as in the class of conservative dynamics analyzed 
in \cite{GS}, which includes the MMP. 
Let us rewrite the results from
\cite{GS} we need in the context of models with generator
\eqref{generator}.
\smallskip
 
 For all $\a,\b,k\geq 0$, we denote
\begin{equation}\label{not:sigma}
{\bf\sigb}^{k}_{\a,\b}=\sum_{k'>k} \gen^{k'}_{\a,\b}  =\sum_{k'=k+1}^\a \gen^{k'}_{\a,\b}.
\end{equation} 
The following necessary and sufficient conditions for attractiveness were established in  
\cite[Theorem 2.9]{GS}:
For all $\,\a,\b,\g,\delta\geq 0$, with $\a\leq \g, \  \b\leq \delta$, 
\begin{subequations}
\begin{equation}\label{gs-atr-1}  
\forall\, l\geq 0,  \quad 
{\bf\sigb}^{\delta-\b+l}_{\a,\b} \ \leq \ {\bf\sigb}^{l}_{\g,\delta}
\end{equation} 
\begin{equation}\label{gs-atr-2} 
\qquad\&\quad 
\forall\, k\geq 0,  \quad 
{\bf\sigb}^{k}_{\a,\b} \ \geq \ {\bf\sigb}^{\g-\a+k}_{\g,\delta}.
\end{equation} 
\end{subequations}
Sufficiency was proved by constructing the following (Markovian) increasing coupling process
$\left( \eta_t,\zeta_t  \right)_{t\ge 0}$ on $\XX\times \XX$,
with semigroup $(\overline{S}(t):t\geq 0)$ 
and infinitesimal generator $\overline{\L}$. 
\begin{equation}\label{couplingL}
\overline{\L}h(\eta,\zeta) = \sum_{x\in \X} \sum_{y\in \X} p(x,y)\, \sum_{k\geq 0} \sum_{l\geq 0}  
                                 G^{k,l}_{\eta(x),\eta(y),\zeta(x),\zeta(y)} 
\left(h\left(\etb,\zetb\right)-h\left(\eta,\zeta\right)\right)
\end{equation} 
is defined for $(\eta,\zeta)\in\XX \times \XX$ on the set of functions
\begin{eqnarray} \nonumber
  \overline{\Lip}&=&\{ h:\XX \times \XX\rightarrow\R\ \text{ such that for some } \overline{L_h}>0,\\ 
  &&|h(\eta_1,\zeta_1)-h(\eta_2,\zeta_2)|\leq \overline{L_h} (\|\eta_1-\eta_2\|+\|\zeta_1-\zeta_2\|),\,
  \forall\eta_1,\eta_2,\zeta_1,\zeta_2\in\XX  \}.\label{def-lip-coupl}
\end{eqnarray}
The rates for coupled jumps are defined in \cite[Proposition 2.11]{GS}:
  for all $\a,\b,\gamma,\delta\geq 0$, for all $1\le k\le\a$ when $1\le\a$, 
 for all $1\le l\le\gamma$, 
\begin{equation}\label{coupling} 
\begin{array}{lll}
\gg  &=&  \biggl( \gen^k_{\a,\b}-\gen^k_{\a,\b}\wedge \Bigl( \sigb^{l}_{\g,\delta}  
 -  \sigb^{l}_{\g,\delta}  
\wedge   \sigb^{k}_{\a,\b}      \Bigr)  \biggr) \wedge   
 \biggl( \gen^l_{\g,\delta}-\gen^l_{\g,\delta}\wedge \Bigl( \sigb^{k}_{\a,\b}   -  
\sigb^{l}_{\g,\delta}  \wedge   \sigb^{k}_{\a,\b}      \Bigr)  \biggr),\\
G_{\alpha,\b,\gamma,\delta}^{k,0} &=&  \gen^k_{\a,\b}
 - \gen^k_{\a,\b}\wedge \Bigl( \sigb^{0}_{\g,\delta}   -  \sigb^{0}_{\g,\delta}  
\wedge   \sigb^{k}_{\a,\b}      \Bigr),   
\\
G_{\alpha,\b,\gamma,\delta}^{0,l} &=&  \gen^l_{\g,\delta} - \, 
\gen^l_{\g,\delta}\, \wedge \Bigl( \sigb^{0}_{\a,\b}  
 -  \sigb^{0}_{\a,\b}  \wedge   \sigb^{l}_{\g,\delta}      \Bigr).      
\end{array}
\end{equation}
In other words, for a given time $t$ and a departure site $x\in\X$, 
a target site $y$ is chosen with probability $p(x,y)$,
and the simultaneous jump of $k$ particles in the first coordinate 
together with $l$ particles in the second coordinate from $x$ to $y$ occurs with a rate
$
p(x,y)G^{k,l}_{\a,\b,\gamma,\delta} 
$
where $\a=\eta_{t_-}(x)$, $\b=\eta_{t_-}(y)$, $\gamma=\zeta_{t_-}(x)$, 
$\delta=\zeta_{t_-}(y)$, $0\leq k\leq \a$ and $0\leq l\leq \g$. 
Other jumps are not allowed. \par
Even if this coupling was originally
built in connection with attractiveness, it
is of course always valid, thus we used it in Section
\ref{s-model}. \par
Attractiveness conditions \eqref{gs-atr-1}--\eqref{gs-atr-2} 
can be reset in the following explicit form
for MMP. 
\begin{lemma} \label{lemma:attract} 
Consider (a) the MMP given by generator 
(\ref{generator}) with rates $\gkab$ and, respectively in (b) and (c), 
the MM-ZRP with rates $\gka$ and the MM-TP with rates $\gkb$.
\begin{itemize}
\item[(a)]    The MMP is attractive if and only if for all $\, \a\geq 1,\,\,  \b\geq 0,\, k\geq 0,$ 
\begin{subequations}
\begin{equation} \label{atr:mmp-1}  
{\bf\sigb}^{k+1}_{\a+1,\b} \quad \leq \quad {\bf\sigb}^{k}_{\a,\b}
\quad \leq\quad {\bf\sigb}^{k}_{\a+1,\b}
\end{equation} 
\begin{equation} \label{atr:mmp-2} 
\& \qquad    
{\bf\sigb}^{k+1}_{\a,\b}\quad \leq\quad {\bf\sigb}^{k}_{\a,\b+1}
\quad \leq\quad  {\bf\sigb}^{k}_{\a,\b}.
\end{equation} 
\end{subequations}
\item[(b)]   The MM-ZRP is attractive if and only if for all $\, \a \geq 1, \, k\geq 0$, 
\begin{equation} \label{atr:gzrp} 
\sum\limits_{k'>k+1}^{\a+1} \gen_{\a+1}^{k'} \quad\leq 
\quad  \sum\limits_{k'>k}^{\a} \gen_{\a}^{k'} \quad \leq 
\quad  \sum\limits_{k'>k}^{\a+1} \gen_{\a+1}^{k'} .
\end{equation}
\item[(b')]   The MM-ZRP is attractive if and only if for all $\a \geq 1, \,1\leq m\leq  \a$ 
\begin{equation} \label{atr:gzrp:2} 
0 \ \leq \ 
\sum_{j=0}^{m-1} \left(\gen_{\a}^{\a-j} - \gen_{\a+1}^{\a+1-j}\right) \quad\leq 
\gen_{\a+1}^{\a+1-m} .
\end{equation}
 \item[(c)] The MM-TP is attractive if and only if for all $\, \a\geq 1,\,\, \b\geq 0,\, k\geq 0,$ 
\begin{subequations}
\begin{equation} \label{atr:GTP-1} 
g^{\a+1}_{*,\b} \quad\leq \quad  g^{\a}_{*,\b}  
\quad\ \& \ \quad 
g^{\a}_{*,\b+1} \quad\leq\quad   g^{\a}_{*,\b}  
\end{equation} 
\begin{equation}  \label{atr:GTP-2}
\& \quad\quad \sum\limits_{k'>k+1}^{\a} g_{*,\b}^{k'} \quad 
\leq\quad  \sum\limits_{k'>k}^{\a} g_{*,\b+1}^{k'}. 
\end{equation}
\end{subequations}
\end{itemize}
\end{lemma}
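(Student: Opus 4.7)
The plan is to take Theorem 2.9 of \cite{GS}, recalled as \eqref{gs-atr-1}--\eqref{gs-atr-2}, as a black box and, in each of the three sub-cases, reduce those conditions (parametrized by four integers $\a\le\g$, $\b\le\delta$) to the explicit one-step form given in the statement. Part (a) is the core; parts (b), (b'), (c) follow by inserting the special structure of the rates into the bounds obtained for (a). For part (a), necessity is a direct specialization: taking $(\g,\delta)=(\a+1,\b)$ in \eqref{gs-atr-1} with $l=k$ yields $\sigb^{k}_{\a,\b}\le\sigb^{k}_{\a+1,\b}$, and in \eqref{gs-atr-2} yields $\sigb^{k+1}_{\a+1,\b}\le\sigb^{k}_{\a,\b}$, which together form \eqref{atr:mmp-1}; taking $(\g,\delta)=(\a,\b+1)$ produces in the same way the two halves of \eqref{atr:mmp-2}.

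For sufficiency in part (a), I would telescope along a lattice path from $(\a,\b)$ to $(\g,\delta)$. To obtain \eqref{gs-atr-1}, first apply the left inequality of \eqref{atr:mmp-2} iteratively $\delta-\b$ times, each step increasing the second coordinate by $1$ and decreasing the superscript by $1$, to reach $\sigb^{\delta-\b+l}_{\a,\b}\le\sigb^{l}_{\a,\delta}$; then apply the right inequality of \eqref{atr:mmp-1} (monotonicity of $\sigb^{l}_{\cdot,\delta}$) iteratively $\g-\a$ times to conclude $\sigb^{l}_{\a,\delta}\le\sigb^{l}_{\g,\delta}$. Condition \eqref{gs-atr-2} is dealt with symmetrically: the left inequality of \eqref{atr:mmp-1} iterated $\g-\a$ times gives $\sigb^{\g-\a+k}_{\g,\b}\le\sigb^{k}_{\a,\b}$, and the right inequality of \eqref{atr:mmp-2} iterated $\delta-\b$ times inserts $\sigb^{\g-\a+k}_{\g,\delta}\le\sigb^{\g-\a+k}_{\g,\b}$ on the left. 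The main care here lies in aligning the two simultaneous index shifts (on the site coordinate and on the superscript of $\sigb$) so that the chain of one-step bounds closes on the desired global inequality; this is the step I expect to be the main obstacle.

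For MM-ZRP (b), the rates $g^{k}_{\a,\b}=g^{k}_{\a}$ are independent of $\b$, so $\sigb^{k}_{\a,\b}$ depends only on $(\a,k)$: condition \eqref{atr:mmp-2} becomes $\sigb^{k+1}_{\a}\le\sigb^{k}_{\a}\le\sigb^{k}_{\a}$, which holds automatically (the right part is an equality, the left is a tautology since $\sigb^{k+1}_{\a}\le\sigb^{k}_{\a}$ follows from non-negativity of rates), and \eqref{atr:mmp-1} is exactly \eqref{atr:gzrp}. For (b'), the plan is to rewrite the two-sided bound in \eqref{atr:gzrp} in terms of individual rates by using $\sigb^{k}_{\a+1}-\sigb^{k+1}_{\a+1}=g^{k+1}_{\a+1}$, turning it into $0\le\sigb^{k}_{\a}-\sigb^{k+1}_{\a+1}\le g^{k+1}_{\a+1}$. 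Telescoping the middle quantity gives $\sigb^{k}_{\a}-\sigb^{k+1}_{\a+1}=\sum_{k'=k+1}^{\a}(g^{k'}_{\a}-g^{k'+1}_{\a+1})$, and reindexing via $m=\a-k$, $j=\a-k'$ transforms the bound into \eqref{atr:gzrp:2}; for $k\ge\a$ both sides are zero and nothing has to be verified.

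For MM-TP (c), I would use $g^{k}_{\a,\b}=\mathbf{1}_{[k\le\a]}g^{k}_{*,\b}$ to compute the elementary increments $\sigb^{k}_{\a+1,\b}-\sigb^{k}_{\a,\b}=g^{\a+1}_{*,\b}\,\mathbf{1}_{[k\le\a]}\ge 0$ (which makes the right half of \eqref{atr:mmp-1} automatic) and $\sigb^{k}_{\a,\b}-\sigb^{k+1}_{\a+1,\b}=g^{k+1}_{*,\b}-g^{\a+1}_{*,\b}$ for $k\le\a-1$. The left half of \eqref{atr:mmp-1} is therefore equivalent to $g^{k+1}_{*,\b}\ge g^{\a+1}_{*,\b}$ for all $0\le k\le\a-1$, i.e.\ to $\a\mapsto g^{\a}_{*,\b}$ being nonincreasing, which is the first inequality of \eqref{atr:GTP-1}. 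Specializing the right half of \eqref{atr:mmp-2} to $\a=k+1$ gives $g^{\a}_{*,\b+1}\le g^{\a}_{*,\b}$ (the second inequality of \eqref{atr:GTP-1}), and this pointwise bound then implies the full condition for all $\a$ by summation. Finally, the left half of \eqref{atr:mmp-2} transcribes term by term into \eqref{atr:GTP-2}, and conversely \eqref{atr:GTP-2} gives it back, closing the characterization.
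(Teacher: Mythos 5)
Your proposal is correct and follows essentially the same route as the paper: necessity of (a) by specializing \eqref{gs-atr-1}--\eqref{gs-atr-2} to $(\g,\delta)=(\a+1,\b)$ and $(\a,\b+1)$, sufficiency by iterating the one-step inequalities along a path from $(\a,\b)$ to $(\g,\delta)$, and then (b), (b'), (c) by inserting the special rate structure, with (b') obtained by the same reindexing $j=\a-k'$, $m=\a-k$ and (c) by the same reduction of \eqref{atr:mmp-1}--\eqref{atr:mmp-2} to \eqref{atr:GTP-1}--\eqref{atr:GTP-2}. The only differences are presentational (order of the two telescoping blocks, phrasing (b') and (c) via increments of $\sigb$ rather than a direct change of variables), and they do not affect the argument.
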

\begin{proof}
\mbox{}\\ 
\noindent
(a) Taking $(\g,\delta)=(\a+1,\b)$ in \eqref{gs-atr-1} (resp. in \eqref{gs-atr-2}) gives the 
second (resp. first) inequality in \eqref{atr:mmp-1}. 
Taking $(\g,\delta)=(\a,\b+1)$ in \eqref{gs-atr-1} (resp. in \eqref{gs-atr-2}) gives the 
first (resp. second) inequality in \eqref{atr:mmp-2}.\par
For the converse, iterate $\g-\a$ times the second (resp. first) inequality in \eqref{atr:mmp-1},
then $\delta-\b$ times the first (resp. second) inequality in \eqref{atr:mmp-2}, to get \eqref{gs-atr-1}
(resp. \eqref{gs-atr-2}).\par
 \noindent
(b) Writing \eqref{atr:mmp-1} for MM-ZRP gives \eqref{atr:gzrp}, and \eqref{atr:mmp-2}
becomes trivial  for MM-ZRP.\par
 \noindent
(b')
To rewrite \eqref{atr:gzrp} as \eqref{atr:gzrp:2}, make the change of variables
$k'=\a-j$ (resp. $k'=\a+1-j$) in the middle (resp. right and left) sum, then subtract the left sum
from every term of the inequalities.\par\smallskip
\noindent
(c) Taking $k=\a-1$ in \eqref{atr:mmp-1}--\eqref{atr:mmp-2} gives \eqref{atr:GTP-1}.
Taking $k<\a-1$ in  the first inequality of \eqref{atr:mmp-2} gives \eqref{atr:GTP-2}.
For the converse, the second inequality in \eqref{atr:mmp-2} (resp. the first 
inequality in \eqref{atr:mmp-1}) corresponds to the
replacement of $\a$ by $k'$ followed by a sum over $k'$ of the second (resp. the first)
inequality in \eqref{atr:GTP-1}.  
\end{proof}
 \begin{remark}\label{rk:single-attra}
For single-jump models, all the inequalities in Lemma
\ref{lemma:attract} are restricted to $k=0$. 
First, we recover
well-known results: for ZRP,  attractiveness means that 
 $g_\a^1$ is a nondecreasing function of $\a$ (and basic coupling
is an increasing coupling process, see \cite{andjel}
for a detailed construction). The misanthrope process was introduced in \cite{Cocozza} 
as an attractive process, that is with the function $g_{\a,\b}^1$ 
nondecreasing in its first coordinate $\a$ and  nonincreasing in its second one $\b$ 
(thus explaining the name \emph{misanthrope}). \par
For TP, attractiveness means that the function $g^1_{*,\b}$ is nonincreasing in  $\b$.
When a ZRP and a TP are dual single jump models (see Remark \ref{rk:duality-zrp-tp}),
attractiveness of the first is equivalent to attractiveness of the second.
\end{remark} 
\subsection{Characterization of $(\mathcal{I}\cap\mathcal{S})_{e}$ for MM-ZRP and MM-TP}\label{subsec:IcapSe}
The fact that attractiveness is a very strong property of conservative 
particle systems is  illustrated by the
 characterization of the set of all translation-invariant and invariant measures
of MMP.  The latter follows from \cite[Theorem 5.13]{GS}, which requires
attractiveness and irreducibility conditions for the coupling transition rates,
denoted by \textit{(IC)}. Under Conditions \textit{(IC)}, if an initial coupled configuration $(\xi_0,\zeta_0)$
contains a pair of discrepancies of opposite signs (that is, on two sites $x,y$, $(\xi_0(x)-\zeta_0(x))(\xi_0(y)-\zeta_0(y))<0$), 
there is a positive probability (for the coupled evolution) that it evolves into a locally ordered coupled
configuration (that is, for some $t\ge 0$, $(\xi_t(x)-\zeta_t(x))(\xi_t(y)-\zeta_t(y))\ge 0$;
we refer to \cite[Section 2.3]{GS} for a detailed analysis of the evolution of discrepancies).

We derive here $(\mathcal{I}\cap\mathcal{S})_{e}$ for the particular cases of MM-ZRP in the context of
Proposition \ref{gzrp-exact} and MM-TP in the context of Proposition \ref{target-exact}.  
To check conditions 
\textit{(IC)}, we explicit the values of $G^{k,l}_{\a,\b,\gamma,\delta}$
for $\a,\b,\gamma,\delta\geq 0$ and $(k,l)\in\{(0,1),(1,0),(1,1)\}$, according to 
\eqref{not:sigma} expressions \eqref{coupling}. 
\begin{equation}\label{eq:G01}  
G^{0,1}_{\a,\b,\gamma,\delta} 
= 
\left\{
\begin{array}{ll}
\gen^1_{\g,\delta}   & \ \text{ if  } \  \sigb^{0}_{\a,\b} \leq \sigb^{1}_{\g,\delta}   
 \\[.1cm]
\sigb^{0}_{\g,\delta} - \sigb^{0}_{\a,\b}  & \ \text{ if } \ 
\sigb^{1}_{\g,\delta} <  \sigb^{0}_{\a,\b} \leq \sigb^{0}_{\g,\delta} 
 \\[.1cm]
0  & \ \text{ if } \ \sigb^{0}_{\g,\delta} <  \sigb^{0}_{\a,\b}
\end{array}
\right.
\end{equation}
\begin{equation}\label{eq:G10}   
G^{1,0}_{\a,\b,\gamma,\delta}
= 
\left\{
\begin{array}{ll}
0  & \ \text{ if } \ \sigb^{0}_{\a,\b} \leq \sigb^{0}_{\g,\delta} 
 \\[.1cm]
\sigb^{0}_{\a,\b}- \sigb^{0}_{\g,\delta}   & \ \text{ if  } \ 
\sigb^{1}_{\a,\b} \leq \sigb^{0}_{\g,\delta} < \sigb^{0}_{\a,\b}
 \\[.1cm]
\gen^1_{\a,\b}    & \ \text{ if } \
\sigb^{0}_{\g,\delta} < \sigb^{1}_{\a,\b}
\end{array}
\right.
\end{equation}
\begin{equation}\label{eq:G11}  
G^{1,1}_{\a,\b,\gamma,\delta}
= 
\left\{
\begin{array}{ll}
0   & \ \text{ if } \sigb^{0}_{\a,\b} \leq \sigb^{1}_{\g,\delta}    \\[.1cm]
  \sigb^{0}_{\a,\b} - \sigb^{1}_{\g,\delta}   & \ \text{ if } 
  \sigb^{1}_{\a,\b} \leq \sigb^{1}_{\g,\delta} < \sigb^{0}_{\a,\b}\leq \sigb^{0}_{\g,\delta} \\[.1cm]
\gen^1_{\g,\delta}   & \ \text{ if } 
\sigb^{1}_{\a,\b} \leq \sigb^{1}_{\g,\delta} \leq \sigb^{0}_{\g,\delta}< \sigb^{0}_{\a,\b}     \\[.1cm]
\gen^1_{\a,\b}  & \ \text{ if }   
\sigb^{1}_{\g,\delta} < \sigb^{1}_{\a,\b} \leq \sigb^{0}_{\a,\b}\leq \sigb^{0}_{\g,\delta} \\[.1cm]
  \sigb^{0}_{\g,\delta} - \sigb^{1}_{\a,\b}   & \ \text{ if } 
  \sigb^{1}_{\g,\delta} < \sigb^{1}_{\a,\b}  \leq \sigb^{0}_{\g,\delta}< \sigb^{0}_{\a,\b} \\[.1cm]
0  & \ \text{ if } \sigb^{0}_{\g,\delta}\leq \sigb^{1}_{\a,\b}
\end{array}
\right.
\end{equation}
\begin{proposition}\label{thm:as_zr}
Consider a MM-ZRP for which the $g^k_\a$'s 
satisfy    \eqref{un-assum},   \eqref{eq:g1_zr0} and \eqref{inv-rate-zrp}.
Assume that the attractiveness condition \eqref{atr:gzrp} is satisfied. 
Then 
\begin{equation}\label{eq:as_zr}
(\mathcal{I}\cap\mathcal{S})_{e} = \{\bar{\mu}_{\varphi}: \varphi\in {\rm Rad}(Z') \}
 \text{  or  }\{\bar{\mu}_{\varphi}: \varphi\in {\rm Rad}(Z) \} 
\end{equation}
where each $\bar{\mu}_{\varphi}$  is a product probability measure with marginals given by  
\eqref{inv-zrp}.
\end{proposition}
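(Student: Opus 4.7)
The plan is to prove the two inclusions in \eqref{eq:as_zr} separately. For the inclusion ``$\supseteq$'': Proposition \ref{gzrp-exact}(b) ensures that for every $\varphi \in {\rm Rad}(Z')$ the measure $\bar{\mu}_\varphi$ is translation-invariant and invariant for the MM-ZRP, so $\bar{\mu}_\varphi \in \mathcal{I} \cap \mathcal{S}$. Since $\bar{\mu}_\varphi$ is a product measure on $\N^{\Zd}$, it is ergodic under the action of spatial translations of $\Zd$, hence extremal in $\mathcal{S}$; its extremality in $\mathcal{I} \cap \mathcal{S}$ follows.

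For the reverse inclusion ``$\subseteq$'', the plan is to invoke \cite[Theorem 5.13]{GS}, which under attractiveness (guaranteed here by the hypothesis \eqref{atr:gzrp}) together with the coupling-irreducibility condition (IC) recalled above, identifies the elements of $(\mathcal{I}\cap\mathcal{S})_e$ uniquely through their mean density $\rho = \int \eta(0)\, d\bar\mu(\eta)$. Since the map $\varphi \mapsto \rho(\varphi) := \sum_{n\ge 0} n\,\mu_\varphi(n)$ is continuous and strictly increasing on ${\rm Rad}(Z')$, reaching every value in $[0,\rho_c)$ (and $\rho_c$ itself when $\varphi_c \in {\rm Rad}(Z')$, cf. \eqref{eq:rho_c}), the family $\{\bar{\mu}_\varphi,\ \varphi \in {\rm Rad}(Z')\}$ already exhausts all densities achievable by a translation-invariant, invariant probability measure; any other candidate in $(\mathcal{I}\cap\mathcal{S})_e$ must then coincide with one of them.

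The main obstacle is the verification of (IC) for the MM-ZRP under consideration. Because $g^k_{\a,\b}=g^k_\a$ is independent of $\b$, the quantities $\sigb^k_{\a,\b}$ defined in \eqref{not:sigma} depend only on $\a$, and the attractiveness condition \eqref{atr:gzrp} reduces to monotonicity properties of $\a \mapsto \sigb^0_\a$ and $\a \mapsto \sigb^1_\a$. I would then inspect the explicit coupling rates \eqref{eq:G01}--\eqref{eq:G11} for $G^{0,1}, G^{1,0}, G^{1,1}$ in each of the possible orderings of $\sigb^0_\a, \sigb^0_\g, \sigb^1_\a, \sigb^1_\g$ compatible with \eqref{atr:gzrp}. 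The key point is that the assumption $g^1_\a>0$ from \eqref{eq:g1_zr0} forces $\sigb^0_\a - \sigb^1_\a = g^1_\a > 0$ for $\a \geq 1$, which in turn guarantees that whenever a coupled site pair $(x,y)$ carries discrepancies of opposite signs, at least one of $G^{1,0}, G^{0,1}, G^{1,1}$ is strictly positive and moves a single uncoupled particle in the direction that tends to resolve the discrepancy. Combining this with the irreducibility of $p(\cdot,\cdot)$, which allows single uncoupled particles to be transported along paths to meet discrepancies of the opposite sign, one obtains the full statement of (IC) and concludes the proof.
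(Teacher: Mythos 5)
Your proposal follows essentially the same route as the paper: existence of the family from Proposition \ref{gzrp-exact}(b), then \cite[Theorem 5.13]{GS} under attractiveness, with the core work being the verification of conditions \textit{(IC)} through the explicit coupling rates \eqref{eq:G01}--\eqref{eq:G11}, using \eqref{atr:gzrp} and \eqref{eq:g1_zr0}, and paths of coupled transitions built from the irreducibility of $p$. The only imprecision is that in the case $\sigb^{0}_{\a,\b}=\sigb^{0}_{\gamma,\delta}$ the positive rate is $G^{1,1}$, which moves a coupled pair (not an uncoupled particle) and reduces the discrepancies only after further steps along the path, exactly as in the paper's appeal to \cite[Proposition 6.3]{GS}.
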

\begin{proof}   
By Proposition \ref{gzrp-exact}, the  MM-ZRP 
has a one-parameter family of product, translation-invariant, invariant probability measures. 
As explained above,  due to attractiveness, we have to check conditions 
\textit{(IC)} of \cite[Theorem 5.13]{GS}. For $x,y\in\Z^d$, let $\a,\b,\gamma,\delta\geq 0$ be such that
$\xi_0(x)=\a,\xi_0(y)=\b,\zeta_0(x)=\gamma,\zeta_0(y)=\delta$ with $\a<\gamma,\b>\delta$.
Condition \eqref{atr:gzrp} for $k=0$ implies 
$$\sigb^{0}_{\a,\b} \le \sigb^{0}_{\gamma,\delta}\qquad \& \qquad
\sigb^{1}_{\a,\b} \le \sigb^{1}_{\gamma,\delta}\le \sigb^{0}_{\gamma,\delta}.$$
This combined with \eqref{eq:g1_zr0} enables to simplify \eqref{eq:G01}--\eqref{eq:G11},  which become 
\begin{eqnarray*}  
G^{0,1}_{\a,\b,\gamma,\delta} 
&= &
\left\{
\begin{array}{ll}
\gen^1_{\g,\delta}   & \ \text{ if  } \  \sigb^{0}_{\a,\b} \leq \sigb^{1}_{\g,\delta}   
 \\[.1cm]
\sigb^{0}_{\g,\delta} - \sigb^{0}_{\a,\b}  & \ \text{ if } \ 
\sigb^{1}_{\g,\delta} <  \sigb^{0}_{\a,\b} \leq \sigb^{0}_{\g,\delta} 
\end{array}
\right. \\    
G^{1,0}_{\a,\b,\gamma,\delta}
&=& 0 \\   
G^{1,1}_{\a,\b,\gamma,\delta}
&=& 
\left\{
\begin{array}{ll}
0   & \ \text{ if } \sigb^{0}_{\a,\b} \leq \sigb^{1}_{\g,\delta}    \\[.1cm]
  \sigb^{0}_{\a,\b} - \sigb^{1}_{\g,\delta}   & \ \text{ if } 
  \sigb^{1}_{\a,\b} \leq \sigb^{1}_{\g,\delta} < \sigb^{0}_{\a,\b}\leq \sigb^{0}_{\g,\delta} \\[.1cm]
0  & \ \text{ if } \sigb^{0}_{\g,\delta}\leq \sigb^{1}_{\a,\b}
\end{array}
\right.
\end{eqnarray*}
Therefore
\begin{equation}\label{eq:IC-for-MM-ZRP-and-MM-TP}
\begin{array}{lrl}
 \text{if} &   \sigb^{0}_{\a,\b} \le \sigb^{1}_{\gamma,\delta}
 & \text{then }\quad G^{0,1}_{\a,\b,\gamma,\delta}=g^1_{\gamma,\delta}>0;
 \\[2mm]
 \text{while if} & \sigb^{0}_{\a,\b} > \sigb^{1}_{\gamma,\delta}
 &  \text{then }\quad  G^{1,1}_{\a,\b,\gamma,\delta}
 =\sigb^{0}_{\a,\b}-\sigb^{1}_{\gamma,\delta} >0.
\end{array}
\end{equation}
Then, proceeding as in the proof of \cite[Proposition 6.3]{GS}, we build a path of coupled transitions
which leads to  a locally ordered coupled configuration. In a similar way, if $\a<\gamma,\b=\delta$, 
we build a path of coupled transitions which creates a discrepancy on $y$. Hence conditions 
\textit{(IC)} are satisfied.
 \end{proof}
 \begin{remark}
The path of coupled transitions built previously is a priori not unique. For instance, 
for the stick process (see Example \ref{ex:GZRP-2} in Section \ref{s-examples}), other paths are 
exhibited in \cite[Subsection 6.4]{GS}. 
 \end{remark}
\begin{proposition}\label{thm:as_tp}
Consider a  MM-TP for which the rates $g^k_{*,\b}$  satisfy   \eqref{un-assum}, 
 relations \eqref{eq:weak-phil}--\eqref{ex5-w-def}, and are such that $g^1_{*,\a}>0$ for every $\a\geq 0$.
 Assume moreover the attractiveness conditions \eqref{atr:GTP-1}--\eqref{atr:GTP-2} satisfied. 
Then 
\begin{equation}\label{eq:as_tp}
(\mathcal{I}\cap\mathcal{S})_{e} = \{\bar{\mu}_{\varphi}: \varphi\in {\rm Rad}(Z') \}
 \text{  or  }\{\bar{\mu}_{\varphi}: \varphi\in {\rm Rad}(Z) \} 
\end{equation}
where each $\bar{\mu}_{\varphi}$  is the product measure with  marginals given by \eqref{inv-tar}.
\end{proposition}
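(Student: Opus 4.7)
The plan is to follow the same strategy as in the proof of Proposition \ref{thm:as_zr}, replacing the MM-ZRP ingredients by their MM-TP counterparts. First, Proposition \ref{target-exact}(b) guarantees that under the assumed conditions \eqref{un-assum} and \eqref{eq:weak-phil}--\eqref{ex5-w-def}, the MM-TP possesses the one-parameter family $\{\bar{\mu}_{\varphi}: \varphi\in {\rm Rad}(Z') \}$ of product, translation-invariant, invariant probability measures with marginals~\eqref{inv-tar}. By \cite[Theorem 5.13]{GS}, together with the attractiveness assumed through \eqref{atr:GTP-1}--\eqref{atr:GTP-2} (which via Lemma~\ref{lemma:attract}(c) yields the general attractiveness conditions \eqref{gs-atr-1}--\eqref{gs-atr-2}), it suffices to verify the irreducibility conditions \textit{(IC)} for the coupling rates $G^{k,l}_{\a,\b,\gamma,\delta}$.

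For this verification, the first key step is to establish that, whenever $\a\le\g$ and $\b\ge\delta$, one has
\begin{equation}\nonumber
\sigb^{0}_{\a,\b}=\sum_{k=1}^{\a}g^{k}_{*,\b}\ \le\ \sum_{k=1}^{\g}g^{k}_{*,\delta}=\sigb^{0}_{\g,\delta},
\end{equation}
which follows by iterating the second inequality of \eqref{atr:GTP-1} (to pass from $\b$ to $\delta$) and then adding the nonnegative terms $g^{k}_{*,\delta}$ for $\a<k\le\g$. A similar argument gives $\sigb^{1}_{\a,\b}\le\sigb^{1}_{\g,\delta}$. The second key ingredient is Remark~\ref{rk:tp-g11>0}, which ensures $g^{1}_{*,\b}>0$ for every $\b\ge 0$.

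Now consider an initial coupled configuration $(\xi_0,\zeta_0)$ and two sites $x,y$ carrying a pair of discrepancies of opposite signs, say $\a=\xi_0(x)<\zeta_0(x)=\g$ and $\b=\xi_0(y)>\zeta_0(y)=\delta$. Injecting the inequalities $\sigb^{0}_{\a,\b}\le\sigb^{0}_{\g,\delta}$ and $\sigb^{1}_{\a,\b}\le\sigb^{1}_{\g,\delta}$ into the explicit expressions \eqref{eq:G01}--\eqref{eq:G11} exactly as in \eqref{eq:IC-for-MM-ZRP-and-MM-TP} gives
\begin{equation}\nonumber
G^{1,0}_{\a,\b,\g,\delta}=0,\qquad
\begin{array}{ll}
\text{if }\sigb^{0}_{\a,\b}=\sigb^{0}_{\g,\delta}: & G^{1,1}_{\a,\b,\g,\delta}=g^{1}_{*,\delta}>0,\\[1mm]
\text{if }\sigb^{0}_{\a,\b}<\sigb^{0}_{\g,\delta}: & G^{0,1}_{\a,\b,\g,\delta}=\sigb^{0}_{\g,\delta}-\sigb^{0}_{\a,\b}\vee\sigb^{1}_{\g,\delta}>0.
\end{array}
\end{equation}
Hence either both coordinates move one particle together from $x$ to $y$, or the $\zeta$-marginal moves a single particle alone, and in both cases the discrepancy structure can be displaced along a path of $p(\cdot,\cdot)$-transitions. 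The case $\a<\g$, $\b=\delta$ is handled similarly, leading to the creation of a pair of discrepancies that can subsequently annihilate. As in the final step of the proof of \cite[Proposition 6.3]{GS} invoked in Proposition~\ref{thm:as_zr}, irreducibility of $p$ then allows to concatenate such elementary coupled steps into a positive probability path from $(\xi_0,\zeta_0)$ to a locally ordered coupled configuration, establishing~\textit{(IC)}.

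The main obstacle is not conceptual but bookkeeping: one must carefully check that the inequalities between the $\sigb$-quantities remain valid \emph{throughout} the constructed path, so that at each coupled step the non-vanishing coupling rates identified above remain available. This is handled exactly as in \cite[Section 6]{GS}, exploiting that the attractiveness conditions \eqref{atr:GTP-1}--\eqref{atr:GTP-2} are preserved by the dynamics and that $g^{1}_{*,\cdot}$ is strictly positive everywhere. Combining \textit{(IC)} with attractiveness, \cite[Theorem 5.13]{GS} then identifies $(\mathcal{I}\cap\mathcal{S})_{e}$ with the set of extremal translation-invariant, invariant product measures already produced by Proposition~\ref{target-exact}(b), which is precisely the right-hand side of \eqref{eq:as_tp}.
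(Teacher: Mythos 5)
Your proposal is correct and follows essentially the same route as the paper: Proposition \ref{target-exact}(b) for the family $\{\bar{\mu}_{\varphi}\}$, Remark \ref{rk:tp-g11>0} for $g^1_{*,\b}>0$, the attractiveness conditions to order the quantities $\sigb^{0}$ and $\sigb^{1}$, and then the verification of conditions \textit{(IC)} via the explicit coupling rates \eqref{eq:G01}--\eqref{eq:G11} exactly as in the proof of Proposition \ref{thm:as_zr}, concluding with \cite[Theorem 5.13]{GS}. The only difference is that you spell out the reduction to coupled jumps with $k,l\in\{0,1\}$ and the derivation of the $\sigb$-inequalities from \eqref{atr:GTP-1}--\eqref{atr:GTP-2}, which the paper leaves implicit by referring back to the MM-ZRP case.
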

\begin{proof}   
By  Proposition \ref{target-exact},
the MM-TP  
has a one-parameter family of product, translation-invariant, invariant probability measures.

 Since we assumed that $g^1_{*,\a}>0$ for every $\a\geq 0$, 
we can proceed exactly as in the proof of Proposition \ref{thm:as_zr} to check conditions 
\textit{(IC)} of \cite[Theorem 5.13]{GS}, restricting ourselves to coupled jumps with $k,l\in\{0,1\}$.
\end{proof}
\medskip
%
\subsection{Attractiveness of MM-ZRP with product invariant measures}\label{subsec-attra-cond}
 In this paragraph we give equivalent forms of condition \eqref{atr:gzrp:2} for attractiveness
of MM-ZRP for the following rate, which is generic for MM-ZRP with product invariant 
measures 
 (cf.  \eqref{inv-s}), as will be developed in Examples 3 and 4 of Section \ref{s-examples}. 
\begin{equation} \label{ex:ZRh}
\gka = \frac{\pi(\a-k)}{\pi(\a)}\,h(k)\quad \text{ for }\ 1\leq k\leq \a
\end{equation}
where $\pi$ is a positive function on $\N$ and $h$ is a non-negative
 function on $\N\setminus\{0\}$; we set $\pi(i)=0$ for $i\in\Z^-$ and $h(0)=0$.

In this setting, condition \eqref{atr:gzrp:2} for attractiveness writes:
For all $\a \geq 1, \,1\leq m\leq\a $,
\begin{equation} \label{attractGZR}
0 \ \ \stackrel{\text(a)}{\leq} \ \ 
\sum_{j=0}^{m-1} \pi(j) \Bigl( \frac{h(\a-j)}{\pi(\a)}  - \frac{h(\a+1-j)}{\pi(\a+1)} \Bigr)\ \stackrel{\text(b)}{\leq} \   
\frac{\pi(m)h(\a+1-m)}{\pi(\a+1)} .
\end{equation}
For each 
 $n\geq 0$ denote
\begin{equation} \label{def:r-for-ZRh}  
r(n)=\frac{\pi(n)}{\pi(n+1)}.
\end{equation}
\begin{lemma} \label{attr-al-gzrp}
(a) Assume that $r$ is a nonincreasing function. Then condition 
(\ref{attractGZR}) for attractiveness  is equivalent to 
\begin{subequations}
\begin{equation} \label{at-1}
\frac{h(\a+1)}{\pi(\a+1)}\leq \frac{h(\a)}{\pi(\a)} \ \quad \forall \a\geq 1\ \qquad \&   
\end{equation}
\begin{equation}\label{at-2}
\&\qquad \sum_{k=1}^{\a} h(k) \Bigl( \frac{\pi(\a-k)}{\pi(\a)} - \frac{\pi(\a-k+1)}{\pi(\a+1)} \Bigr)\ 
\leq \pi(0) \frac{h(\a+1)}{\pi(\a+1)}  \ \quad \forall \a\geq 1\ 
\end{equation}
\end{subequations}
and the following condition is necessary for the attractiveness 
of MM-ZRP with $\gka$ given by \eqref{ex:ZRh}. 
\begin{equation}\label{at-3}
S(\a) \leq S(\a+1) \quad \forall \a\geq 2, 
 \quad\text{ where }\quad S(\a)= \frac{1}{\pi(\a)} \sum_{i=1}^{\a-1} \pi(i)\pi(\a-i). 
\end{equation}
(a') If we moreover assume that $h(\a) = h_0\pi(\a) \ \forall \a\geq 1$ 
for some constant $h_0>0$ then condition \eqref{at-3}
is sufficient and necessary for the attractiveness of 
MM-ZRP with $\gka$ of the form
\begin{equation}\label{ex:ZRh0}
\gka = h_0\frac{\pi(\a-k)}{\pi(\a)}\,\pi(k)\quad \text{ for }\ 1\leq k\leq \a. 
\end{equation}
 (b) Assume that $r$ is a nondecreasing function. Then, to have attractiveness,
Condition \eqref{at-1}  is necessary, and Condition
\begin{equation}\label{at-bis-1}
  \frac{\pi(\a+1)}{\pi(\a)} 
\geq \max_{1\leq k\leq\a }\Bigl( \frac{h(k+1)}{h(k)}\Bigr)  \ \quad \forall \a\geq 1
\end{equation} 
is sufficient.  
\end{lemma}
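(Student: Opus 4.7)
My plan for Lemma \ref{attr-al-gzrp} is to substitute the rate \eqref{ex:ZRh} into the attractiveness criterion \eqref{attractGZR} and work throughout with the auxiliary function $\varphi(k):=h(k)/\pi(k)$, since \eqref{at-1} is precisely the statement that $\varphi$ is nonincreasing, and with the telescoping identity $\pi(n)/\pi(N)=\prod_{i=n}^{N-1}r(i)$ for $n\le N$.

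For Part (a), necessity of \eqref{at-1} is immediate from $m=1$ in the left inequality of \eqref{attractGZR} and $\pi(0)>0$; necessity of \eqref{at-2} follows from $m=\a$ in the right inequality after the change of variable $k=\a-j$ and a shift $k\mapsto k+1$ in the resulting second sub-sum, where the boundary terms $h(1)\pi(\a)/\pi(\a+1)$ cancel and leave exactly $\pi(0)h(\a+1)/\pi(\a+1)$ on the right. For the converse direction, under $r$ nonincreasing and \eqref{at-1}, the chain $h(k+1)/h(k)\le 1/r(k)\le 1/r(\a)$ (valid for $1\le k\le\a$) forces every summand $\pi(j)[h(\a-j)/\pi(\a)-h(\a+1-j)/\pi(\a+1)]$ of the left inequality to be nonnegative, so it holds for every $m$. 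To promote \eqref{at-2} (the case $m=\a$ of the right inequality) to every $1\le m\le\a$, I would set
\[
D_m := \pi(m)\,h(\a+1-m)/\pi(\a+1) - \sum_{j=0}^{m-1}\pi(j)\bigl[h(\a-j)/\pi(\a)-h(\a+1-j)/\pi(\a+1)\bigr]
\]
and compute that after the $h(\a+2-m)$ contributions cancel, $D_m-D_{m-1}=h(\a+1-m)\bigl[\pi(m)/\pi(\a+1)-\pi(m-1)/\pi(\a)\bigr]$. Writing each ratio as a product of $r$-values shows the bracketed quantity has the sign of $r(\a)-r(m-1)$, which is nonpositive by $r$ nonincreasing (since $\a\ge m-1$); hence $D_m$ is nonincreasing and $D_m\ge D_\a\ge 0$ by \eqref{at-2}.

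The step I expect to require the most care is the necessity of \eqref{at-3}, whose statement is purely in terms of $\pi$. The idea is to strip the $h$-dependence out of \eqref{at-2}: rewriting it as
\[
\sum_{k=1}^{\a}\varphi(k)\,\omega(\a,k)\le \pi(0)\,\varphi(\a+1),\qquad \omega(\a,k):=\pi(k)\bigl[\pi(\a-k)/\pi(\a)-\pi(\a+1-k)/\pi(\a+1)\bigr],
\]
one has $\omega(\a,k)\ge 0$ by $r$ nonincreasing, and \eqref{at-1} gives $\varphi(k)\ge\varphi(\a+1)$ for $1\le k\le\a$; therefore $\varphi(\a+1)\sum_k\omega(\a,k)\le\sum_k\varphi(k)\omega(\a,k)\le\pi(0)\varphi(\a+1)$. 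Dividing by the positive quantity $\varphi(\a+1)$ removes every occurrence of $h$; the $k=\a$ term of $\sum_k\omega(\a,k)$ contributes the boundary piece $\pi(\a)\pi(0)/\pi(\a)=\pi(0)$ which cancels the $\pi(0)$ on the right, and the remaining identity is exactly \eqref{at-3} for $\a\ge 2$. When moreover $h=h_0\pi$, condition \eqref{at-1} degenerates to $h_0\le h_0$ and \eqref{at-2} reduces verbatim to \eqref{at-3}, which gives the claimed equivalence for rates of the form \eqref{ex:ZRh0}.

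For Part (b), necessity of \eqref{at-1} is again obtained from $m=1$ in the left inequality of \eqref{attractGZR}. For sufficiency of \eqref{at-bis-1}, I note first that $r$ nondecreasing yields the termwise inequality $\pi(\a-k)/\pi(\a)\le\pi(\a+1-k)/\pi(\a+1)$ directly from $\prod_{i=\a-k}^{\a-1}r(i)\le\prod_{i=\a+1-k}^{\a}r(i)$; multiplying by $h(k)\ge 0$ and summing over the relevant range shows that the right inequality of \eqref{attractGZR} holds unconditionally in $h$. Condition \eqref{at-bis-1} then handles the left inequality since it is exactly $h(k+1)/\pi(\a+1)\le h(k)/\pi(\a)$ for $1\le k\le\a$, making every summand on the left-hand side nonnegative.
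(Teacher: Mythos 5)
Your proposal is correct. Part (a) follows the paper's own route step for step: \eqref{at-1} from $m=1$ in the left inequality of \eqref{attractGZR}, \eqref{at-2} from $m=\a$ in the right one after the shift $k\mapsto k+1$ with the boundary cancellation you describe, sufficiency of the left inequality via the chain $h(k+1)/h(k)\le 1/r(k)\le 1/r(\a)$, and sufficiency of the right one by monotonicity in $m$ of your defect $D_m$ — which is exactly the function the paper asserts to be nonincreasing; your explicit computation $D_m-D_{m-1}=h(\a+1-m)\bigl[\pi(m)/\pi(\a+1)-\pi(m-1)/\pi(\a)\bigr]$ supplies the detail the paper omits. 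The derivation of \eqref{at-3} (factor out $\varphi$, use $\varphi(k)\ge\varphi(\a+1)$ and nonnegativity of $\omega(\a,k)$, divide) is the paper's argument up to dividing by $\varphi(\a+1)$ instead of $\varphi(\a)$, and the $h=h_0\pi$ case is handled identically. Part (b) is where you genuinely diverge: the paper proves the right inequality of \eqref{attractGZR} by induction on $m$, using \eqref{at-1} in the base case together with $r$ nondecreasing, whereas you decouple the two inequalities, observing that $r$ nondecreasing gives $g^k_\a\le g^k_{\a+1}$ termwise, so after the index shift and dropping the nonnegative term $\pi(0)h(\a+1)/\pi(\a+1)$ the right inequality holds for \emph{every} nonnegative $h$, and \eqref{at-bis-1} is only needed termwise for the left inequality. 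This is a cleaner argument and proves slightly more than the paper's induction. Both your write-up and the paper share the same implicit conventions (positivity of $h$ when ratios or divisions by $\varphi(\a+1)$ appear, and the $j=0$ boundary comparison $\pi(0)/\pi(1)$ versus $r(\a)$, since $r$ is only defined for $n\ge 1$), so no gap is introduced beyond what is already tacit in the paper.
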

\begin{proof}  
 \textit{Preliminaries:} Note that the function
\begin{equation}\label{at-fct-as-r}
m\mapsto\phi_\a(m)=\frac{\pi(m)h(\a+1-m)}{\pi(\a+1)} 
- \sum_{j=0}^{m-1} \pi(j) \Bigl( \frac{h(\a-j)}{\pi(\a)}  
- \frac{h(\a+1-j)}{\pi(\a+1)} \Bigr)
\end{equation}
defined for $1\leq m\leq\a$, which represents the r.h.s. of 
(\ref{attractGZR}b) minus its l.h.s.,
has the same monotonicity as $r$: if $r$ is nonincreasing 
(resp. nondecreasing), then $\phi_\a$ is also nonincreasing
(resp. nondecreasing).\\ 

\noindent
Part (a).
\mbox{}\\
$\bullet$
 Taking $m=1$ in (\ref{attractGZR}a) yields \eqref{at-1}.
 Then \eqref{at-1} combined with the fact that $r$ is nonincreasing implies that,
for $0\le j\le m-1$, $1\le m\le \a$,
$$\frac{h(\a-j)}{h(\a-j+1)}\ge \frac{\pi(\a)}{\pi(\a+1)} $$
hence (\ref{attractGZR}a) is valid. Therefore \eqref{at-1} is equivalent 
to (\ref{attractGZR}a).\par
\noindent 
$\bullet$
Because $r$ is nonincreasing, so is $\phi_\a(.)$ (by the preliminaries).
Hence condition (\ref{attractGZR}b) for every $1\leq m\leq\a $ 
is equivalent to (\ref{attractGZR}b) for  $m=\a$.
The latter is equivalent, by the change of variables $k=\a-j$, to 
condition \eqref{at-2}.\par
\noindent
$\bullet$
Using \eqref{at-1} twice in   \eqref{at-2}, first through inequality 
$\displaystyle{h(k)\geq \pi(k)\frac{h(\a)}{\pi(\a)}}$
for each summand on its left hand side, then on its r.h.s.,
 we obtain
$$ \frac{h(\a)}{\pi(\a)}\sum_{k=1}^{\a} \frac{\pi(k)}{\pi(0)}
 \Bigl( \frac{\pi(\a-k)}{\pi(\a)} - \frac{\pi(\a-k+1)}{\pi(\a+1)} \Bigr)\ 
\leq \frac{h(\a+1)}{\pi(\a+1)} \leq \frac{h(\a)}{\pi(\a)}
$$
which is trivial for $\a=1$, and implies 
\eqref{at-3} for $\a\ge 2$. \\ \\
 Part (a'). 
\mbox{}\\
If moreover the equality in \eqref{at-1} holds for all $\a$ 
(that is, $h(\a)=h_0\pi(\a)$), then 
\eqref{at-3} implies \eqref{at-2}. \\ \\
Part (b).
\mbox{}\\ 
As in Part (a), taking $m=1$ in (\ref{attractGZR}a) yields \eqref{at-1}. For the converse,
Condition \eqref{at-bis-1} implies (\ref{attractGZR}a). 
 To get (\ref{attractGZR}b), we use that because $r$ is nondecreasing, 
so is $\phi_\a(.)$ (by the preliminaries). Hence condition (\ref{attractGZR}b) 
for every $1\leq m\leq\a $ is equivalent to (\ref{attractGZR}b) for  $m=1$, 
which comes from the fact that $r$ is nondecreasing. 
 \end{proof}
\begin{remark}\label{rk:gkk}
Taking $k=\a$ in \eqref{ex:ZRh0} implies that for all $\a\ge 1$,
$g_{\a}^{\a} = h_0\pi(0)$.
\end{remark}
\medskip
Because \eqref{at-3} does not depend on $h$ anymore, Lemma \ref{attr-al-gzrp} implies: 
\begin{corollary}\label{cor:attr-al-gzrp}
 Assume $r$ nonincreasing. If the MM-ZRP with 
 $\gka = \frac{\pi(\a-k)}{\pi(\a)}\,\pi(k)$ is not attractive then the MM-ZRP with 
 $\gka = \frac{\pi(\a-k)}{\pi(\a)}\,h(k)$ is not attractive for any other choice of $h(\cdot)$. 
\end{corollary}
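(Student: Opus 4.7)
The plan is to read Corollary \ref{cor:attr-al-gzrp} as a direct contrapositive consequence of the last two items in Lemma \ref{attr-al-gzrp}(a). Observe that the rate $\gka = \pi(\a-k)\pi(k)/\pi(\a)$ corresponds to the special choice $h(\a)=h_0\pi(\a)$ with $h_0=1$ in \eqref{ex:ZRh}, which puts it in the regime covered by the final statement of Lemma \ref{attr-al-gzrp}(a): under the hypothesis that $r$ defined in \eqref{def:r-for-ZRh} is nonincreasing, the attractiveness of this specific MM-ZRP is \emph{equivalent} to condition \eqref{at-3}. Meanwhile, for a general $h$, the same lemma tells us that condition \eqref{at-3} is only \emph{necessary}, not necessarily sufficient.

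The proof I would write is then just one line of logic: suppose the MM-ZRP with $\gka=\pi(\a-k)\pi(k)/\pi(\a)$ fails to be attractive; by the equivalence in the $h=\pi$ case, condition \eqref{at-3} must be violated for at least one $\a\ge 2$; but \eqref{at-3} is an inequality involving only $\pi$ and not $h$, so violating it rules out attractiveness of the MM-ZRP with rates $\gka=\pi(\a-k)h(k)/\pi(\a)$ for any choice of $h$, again by the necessity statement in Lemma \ref{attr-al-gzrp}(a).

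I do not expect any real obstacle here: the whole content is hidden in Lemma \ref{attr-al-gzrp}(a), which has already been proved just above the corollary, and in the crucial observation that condition \eqref{at-3} is $h$-independent. The only thing worth emphasizing in the write-up is that the monotonicity hypothesis on $r$ is used twice — once to apply the equivalence (sufficient direction in the $h=\pi$ case) and once to apply the necessity of \eqref{at-3} in the general case — so both invocations of Lemma \ref{attr-al-gzrp}(a) are legitimate.
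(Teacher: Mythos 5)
Your proposal is correct and follows exactly the paper's route: the paper deduces the corollary from Lemma \ref{attr-al-gzrp}(a) precisely because \eqref{at-3} is both equivalent to attractiveness when $h=h_0\pi$ and necessary (while being $h$-independent) for general $h$, which is your contrapositive argument. Nothing is missing; your remark that the monotonicity of $r$ licenses both invocations of the lemma is accurate.
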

Therefore for attractiveness properties, it is enough to restrict ourselves to the MM-ZRP with $\gka$ given by \eqref{ex:ZRh0}
with $r$ nonincreasing, for which  we arrange condition \eqref{at-3}. By \eqref{def:r-for-ZRh},  
we can write for  $\a\geq 2$, $i\geq 2$  with $i\leq \a$,
\begin{equation}\label{arranged-at-3}
S(\a)= \frac{\pi(i) \pi(\a-i)}{\pi(\a)} 
=\pi(1) \frac{r(\a-1)r(\a-2)r(\a-3)\ldots r(\a-i)}{r(1)r(2)\ldots r(i-1)}. \end{equation}  
\bigskip


\section{Condensation} \label{s-discussion}

It follows from Section \ref{s-invariant} that we are able, 
for a probability measure $\bar{\mu}$ which is 
product, translation-invariant on $\Zd$, with single site marginal
 $\mu$ satisfying some conditions, to define rates of a MMP for which 
$\bar{\mu}$ is invariant. Moreover this yields a one-parameter family 
$\{\mp:\vfi\in {\rm Rad}(Z')\}$  of invariant measures for this process;  
 the single site partition function $Z_\vfi$, defined in  \eqref{Z},
has radius of convergence $\vfi_c$,  and the critical density $\rho_c$ was defined in \eqref{eq:rho_c}. 

The point is that, as we explain below in subsection \ref{subsec:cond-known},  
condensation phenomena amount to  convergence results   
concerning only stationary distributions, and not the details of the dynamics.
So it is enough to verify convergence properties of 
the partition function and the average density of particles 
with respect to the involved probability  measure to know whether 
condensation occurs in the stationary states.
Then having a stationary state which produces condensation we can 
study different MMPs which lead to this stationary state: 
We give some examples at the end of this section, and others in Section \ref{s-examples}.

We end this section checking whether attractiveness and condensation could be
compatible for MMPs. We answer `no' for particular cases. 
We then turn to this question for the second set-up for condensation, 
where we show that the answer `yes' is possible. 
\\ 
\subsection{Known results}\label{subsec:cond-known}
Consider a finite set $\Lambda\subset\Z$ with $L$ sites where $N$ particles evolve
according to the restriction on $\Lambda$ of the infinite volume dynamics,  with a periodic boundary condition. 
To stress dependence on finite volume we denote by $\bar{\mu}^{\Lambda}_\vfi  $ 
the product measure on $\N^{\Lambda}$ with
marginals $\mu_{\vfi}$. Such a  finite volume process has a unique stationary 
distribution $\mu_{N,L}$ (independent of $\vfi$) which can be expressed 
for $\eta\in\N^{\Lambda} $ as 
\begin{equation}\label{can-ensem}
\mu_{N,L} (\eta) = \bar{\mu}_\vfi^{\Lambda}\left( \eta | \sum_{x\in\Lambda} \eta(x) =N \right) .
\end{equation} 
The first set-up for condensation  \cite{AL,evans0,GSS} concerns, 
for a system with finite critical density $\rho_c<\infty$,
the thermodynamic limit of the above restricted dynamics 
when $N/L\rightarrow\rho>\rho_c $ when $N,L\rightarrow\infty$.
Then there is no appropriate invariant measure  and the system can be seen at two levels:
\emph{a~homogeneous background} (fluid phase) 
where occupation numbers  per site are distributed according to the product measure at critical density $\bar{\mu}_{\vfi_c}$, 
and \emph{a~condensate} (condensed phase) in which the excess mass is
accumulated on a single site.   We say that the process (or dynamics) \textit{allows condensation}. 

In the second set-up for condensation  \cite{BL,FeLaSi, rcg}, 
the number $L$ of sites is  fixed and only the total number $N$ of particles goes to infinity.
This set-up   requires only $\vfi_c$ to be finite, but $\rho_c$ can be infinite.
 We talk about  \textit{condensation on a fixed finite volume}.
 
We mention both approaches since we will present examples for each one.  In either case, we say that the process 
\textit{exhibits condensation}.  Note that  both cases require that $Z_{\vfi_c}<+\infty$. 
\\

 In the first set-up for condensation, assuming $\rho_c<\infty$, 
the following \emph{equivalence of ensembles} result was proved 
in \cite{GSS} in the context of zero range processes:
For all $\rho\geq 0$,  $n\in\N$, we have (for $\mu_\vfi$  given by \eqref{inv-zrp}) 
\begin{equation}\label{eoe}
\lim_{\substack{ L,N\rightarrow\infty\\ N/L\rightarrow\rho}}\mu_{N,L}(\eta(x_1)=k_1,...,\eta(x_n)=k_n)=
\prod_{i=1}^n \mu_\vfi(k_i) \qquad \text{ for all } k_1,\ldots,k_n\in\N.
\end{equation}
In the particular case of a ZRP with rate
 \begin{equation}\label{ex:g_b>2}
 g_n^1=1+\frac{b}{n}\qquad\mbox{ for some }\quad b>2,
 \end{equation}
 by  \eqref{inv-zrp},  the product invariant measures $\mp$ have single site marginals given by 
\begin{equation}\label{generic-cond}
\mu_{\vfi}(k) = Z_{\vfi}^{-1}\frac{\vfi^k}{\prod_{i=1}^k \left(1+\frac{b}{i}\right) }.
\end{equation}
The local convergence in \eqref{eoe} was improved in \cite{AL} in (strong) convergence 
in the total variation norm: for $\rho>\rho_c$
\begin{equation}\label{loul-r}
\lim_{\substack{ L,N\rightarrow\infty\\ N/L\rightarrow\rho}}\left\|\mu_{N,L} - \frac{1}{L} \sum_{x\in\Lambda} 
\left(\mu_{\varphi_c}^{\Lambda\setminus\{x_L\}}\ast\delta_{N-\sum_{\Lambda\setminus\{x_L\}} \eta }  \right)
\circ \sigma^{x,x_L}\right\|_{\rm{t.v.}} = 0, 
\end{equation}
where $x_L$ is the last site of linearly ordered $\Lambda$, 
$\mu_{\varphi_c}^{\Lambda\setminus\{x_L\}}\ast\delta_{N-\sum_{\Lambda\setminus\{x_L\}} \eta }$ 
is a probability measure on $\N^{\Lambda}$  with marginal on  $\N^{\Lambda\setminus\{x_L\} }$ given by
$\mu_{\varphi_c}^{\Lambda\setminus\{x_L\}}$ and one site marginal on $ \{x_L\} $ given by the Dirac measure 
at $N- \sum_{x\in\Lambda\setminus\{x_L\}}  \eta(x)$,
 and $\sigma^{x,x_L}$ is a permutation of configurations given by
$\left( \sigma^{x,x_L} \eta \right) (z) = 
\begin{cases} \eta(z)\ \ \text{ for } z\neq x,x_L \\ \eta(x_L)  \ \text{ for } z=x \\
\eta(x) \ \ \text{ for } z=x_L. \end{cases}$
\\
If moreover  $b>2$, \eqref{loul-r} implies a stable limit theorem 
for $M_L(\eta)=\max_{x\in\Lambda} \eta(x)$ (see \cite{AL}). \\ 

\noindent
 For the second set-up for condensation, 
consider the case $b\in (1,2]$ for the measure $\bar{\mu}_{\vfi}$ 
defined by \eqref{generic-cond}. As mentioned in \cite{GSS}, 
$Z_{\vfi}<\infty$ for every $\vfi\in[0,1]$, that is, $\vfi_c=1$, and 
there is no finite critical density  but \\[2mm]
\centerline{$\mu_1(0)=\frac{b-1}{b}\quad \& \quad 
\mu_1(k) \sim \Gamma(b) (b-1) k^{-b} $ for $k>0$.}
\\[2mm]
Quoting from \cite{GSS}, ``a typical configuration for this stationary 
distribution has a hierarchical structure which can be understood as a precursor 
for a condensation phenomenon''. 
It is proved in \cite{FeLaSi} that for a fixed number of sites   $L\geq 2$,  the probability measures 
\begin{equation}\label{L_fix_conver}
\hat{\mu}_{N,L-1}\ \text{ converge weakly to } \hat{\mu}_{1}^{L-1} \ \text{ as  } N\rightarrow\infty
\end{equation}
where $\hat{\mu}_{N,L-1}$, $\hat{\mu}_{1}^{L-1}$ are measures on 
$\N^{\{1,...,L-1\}}$ derived  respectively from  $\mu_{N,L}$ and  
${\bar\mu}_{1}^{\{1,...,L\}}$ on $\N^{\{1,...,L\}}$, applying an operator 
on them which orders the coordinates of a configuration and  removes the last one 
(the maximal one).  It means that all but finitely many particles accumulate at one site.

 \subsection{Back to MMP}\label{subsec:cond-mmp}
As indicated at the beginning of this section, let us give some examples of MMPs 
exhibiting condensation. 
For instance, consider again the generic model for condensation used firstly 
in \cite{evans0} and later in  \cite{GSS} and  \cite{AL}, that is, ZRP with rate \eqref{ex:g_b>2}
and invariant measures $\mp$  given by their single site marginals \eqref{generic-cond}.
Then  many possible MMPs have this stationary state and hence they  may exhibit condensation. \\

\noindent $\bullet$ If we choose a MM-ZRP, by \eqref{inv-s} in Proposition \ref{gzrp}, it has rates
\begin{equation}\label{gzrp-gen} 
g^k_{\a+k} = c(k) \frac{\mu(\a)}{\mu(\a+k)} =  c(k)\ \prod_{i=\a+1}^{\a+k} \left( 1+\frac{b}{i}\right)
\end{equation} 
for $k\geq 1,\, \a\geq 0$ where $c$ is an arbitrary positive function on $\N\setminus\{0\}$ and $\mu$ is the special case of \eqref{generic-cond} for $\vfi=1$. 
We study this case in details later on, as Example \ref{gen-zrp} 
in Section~\ref{s-examples}.\\

\noindent $\bullet$ If we choose a MM-TP, we obtain from  Proposition \ref{target-exact}(a), using that the ratio $\mu(n+1)/\mu(n)$
 is nondecreasing, a recursive formula for its rates
\begin{eqnarray}
g^{\a}_{*,0} &=& c^*(\a) \nonumber \\[-1mm]
g^\a_{*,\b}  &=& c^*(\a) + \sum_{k=1}^{\b} \frac{H_{\a}(\b,k)}{\mu(\b)}\, c^*(k),
\qquad  \b\geq 1,  \label{target-g.e.}
 \end{eqnarray}
for every $\a\geq 1$, where  $c^*$ is an arbitrary positive function on $\N\setminus\{0\}$ and 
the function $H$ is given by formulas \eqref{tp:Delta}--\eqref{tp:hexp}. \\ 

For both these  MM-ZRP and MM-TP, the results \eqref{eoe}, 
\eqref{loul-r}, \eqref{L_fix_conver} hold,  that is, they may exhibit condensation in both set-ups. \par

\begin{remark}\label{rk:duality-MM-ZRP-MM-TP}
We  observe a duality between MM-ZRP and MM-TP having 
rates $g^k_{\a}$ 
and  $g^k_{*,\b}$ 
which lead to the same family of product, 
translation-invariant, and invariant measures. But it is no longer a simple 
formula  as it was the case for the duality between ZRP and TP explicited
in Remark \ref{rk:duality-zrp-tp}.  Combining \eqref{gzrp-gen} 
and \eqref{tp:Delta} we  obtain that 
$$\Delta_{\a}(k) = c(k) \left( \frac{1}{g^k_{\a+k}} - \frac{1}{g^k_{\a-1+k}} \right). $$
By \eqref{target-g.e.}, we then obtain a formula for $ g^\a_{*,\b}$ 
\begin{eqnarray*} g^\a_{*,\b} = c^*(\a) + \sum_{k=1}^{\b} c^*(k) 
 \Bigl[ g^k_{\b} \left( \frac{1}{g^k_{\a+k}} - \frac{1}{g^k_{\a-1+k}} \right)
+ \sum_{r=1}^{\b-k} \!\!\sum_{\substack{ k_1,\dots,k_r\ge 1 \\ k_1+\dots+k_r\le \b-k}}\!\!\! 
  \frac{c(k_1)\cdots c(k_r)c(k)}{c(k+\sum_{j=1}^r k_j)}  g^{k+ \sum_{j=1}^r k_j}_{\b}  \\ 
  \left( \frac{1}{g^{k_1}_{\a+k_1}} - \frac{1}{g^{k_1}_{\a-1+k_1}} \right) \prod_{j=1}^{r-1} 
  \left( \frac{1}{g^{k_{i+1}}_{k_i+k_{i+1}}} - \frac{1}{g^{k_{i+1}}_{k_i-1+k_{i+1}}} \right)
  \left( \frac{1}{g^{k_{r}}_{k_r+k}} - \frac{1}{g^{k_{r}}_{k_r-1+k}} \right)\Bigr]
\end{eqnarray*}   
which is  a function of $(g^k_{\a+k},g^k_{\a+k-1},g^l_k:1
\leq k\leq \b,\, 1\leq l\leq k )$, $(c(k):1\leq k\leq\b)$ and $(c^*(\a),c^*(k):1\leq k\leq\b)$
only.
\end{remark}
\subsection{Condensation vs. attractiveness.}
 An intriguing question  is whether an MMP could be at the same time attractive
and exhibit condensation. \\
The answer is `no' in the two following cases.
\begin{proposition}\label{no-attract-cond-single-jump-and-mm-tp} 
 
(a) A (single jump) ZRP or  MP
cannot at the same time be attractive and exhibit condensation.

(b) An MM-TP cannot at the same time be attractive and exhibit condensation.
\end{proposition}
\begin{proof}
(a)  Recall Remark \ref{rk:single-attra}. We also  saw  in Section
 \ref{subsubsec:single-jump} 
 that if there exist product invariant measures of the form \eqref{muf}
 then  (see \eqref{k=1:mug} and \eqref{zrp-mu-invt}),
 $$r(n)=\frac{\mu(n)}{\mu(n+1)} =\varphi_\mu\, g^1_{n+1}\quad\mbox{ for ZRP,}\qquad 
 r(n)=\frac{\mu(n)}{\mu(n+1)} = \overline{\varphi}_\mu\,\frac{g^1_{n+1,0}}{g^1_{1,n}}\quad\mbox{ for MP}.$$ 
 We conclude that an attractive ZRP or an attractive MP 
 has $r(n)$ nondecreasing and so $Z_{\vfi_c} = +\infty$
 by usual criteria for convergence of series. Thus they cannot exhibit condensation.\\
(b) 
Assume a MM-TP has an invariant product measure with marginals $\mu(\cdot)$. If 
the rates $g^k_{*,\b}$ satisfy the conditions \eqref{atr:GTP-1}--\eqref{atr:GTP-2} for attractiveness
then $r(n)=\mu(n)/\mu(n+1)$ is a nondecreasing function 
and therefore $Z_{\vfi_c}=+\infty$.
Indeed, from \eqref{atr:GTP-1}--\eqref{atr:GTP-2} we obtain in particular that 
$g^{\a}_{*,\b} -   g^{\a}_{*,0} \leq 0 $ for  all $\a,\b\ge 1$. 
Combining this with \eqref{tp:rec1} gives the result. 
\end{proof}
\smallskip
 Let us consider the second set-up for condensation.
For the latter it is  proven in \cite{rcg} that: ``All spatially homogeneous processes with
stationary product measures that exhibit condensation 
with a finite critical density are necessarily not attractive''. 
 However,  processes  with an infinite critical density  could be also attractive. 
In \cite{rcg}  an example is numerically studied to support this assertion.
 In Section~\ref{s-examples} below, we prove on a particular example, that this coexistence is indeed possible:
 \begin{proposition}\label{prop:attra+cond}
 The MM-ZRP with rate 
 \begin{equation}\label{rate:attra+cond}
\begin{cases} \gka = \pi(1)\frac{\prod\limits_{i=\a-k}^{\a-1} \Bigl(1+\dfrac{b}{i}\Bigr)}
 {\prod\limits_{i=1}^{k-1} \Bigl(1+\dfrac{b}{i}\Bigr)}\,\1_{[k<\a]}
 \quad \text{ for }\ \a\geq 1, k\geq 1, \\
g_{\a}^{\a}=\pi(0)\quad \text{ for }\ \a\geq 1.
 \end{cases}
\end{equation} 
where $\pi$ is a positive function on $\N$ with $\pi(0) \geq (1+b)\pi(1)$
is attractive and exhibits condensation in the second set-up for  $b=3/2$.
\end{proposition}  
 %
 %
 %
\section{Examples} \label{s-examples}
 In this section, we present some examples of MMPs on $\Z^d$, 
for which we check existence conditions, whether they have product invariant probability measures,
whether they are attractive, and whether they exhibit  condensation. 
 
 For each dynamics, we thus verify the following conditions:
\begin{itemize} \itemsep2pt
\item[(i)] \textit{sufficient condition  \eqref{assum}  for  existence;}  \par 
The  following 
condition is equivalent to \eqref{assum}: there exists $C>0$ such that 
\begin{equation}\label{assum-ter} 
 \sum_{k=1}^{\a+1} k\,|\gen^k_{\a+1,\b} - \gen^k_{\a,\b}|
+   \sum_{k=1}^{\a} k\,|\gen^k_{\a,\b+1} - \gen^k_{\a,\b}| \, \leq \,  C
\quad \text{ for all } \a, \b\geq 0. 
\end{equation}
It means that there exists $C>0$ such that 
\begin{equation}\label{assum-quatre} 
\sum_{k=1}^{\a} k\,|\gen^k_{\a+1,\b} - \gen^k_{\a,\b}| \leq C,\ \ \
\sum_{k=1}^{\a} k\,|\gen^k_{\a,\b+1} - \gen^k_{\a,\b}| 
\leq  C \text{ and } \a \gen^{\a}_{\a,\b} \leq C \text{ for all }\a,\b\geq 0.
\end{equation} 
 \item[(i)'] \textit{sufficient condition \eqref{un-assum} 
 (which implies \eqref{bound-to-generator}) and condition \eqref{assum-alternative}}; 
\item[(ii)] \textit{necessary and sufficient conditions 
for attractiveness}, by Lemma \ref{lemma:attract}, \eqref{atr:mmp-1}-\eqref{atr:GTP-2}; 
\item[(iii)] \textit{necessary and sufficient conditions for}
 the existence of product, translation-invariant, \textit{invariant measures}, 
by  the appropriate result in Section \ref{s-invariant}.
\end{itemize}
If there are such product invariant measures, we compute 
their single site marginals $\mu_\varphi$, and 
$\varphi_c,\rho_c$. We also check  condensation properties.\\
\begin{remark}\label{rk:compar-cond}
For the various conditions in (i) and (i)', we have that
  \eqref{assum} as well as \eqref{assum-alternative}   implies \eqref{un-assum}, but converses are wrong. 
None of the implications between  \eqref{assum} and \eqref{assum-alternative} hold. Indeed, 
taking $g^k_{\a,\b}=0$ for $k>1$, and $g^1_{\a,\b}=\a+\b$, we have that \eqref{assum}
 and \eqref{un-assum} hold, but not \eqref{assum-alternative}.
Considering MM-ZRP with $\gka$ as in Example \ref{ex:GZRP-2} below, with $r(\a)=1/\a $  for $\a$ even, and
 $r(\a)=1/\a^2$ for $\a$ odd, we have that \eqref{assum-alternative} and \eqref{un-assum} hold, but not
 \eqref{assum}. 
\end{remark}
\smallskip 
\noindent
We now study four examples of MM-ZRP, and one of MM-TP, with the following features:
condensation is impossible  in Example \ref{ex:GZRP-1}; there are no product
translation-invariant invariant probability measures  in
Example \ref{ex:GZRP-2} (with the exception of the stick process); 
Example  \ref{trojka} possesses a one-parameter family
of product translation-invariant invariant probability measures; 
Example \ref{gen-zrp}  is a particular case of Example  \ref{trojka},
for which attractiveness coexists with condensation for one value of its parameter;
 Example 5 is an MM-TP dual to the MM-ZRP of Example 4.\\ 

\begin{example}\label{ex:GZRP-1}
MM-ZRP with $\gka = h(k)\,\1_{[k\leq\a]}$ for $\a\geq 1,k\geq 1$, $h$ nonnegative function.
\end{example}
\begin{itemize} \itemsep2pt
\item[(i)]  If there exists $C>0$ such that ${\a}h(\a)\leq C$
 for every $\a$, 
then condition \eqref{assum}   is satisfied.       \\
\item[(i)'] If there exists $C>0$ such that $\sum_{k=1}^{\a} k h(k) \leq C\a\ $ 
(resp. $\sum_{k=1}^{\a} h(k) \leq C$) for every $\a$, 
then condition \eqref{un-assum} (resp. \eqref{assum-alternative}) is satisfied.     \\
\item[(ii)] The process is attractive if and only if 
\begin{equation}\label{ex1-increasing}
 h(k) \,\text{ is nonincreasing for }\, k\geq 1.
\end{equation}
%
%
\item[(iii)] 
 Conditions \eqref{eq:g1_zr0} and \eqref{inv-rate-zrp} are satisfied, hence by Proposition
\ref{gzrp-exact}(b) 
there are product, translation-invariant, invariant probability measures given by \eqref{inv-zrp}, 
which are geometric:
\\[.1cm]
\centerline{$\mu_{\varphi}(\eta(x)=n)= \varphi^{n}(1-\varphi)\quad$  for all $x\in\Z^d,n\in\N$,}
\\[.1cm]
 for every $\vfi\in (0,1)$.                                                              
They do not depend on $h$. 
The normalizing constant is $Z_{\varphi}=(1-\varphi)^{-1}$; we have $\vfi_c=1$ and
$Z_{\vfi_c} = \infty$; so  there is 
 no possibility of condensation. 
\end{itemize} 
\medskip
\noindent\textit{Particular cases:}

\noindent$\bullet$ The process with $h(k)=1/k$ is  attractive;\\
\noindent$\bullet$ the process with $h(k)=k$ for $k<k_0$, $h(k)=1/k$ for $k\geq k_0$, 
is not attractive;\\
\noindent
and both  satisfy the existence condition \eqref{assum}.\\
\noindent$\bullet$ 
The totally asymmetric $q$-Hahn ZRP is one of the models  
studied in \cite{BC}; there $p(1)=1$, with 
$\displaystyle{h(k)= \frac{q^{k-1}(1-q)}{1-q^k}}$ for $q\in(0,1)$.
The totally asymmetric $q$-Hahn ZRP is attractive, and condition \eqref{assum} is satisfied. \\

\begin{example}\label{ex:GZRP-2}
MM-ZRP with $\gka = R(\a)\,\1_{[k\leq\a]}$ for $\a\geq 1,k\geq 1$ where $R$ is a nonnegative function.
\end{example}
\noindent 
\begin{itemize} \itemsep2pt
\item[(i)] Condition \eqref{assum} is satisfied if there exists $C>0$ such that for $\a\geq 0$,
$$ \a^2\,|R(\a+1)-R(\a)| + \a R(\a)\leq C.$$ 
\item[(i)']  Conditions
\eqref{un-assum} and \eqref{assum-alternative} are satisfied  if 
there exists $C>0$ such that for $\a\geq 0$, $\a R(\a)\leq C$. \\ 
 Note that if  $R$ is a constant function, 
both conditions \eqref{assum}, \eqref{un-assum} and \eqref{assum-alternative} fail. 
It is the case for the ``stick process'' introduced in \cite{Sepa1}
and also studied in \cite{GS}; for this example, another construction was provided
in \cite{Sepa1}, on a state space smaller than $\XX$. \\ 
\item[(ii)] The process is attractive if and only if 
\begin{equation}\label{eq:attra-ex2}
R(\a)\mbox{  is nonincreasing  }\quad\&\quad\a R(\a)\mbox{   nondecreasing}
\quad \mbox{for     }\a\geq 1.
\end{equation}
%
%
\item[(iii)]   Unless $R$ is constant (that is for the stick process),
Condition \eqref{inv-rate-zrp} is never satisfied
(take $k=\a-1\ge 1$ in \eqref{inv-rate-zrp}),  so there are 
no product translation-invariant and invariant probability measures. 
\end{itemize}
\medskip
\noindent\textit{Particular cases:}

\noindent$\bullet$ The process with $R(\a)=1/\a $  is  attractive;\\
\noindent$\bullet$ The process with $R(\a)=1/\a^2$ is  not attractive;\\
\noindent
and both  satisfy the existence condition \eqref{assum}.\\

\begin{example} \label{trojka}
Generic MM-ZRP with product invariant measures:  we consider the rates
\eqref{ex:ZRh} for which we began to investigate attractiveness 
in Section \ref{subsec-attra-cond}, that is 
\begin{equation}\label{ex3-rates} 
 \gka = \frac{\pi(\a-k)}{\pi(\a)}\,h(k)\,\1_{[k\leq\a]}
 \quad \text{ for }\ \a\geq 1, k\geq 1,
\end{equation}
where $\pi$ is a positive function on $\N$ and $h$ is a 
non-negative function on $\N\setminus\{0\}$. For technical 
reasons we define $\pi(i)=0$ for $i\in\Z^-$ and $h(0)=0$.
\end{example}

\noindent
\begin{itemize} \itemsep2pt
\item[(i)] Condition \eqref{assum} is satisfied if there exists $C>0$ such that for $\a\geq 0$,
 \begin{equation} \label{ex3-existence}  
 \sum_{k=1}^{\a} k\,h(k)\, |\frac{\pi(\a+1-k)}{\pi(\a+1)} 
 - \frac{\pi(\a-k)}{\pi(\a)}| + \a\frac{h(\a)}{\pi(\a)}\leq C
 \end{equation} 
\item[(i)']  Condition  \eqref{un-assum} writes: There exists  $C>0$ such that
 for every $\a\geq 1$, 
\begin{equation}\label{9-for-ex3}
\sum_{k=1}^{\a} k h(k) \pi(\a-k) \leq C\a \pi(\a).
\end{equation}  
Condition \eqref{assum-alternative} writes: there exists $C>0$ such that for every $\a\geq 1$, 
\begin{equation}\label{ex3-existence-bis}
\sum_{k=1}^{\a}\pi(\a-k)h(k)\leq C \pi(\a).
\end{equation}
\item[(ii)]   In Section \ref{subsec-attra-cond} we proved that
the process is attractive if and only if Condition \eqref{attractGZR} is satisfied.
In Lemma \ref{attr-al-gzrp} we   simplified this condition when the function 
$r(n)=\pi(n)/\pi(n+1)$ (see \eqref{def:r-for-ZRh}) is monotone.  \par
\smallskip
\item[(iii)] 
The rate \eqref{ex3-rates} is 
the generic one to satisfy assumption (\ref{inv-s}) hence we have 
a one-parameter family of product and translation-invariant measures  
\begin{equation}\label{ex3-invar}
\begin{array}{lc}
& \{\bar{\mu}_{\vfi}: \vfi\in {\rm Rad}(Z')\},  \quad  \text{or  }
\{\bar{\mu}_{\vfi}: \vfi\in {\rm Rad}(Z)\}, \\
\text{where} & \\ 
& \bar{\mu}_{\varphi}(\eta:\eta(x)=n)= Z_{\varphi}^{-1}\varphi^{n}\pi(n) \text{ for all } x\in\Zd,n\in\N
\end{array}
\end{equation}
and ${\rm Rad}(Z')$,  ${\rm Rad}(Z)$ are given by \eqref{Rad'} and \eqref{Rad}. 
These measures do not depend on  $h$. Properties of the ratio $r(n)$ yield 
the existence of a critical value  $\vfi_c$ such that $Z_{\vfi_c}<\infty$
or moreover $\rho_c<\infty$. If  $\vfi_c\in {\rm Rad}(Z)\setminus\! {\rm Rad}(Z') $ 
then the measure $\bar{\mu}_{\varphi_c}$ is well defined by \eqref{ex3-invar} but its 
first moment is infinite. If assumption  \eqref{eitherif} is satisfied, 
then  $\bar{\mu}_{\varphi_c}$ is invariant.
\end{itemize}
\medskip
%
\begin{example} \label{gen-zrp}   
A special case of Example \ref{trojka}: MM-ZRP with product invariant measures where
\begin{equation}\label{def:ex4}
r(n)=\frac{\pi(n)}{\pi(n+1)} = 1+\frac{b}{n} ,\quad b>1, \text{ for all } n\geq 1\,;
\qquad r(0)=\frac{\pi(0)}{\pi(1)} \geq 1+b. 
\end{equation} 
We are particularly interested  in the case $h(k)=\pi(k),\ k\geq 1$,  which corresponds to
example \eqref{ex:ZRh0} with $h_0=1$, or to \eqref{gzrp-gen} (see also Remark \ref{rk:gkk}). 
\end{example}

\noindent
We have for $n\geq 1$,
\begin{equation}\label{def:ex4-withga}
\pi(n)=\dfrac{\pi(1)}{\prod\limits_{i=1}^{n-1} \Bigl(1+\dfrac{b}{i}\Bigr) }
=\pi(1) \dfrac{(n-1)!}{\prod\limits_{i=1}^{n-1} (b+i)} = 
\pi(1) \dfrac{\Gamma(n)\Gamma(b+1)}{\Gamma(b+n)} .
\end{equation}
By Stirling approximation,   when $n\rightarrow \infty$ we have 
$\pi(n)\sim \pi(1)\Gamma(b+1) n^{-b}$. Therefore there exist positive constants $\omega_1, \omega_2 $ 
depending on $b$ such that, for all $n>0$,  
\begin{equation}\label{eq:stirl-ga}
\omega_1 n^{-b} \leq \pi(n) \leq \omega_2 n^{-b} 
\end{equation} 
\begin{itemize} \itemsep2pt
\item[(i)] 
Assumption \eqref{assum} for existence is equivalent to (cf. \eqref{ex3-existence}, \eqref{assum-quatre}):
there exists $C>0$ such that, for every $\a\geq 2$, 
\begin{equation}\label{ex4-suff1}
 \sum_{k=1}^{\a-1} k\,h(k)\, \left( \frac{\pi(\a-k)}{\pi(\a)} - \frac{\pi(\a+1-k)}{\pi(\a+1)}\right) 
 \leq C \ \ \text{ and } \ \ \a\frac{h(\a)}{\pi(\a)}\leq C.
\end{equation}
Since we have from \eqref{def:ex4-withga} 
\begin{eqnarray*}
\frac{\pi(\a-k)}{\pi(\a)} - \frac{\pi(\a+1-k)}{\pi(\a+1)} 
&=& kb \frac{\Gamma(\a-k)\Gamma(b+\a)}{\Gamma(\a+1)\Gamma(b+\a+1-k)},
\end{eqnarray*}
then, using \eqref{eq:stirl-ga}, condition \eqref{ex4-suff1} is equivalent to : there exists $C>0$
such that, for every $\a\geq 2$,
\begin{equation}\label{ex4-exist-cond2}
 \a^{b-1} \sum_{k=1}^{\a-1} k^2 h(k) \frac{1}{ (\a-k)^{b+1}} 
 \leq C\ \text{ and } \ \a h(\a)\leq C \pi(\a).
\end{equation}  
With the choice $h(k)=\pi(k)$, the  sufficient condition \eqref{ex4-suff1} 
leads to  $\a h(\a)\leq C \pi(\a)$ which is never satisfied. 
 But with the choice $h(k)=\pi(k)/k$  that we put  in \eqref{ex4-exist-cond2}
combined with \eqref{eq:stirl-ga} for $\pi(k)$,
then  condition \eqref{ex4-suff1}  writes 
\begin{equation}\label{ex4-exist-particular}
\forall\,\a\geq 2,\quad \sum_{k=1}^{\a-1} f_{k,\a}(b) \leq C,
 \qquad\mbox{ where }\qquad
 f_{k,\a}(b)= \frac{\a^{b-1}}{k^{b-1} (\a-k)^{b+1}}
\end{equation}
for some $C>0$.  It is satisfied for  $b\geq 1$ since we have that 
\begin{lemma}\label{lem:9.1}
For every $b\geq 1$ 
there exists $C>0$ such that \eqref{ex4-exist-particular} is valid.
\end{lemma}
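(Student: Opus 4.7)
The plan is to split the sum at $k=\lfloor \alpha/2\rfloor$ and bound the two halves separately, exploiting that in the ``small $k$'' half the factor $\alpha-k$ is of order $\alpha$, while in the ``large $k$'' half the factor $k$ is of order $\alpha$. This turns each half into a (uniformly in $\alpha$) convergent series.

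First I would handle the upper half $\lceil\alpha/2\rceil\le k\le \alpha-1$. Here $k\ge \alpha/2$ yields $\alpha^{b-1}/k^{b-1}\le 2^{b-1}$, so with the change of index $j=\alpha-k$,
\begin{equation*}
\sum_{k=\lceil\alpha/2\rceil}^{\alpha-1} f_{k,\alpha}(b) \;\le\; 2^{b-1}\sum_{j=1}^{\lfloor\alpha/2\rfloor} \frac{1}{j^{b+1}} \;\le\; 2^{b-1}\zeta(b+1),
\end{equation*}
which is finite since $b+1>1$. This bound is uniform in $\alpha$ and valid for every $b\ge 1$.

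Next I would bound the lower half $1\le k\le \lfloor\alpha/2\rfloor$. Here $\alpha-k\ge \alpha/2$, hence
\begin{equation*}
\sum_{k=1}^{\lfloor\alpha/2\rfloor} f_{k,\alpha}(b) \;\le\; \frac{2^{b+1}}{\alpha^2}\sum_{k=1}^{\lfloor\alpha/2\rfloor} \frac{1}{k^{b-1}}.
\end{equation*}
A short case analysis on $b$ finishes the argument: if $b>2$ the series $\sum_k k^{1-b}$ converges, giving an $O(\alpha^{-2})$ bound; if $b=2$ the partial sum is $O(\log\alpha)$, giving $O(\alpha^{-2}\log\alpha)$; if $1<b<2$ the partial sum is bounded by $C(\alpha/2)^{2-b}/(2-b)$, yielding $O(\alpha^{-b})$; and if $b=1$ the sum equals $\lfloor\alpha/2\rfloor$, producing an $O(\alpha^{-1})$ bound. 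In every case the quantity is uniformly bounded in $\alpha\ge 2$.

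Combining the two halves yields the uniform bound. The only real subtlety is the case analysis near the boundaries $b=1$ and $b=2$, where the series $\sum 1/k^{b-1}$ ceases to converge; however, the extra $1/\alpha^2$ prefactor absorbs the polynomial/logarithmic blow-up of the partial sum, so no sharper estimate is needed.
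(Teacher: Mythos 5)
Your proof is correct, but it follows a genuinely different route from the paper's. You split the sum at $k=\lfloor\a/2\rfloor$: on the large-$k$ half you use $(\a/k)^{b-1}\le 2^{b-1}$ and sum $1/(\a-k)^{b+1}$ into $2^{b-1}\zeta(b+1)$, while on the small-$k$ half you extract the prefactor $2^{b+1}/\a^2$ from $(\a-k)\ge\a/2$ and then do a case analysis on $b$ (at $b=1$, $1<b<2$, $b=2$, $b>2$) to control $\sum_{k\le\a/2}k^{1-b}$. The paper instead observes that $\partial_b f_{k,\a}(b)<0$ for $2\le k\le\a-2$ (because $k(\a-k)\ge\a$ there), which collapses every $b\ge1$ to the single case $b=1$, where the interior sum is just $\sum_{k=2}^{\a-2}(\a-k)^{-2}\le 1$ by integral comparison; the two endpoint terms $k=1$ and $k=\a-1$ are bounded by $2^b$ separately. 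The paper's monotonicity trick avoids any case analysis but is tied to $b\ge1$; your midpoint split costs a short case analysis near the boundary exponents but needs no comparison in $b$, shows the small-$k$ contribution actually vanishes as $\a\to\infty$, and in fact works verbatim for every $b>0$, not just $b\ge1$. Both yield a constant depending only on $b$, which is all the lemma requires. One cosmetic point: for even $\a$ your two ranges overlap at $k=\a/2$; since all terms are nonnegative this only inflates the upper bound, so nothing is lost, but it is worth a word in a final write-up.
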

\begin{proof}
 For $b=1$, we have that
$$ \sum\limits_{k=1}^{\a-1} f_{k,\a}(1)= \sum\limits_{k=1}^{\a-1}\frac{1}{k^2} 
$$
For $b>1$, we have that for $1\le k \le \a-1$, $f_{k,\a}(b)\le \hat{f}_{k,\a}(b-1)$
(see \eqref{ex4-main-cond-12} below), hence the result follows from the proof of
Lemma \ref{lem:9.2} below. 
\end{proof}
\item[(i)'] 
In the case $h(k)=\pi(k) $, condition \eqref{un-assum}  is satisfied if and only if for some $C>0$ 
\begin{equation} \label{ex4-12}
 \sum_{k=0}^{\lfloor\a/2\rfloor} g_\a^k =
\sum_{k=0}^{\lfloor\a/2\rfloor} \frac{\pi(k)\pi(\a-k)}{\pi(\a)} \leq C \ \ \text{for all } \a\geq 2.
\end{equation}                                                                                                                  
Indeed, \eqref{un-assum} writes \eqref{9-for-ex3}, and if we use that the term 
$\frac{\pi(k)\pi(\a-k)}{\pi(\a)}$ is symmetric we obtain \eqref{ex4-12}, which,
by \eqref{def:ex4-withga}, is equivalent to:  for some $C>0$,
$$ \frac{\Gamma(b+\a)}{\Gamma(\a)}\sum_{k=1}^{\lfloor\a/2\rfloor} \frac{\Gamma(\a-k)}{\Gamma(b+\a-k)} 
\frac{\Gamma(k)}{\Gamma(b+k)} \leq C $$
 which, using Stirling approximation \eqref{eq:stirl-ga}, is equivalent to
\begin{equation}\label{ex4-main-cond-12}
\forall\,\a\geq 2,\quad\sum_{k=1}^{\lfloor\a/2\rfloor} \hat{f}_{k,\a}(b)\leq C,
 \qquad\mbox{ where }\qquad
 \hat{f}_{k,\a}(b)= \frac{\a^{b}}{k^{b} (\a-k)^{b}}
\end{equation}
for some $C>0$. We have that 
\begin{lemma}\label{lem:9.2}
For every  $b> 0$ there exists $C>0$ such that \eqref{ex4-main-cond-12} is satisfied.
 \end{lemma}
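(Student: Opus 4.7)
The plan is to exploit the restriction $k \le \lfloor \alpha/2 \rfloor$ in the summation to decouple the two factors $k^b$ and $(\alpha - k)^b$ in the denominator, and then recognize the remaining sum as a tail of the Riemann zeta series.

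First, I would observe that for $1 \le k \le \lfloor \alpha/2 \rfloor$ one has $\alpha - k \ge \alpha/2$, hence $(\alpha-k)^b \ge (\alpha/2)^b = \alpha^b/2^b$. Plugging this bound into the definition of $\hat{f}_{k,\alpha}(b)$ yields
\begin{equation*}
\hat{f}_{k,\alpha}(b) \;=\; \frac{\alpha^b}{k^b (\alpha-k)^b} \;\le\; \frac{\alpha^b \cdot 2^b}{k^b \cdot \alpha^b} \;=\; \frac{2^b}{k^b},
\end{equation*}
uniformly in $\alpha$. Summing over $k$ from $1$ to $\lfloor \alpha/2 \rfloor$ and extending the sum to infinity, I obtain
\begin{equation*}
\sum_{k=1}^{\lfloor \alpha/2\rfloor} \hat{f}_{k,\alpha}(b) \;\le\; 2^b \sum_{k=1}^{\infty} \frac{1}{k^b} \;=\; 2^b\, \zeta(b),
\end{equation*}
which is finite precisely because $b > 1$. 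Taking $C = 2^b \zeta(b)$ finishes the proof.

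There is no real obstacle here: the only subtlety is that the bound $(\alpha-k)^b \ge (\alpha/2)^b$ is exactly what allows the $\alpha^b$ in the numerator to cancel, and this works only under the constraint $k \le \alpha/2$ (which justifies why, in the derivation of condition \eqref{ex4-main-cond-12} from \eqref{ex4-12}, one restricted the sum to $k \le \lfloor \alpha/2 \rfloor$ via symmetry). The hypothesis $b > 1$ is used only at the very last step, to guarantee convergence of $\sum k^{-b}$; for $b = 1$ the estimate degenerates to a divergent harmonic sum, consistently with the fact that Lemma \ref{lem:9.1} gave only a weaker logarithmic-type control in that regime.
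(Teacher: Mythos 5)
Your proof is correct, and it takes a genuinely different and more elementary route than the paper's. The paper first uses monotonicity of $\hat{f}_{k,\a}(b)$ in $b$ to reduce to a near-critical exponent $b_{\min}=1+\varepsilon$, then compares the sum with the integral $\int_{1}^{\a/2}\a^{b_{\min}}x^{-b_{\min}}(\a-x)^{-b_{\min}}\,dx$ and, after a change of variables, needs a further substitution to show that $\a^{-\varepsilon}\int_{1/\a}^{1/2}u^{-1-\varepsilon}(1-u)^{-1-\varepsilon}\,du$ stays bounded — a computation delicate enough that the paper explicitly remarks it fails at $b=1$. You instead exploit the restriction $k\le\lfloor\a/2\rfloor$ directly: $(\a-k)^b\ge(\a/2)^b$ cancels the $\a^b$ in the numerator, giving $\hat{f}_{k,\a}(b)\le 2^b k^{-b}$ uniformly in $\a$, so the sum is at most $2^b\zeta(b)$, finite exactly when $b>1$. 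This is shorter, avoids the integral comparison and the $b_{\min}$ device entirely, and yields an explicit uniform constant $C=2^b\zeta(b)$; the paper's integral-comparison style has the merit of running parallel to its proof of Lemma \ref{lem:9.1}, where the sum extends to $k=\a-1$ and your cancellation trick is not directly available. One small caveat: your closing aside mischaracterizes Lemma \ref{lem:9.1} — that lemma concerns the different kernel $f_{k,\a}(b)=\a^{b-1}k^{-(b-1)}(\a-k)^{-(b+1)}$ and proves a uniform (not logarithmic) bound valid for all $b\ge 1$; the correct consistency check for $b=1$ here is simply that $\sum_{k\le\a/2}\a/(k(\a-k))=\sum_{k\le\a/2}\bigl(1/k+1/(\a-k)\bigr)\sim\log\a$, which indeed diverges. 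This remark is tangential and does not affect the validity of your argument.
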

\begin{proof}
 For $b> 0$ and $\a\geq 2$, the function $x\mapsto x^{-b} (\a-x)^{-b}$ 
is decreasing on $(0,\a/2)$. Therefore 
$$\sum_{k=1}^{\a-1} \hat{f}_{k,\a}(b)\le 2\sum_{k=1}^{\lfloor\a/2\rfloor} \hat{f}_{k,\a}(b)\le
\left( \frac{\a}{\a-1}\right)^{b}+2\int_{1/(2\a)}^{1/2}   \frac{1}{u^{b} (1-u)^{b}}   du .
$$
which implies the lemma. 
\end{proof}
Let us now check condition \eqref{assum-alternative}, which writes here
\eqref{ex3-existence-bis}.
With the choice $h(k)=\pi(k)$, it becomes 
\eqref{ex4-12}. If it is satisfied, it will also be the case for \eqref{assum-alternative}
 with the choice $h(k)=\pi(k)/k$.\\
\item[(ii)] 
The process is attractive if and only if \eqref{at-1}\,\&\,\eqref{at-2} from Lemma \ref{attr-al-gzrp}
hold, since $r(\a)$ is decreasing. 
In the case $h(k)=\pi(k) $, Lemma \ref{attr-al-gzrp}\textit{(a')} gives a necessary 
and sufficient condition for the process to be attractive, namely \eqref{at-3}. 
By \eqref{arranged-at-3} and \eqref{def:ex4-withga}, for $\a\ge 2$ we have, writing from now on  $S_b(\a)$ 
instead of $S(\a)$ to stress
the dependence in $b$, 
\begin{eqnarray} \nonumber
S_b(\a)&= &\pi(1) \sum_{j=1}^{\a-1} \frac{r(\a-1)\cdots r(\a-j)}{r(j-1)\cdots r(1)} \\ 
&=& \pi(1)\frac{\Gamma(b+\a)\ \Gamma(b+1)}{\Gamma(\a)} 
\sum_{j=1}^{\a-1} \frac{\Gamma(j)\ \Gamma(\a-j)}{\Gamma(j+b)\  \Gamma(\a-j+b)} \label{Sab}
\end{eqnarray}

\begin{proposition}\label{prop:ex4-attra}
The process is attractive for $b=1$ and $b=3/2$ and  not attractive 
for all values of $b$ larger or equal to 2. 
\end{proposition}
\begin{proof} \,

First, for all $\a\ge 2$ and all $b>0$,  using that $\Gamma(z+1)=z\Gamma(z)$
for all $z> 0$,  we compute the increment
\begin{eqnarray}\nonumber 
&& S_b(\a+1)-S_b(\a)=\\\nonumber
&&=   \pi(1)\frac{b+\a}{\a} + 
 \pi(1)\sum_{j=1}^{\a-1} \frac{\Gamma(b+1) \Gamma(j)}{\Gamma(j+b)}\Bigl(
\frac{\Gamma(b+\a+1) \Gamma(\a+1-j)}{\Gamma(\a+1) \Gamma(\a+1-j+b)}
-\frac{\Gamma(b+\a) \Gamma(\a-j)}{\Gamma(\a) \Gamma(\a-j+b)}\Bigr)\\
&&= \pi(1) \frac{b+\a}{\a} + 
\pi(1) \sum_{j=1}^{\a-1} \frac{\Gamma(j) \Gamma(\a-j)}{\Gamma(\a)}\;
\frac{\Gamma(b+1) \Gamma(b+\a)}{\Gamma(j+b) \Gamma(\a-j+b) }
\Bigl(\frac{(b+\a) (\a-j)}{\a (\a-j+b)} -1\Bigr)\nonumber \\
&&= \pi(1) \frac{b+\a}{\a} + 
\pi(1) \sum_{j=1}^{\a-1} \frac{\Gamma(j) \Gamma(\a-j)}{\Gamma(\a)}\;
\frac{\Gamma(b+1) \Gamma(b+\a)}{\Gamma(j+b) \Gamma(\a-j+b) }
\Bigl(\frac{ - b j}{\a(\a-j+b)}\Bigr)\nonumber \\
&&= \pi(1) \frac{b+\a}{\a} -
b\; \pi(1) \sum_{j=1}^{\a-1} \frac{j! (\a-j-1)!}{\a!}\;
\frac{\Gamma(b+1) \Gamma(b+\a)}{\Gamma(j+b) \Gamma(\a-j+b+1) }\nonumber 
\end{eqnarray}
For $\a=2$ we get $S_b(3)-S_b(2)= \pi(1) \ge 0$ for all $b>0$. For all $\a\ge 3$, we get
\begin{eqnarray}\nonumber
&&S_b(\a+1)-S_b(\a)\\\label{att-ss}
&&=\pi(1) \Bigl(1+\frac{b(\a-2)}{\a(\a-1)}\Bigr) -
b\; \pi(1) \sum_{j=2}^{\a-1} \frac{j! (\a-j-1)!}{\a!}\;
\prod_{k=1}^{j-1} \frac{b+\a-k}{b+j-k}
\end{eqnarray}
$\bullet$ For $b=1$, the expression \eqref{att-ss} becomes
\begin{equation}
S_1(\a+1)- S_1(\a) = \pi(1) \Bigl(1+\frac{(\a-2)}{\a(\a-1)}
 - \frac{\a-2}{\a-1} \Bigr)=\frac{ 2 \pi(1)}{\a} \ge 0
\end{equation}
\noindent 
Therefore the inequality in \eqref{at-3} is satisfied for all $\a\ge 2$,
 and the process is attractive.  

\medskip\noindent
$\bullet$ We now consider $b=3/2$. The expression  \eqref{att-ss}  reads
\begin{eqnarray}\label{att-ss32}
&&S_{3/2}(\a+1)-S_{3/2}(\a)\nonumber\\
&&\qquad=\pi(1) \Bigl(1+\frac{3 (\a-2)}{2 \a(\a-1)}\Bigr) -
9 \pi(1) \frac{(2\a+1)!}{(\a!)^2} \sum_{j=2}^{\a-1} \frac{(j!)^2 (\a-j-1)!\,(\a-j+1)!}{(2j+1)!(2\a-2j+3)!} 
\end{eqnarray}
We define for all $\a\ge 3$ and all $j\in\{0,\cdots,\a-1\}$
\begin{equation}\label{phiak} 
\varphi_\a(j)= \frac{(3\a-2 j+2)}{6(\a+1)(\a+2)}\; 
\frac{j!\,(j+1)!}{(2 j+1)!} \;\frac{(\a-j)!\,(\a-j-1)!}{(2\a-2 j+1)!}
\end{equation}
In particular
\begin{eqnarray}\label{phiak_1} 
\varphi_\a(1)&=&  \frac{\a!\,(\a-2)!}{6(\a+1)(\a+2)(2\a-1)!}\\\label{phiak_a-1} 
\varphi_\a(\a -1)&=& \frac{\a!\,(\a-1)!(\a+4)}{36(\a+1)(\a+2)(2\a-1)!}
\end{eqnarray}
Now for all $j\in\{1,\cdots,\a-1\}$, we compute
\begin{eqnarray}\label{att-ss32}
&&\varphi_\a(j)-\varphi_\a(j-1)\nonumber\\
&&\qquad  =\frac{1}{3(\a+1)(\a+2)}\;\frac{(j!)^2 (\a-j-1)!\,(\a-j+1)!}{(2j+1)!(2\a-2j+3)!}\;\nonumber\\
&&\qquad  \qquad\times\Bigl((j+1)(3 \a-2 j+2)(2\a-2 j+3)-(2 j+1)(\a-j)(3\a-2 j+4)\Bigr)\nonumber\\
&&\qquad  = \frac{(j!)^2 (\a-j-1)!\,(\a-j+1)!}{(2j+1)!(2\a-2j+3)!}\;\nonumber
\end{eqnarray}
Using this expression in \eqref{att-ss32}, and then \eqref{phiak_1}, \eqref{phiak_a-1} gives
\begin{eqnarray}\label{att-ss32}
&&S_{3/2}(\a+1)-S_{3/2}(\a)\nonumber\\
&&\qquad=\pi(1) \Bigl(1+\frac{3 (\a-2)}{2 \a(\a-1)}\Bigr) -
9 \pi(1) \frac{(2\a+1)!}{(\a!)^2} \sum_{j=2}^{\a-1} \bigl(\varphi_\a(j)-\varphi_\a(j-1)\bigr)\nonumber\\
&&\qquad=\pi(1) \Bigl(1+\frac{3 (\a-2)}{2 \a(\a-1)}\Bigr) -
9 \pi(1) \frac{(2\a+1)!}{(\a!)^2} \bigl(\varphi_\a(\a-1)-\varphi_\a(1)\bigr)\nonumber\\
&&\qquad=\pi(1) \Bigl(1+\frac{3 (\a-2)}{2 \a(\a-1)} -
 \frac{(2\a+1) (\a^2+3 \a-10)}{2(\a-1)(\a+1)(\a+2)} \Bigr)\nonumber\\
&&\qquad=\frac{3\pi(1) (3\a+2)}{ \a(\a+1)(\a+2)}\nonumber\\
&&\qquad\ge 0
\end{eqnarray}
Thus, the inequalities \eqref{at-3} are satisfied and  the process is attractive for 
 $b=3/2$.\\

\noindent
$\bullet$ For the last case, for any given $b\ge 2$, we just compute
\begin{eqnarray}\label{att-ss10}
S_b(11)-S_b(10)&=&
\pi(1)\Bigl(1 + \frac{4 b}{45} - \frac{37 b (b+9)}{360 (b+1)} - \frac{31 b (b+9)(b+8)}{2520 (b+1)(b+2)}
\nonumber\\
&&\quad\qquad- \frac{ b (b+9)(b+8)(b+7)}{280 (b+1)(b+2)(b+3)}\nonumber\\
&&\quad\qquad
 - \frac{ b (b+9)(b+8)(b+7)(b+6)}{504 (b+1)(b+2)(b+3)(b+4)}\Bigr)\nonumber\\
&=&
\pi(1)
\frac{-750+(2-b)(4155+3039b+955 b^2+151 b^3+10 b^4)}{315 (b+1) (b+2)(b+3)(b+4)} <0 
\end{eqnarray}
Therefore $S_b(11)-S_b(10)<0$  for $b\ge 2$ and the process is not attractive for $b\ge 2$. 
\end{proof}
\begin{remark}\label{rk:plus}
Equation \eqref{att-ss10} shows that the process is not attractive also 
for values of $b$ slightly smaller than $2$.
In fact, further numerical computations also indicate that the quantity 
\eqref{att-ss} for larger values of $\a$ gets negative for smaller values of $b$. For instance, 
$S_b(201)-S_b(200)$ is found to be negative for $b\ge 1.55$. 
Thus we conjecture that the process is attractive for all values of $b \in (0, 3/2]$
and not attractive for all values of  $b>3/2$.
\end{remark}
\item[(iii)]  
 By part (iii) in Example \ref{trojka}, 
we have a family \eqref{ex3-invar} of product, translation-invariant, 
invariant probability measures where 
\begin{eqnarray*} 
&& \vfi_c=1,\\
&& Z_{1} <\infty \text{ {  if and only if } } \ b>1,\\
&&  \rho_c<\infty  \text{ {  if and only if}  } \ b>2.
\end{eqnarray*}
Since $\sum_{n\ge 1} n \pi(n) =+\infty $ when $b\leq 2$, 
we have ${\rm Rad}(Z')=(0,1]$ for  $b>2$ and ${\rm Rad}(Z')=(0,1)$ 
for $1<b\leq 2$.

Let us consider the case $1<b\leq 2$. If moreover
$h(k)=\pi(k) $, we have by  \eqref{ex4-12} with Lemma~\ref{lem:9.2} that 
$ \sum_{k\leq \a} g^k_{\a}  $ 
is bounded, hence,  by \eqref{ex4-12}, 
 assumption \eqref{assum-alternative} is satisfied.
Therefore, by Corollary \ref{family}, if assumption (\ref{A1}) is satisfied,
then the probability measure $\bar{\mu}_1$ is invariant, and 
 $\{\bar{\mu}_{\vfi}: \vfi\in  (0,1]\}$ (by \eqref{ex3-invar}) 
with $\pi$ given by \eqref{def:ex4-withga} are invariant measures.

In this example, we chose $\pi$  such that the process with $b>2$ exhibits  
condensation in the first set-up whereas the process with $b>1$  
exhibits  condensation in the second set-up (that is, on a fixed finite volume), 
which follows from results in \cite{FeLaSi,GSS}  (see Section \ref{s-discussion}). 
 We proved coexistence of attractiveness and condensation in the second set-up for
$b=3/2$ (cf. Proposition \ref{prop:attra+cond}). 
\end{itemize} 
\bigskip

\begin{example}\label{ex5}
Consider the MM-TP with $\gkab = \1_{[k\leq\a]} g^k_{*,\b}$.
\end{example}
\begin{itemize} \itemsep2pt 
\item[(i)]   Condition \eqref{assum} writes: there exists $C>0$ such that,
 for all $\b,\a\geq 0$, 
$$\sum_{k=1}^{\a} k\, |g^{k}_{*,\b+1} - g^{k}_{*,\b}|\ +\ \a g^{\a}_{*,\b} \ \leq C. $$
Note that if $g^{k}_{*,\b}$ is independent of $k$, this condition fails.\\
\item[(i)'] 
Condition \eqref{un-assum} writes: for all $\b,\a\geq 0$, $\sum_{k=1}^{\a} k g^k_{*,\b} \leq C (\a +\b)$.
\par\noindent
Condition \eqref{assum-alternative} writes: for all $\b,\a\geq 0$, $\sum_{k=1}^{\a} g^k_{*,\b} \leq C$.\\
\item[(ii)] The process is attractive if and only if  conditions \eqref{atr:GTP-1}--\eqref{atr:GTP-2}
are satisfied. 
Note that $g^{\a}_{*,\b} \leq g^{\a-1}_{*,\b+1}$ for $ \a\geq 1,\,\,  \b\geq 0,\, k\geq 0,$ 
is sufficient for \eqref{atr:GTP-2}. \\
\item[(iii)]  
If conditions \eqref{inv-tt}--\eqref{tp:hrec} or conditions \eqref{eq:weak-phil} on rates are satisfied
then there exist  product invariant probability measures given by formulas \eqref{inv-tar}--\eqref{inv-tar2}. 
\end{itemize} 
The MM-TP dual of Example \ref{gen-zrp} has rates \eqref{target-g.e.}
(see Remark \ref{rk:duality-MM-ZRP-MM-TP}). Contrary to duality for single-jump
dynamics which conserves attractiveness (see Remark \ref{rk:single-attra}),
we have here a non-attractive dynamics (by Proposition \ref{no-attract-cond-single-jump-and-mm-tp})
dual of a one which can be attractive (by Proposition \ref{prop:ex4-attra}). 
\\

\section{Appendix: Construction of the process} \label{sec:construction}
 In this section we shall prove Proposition \ref{prop:generator}, 
Theorem \ref{ex-theorem}, and Proposition \ref{intL=0}. 

For our construction, Consider an approximation of the infinite set 
$\X$ by an increasing sequence of finite sets 
\begin{equation}\label{seqX_n}
 \X_1 \subset \X_2 \subset \ldots\ \subset\X, \qquad\bigcup_n \X_n =\X 
\end{equation}
and 
 let us define (as in \cite{liggett-spitzer}) a restriction of
the transition probabilities $(p(x,y),x,y\in \X)$ on $\X_n$ by
\begin{equation}\label{p_n}
p_n(x,y)= \left\{
                      \begin{array}[c]{lll}
                p(x,x) + \sum_{z\not\in \X_n}p(x,z) & \quad x=y,\ x\in\X_n  \\
                1  & \quad x=y,\ x\not\in\X_n \\
                p(x,y)  & \quad x\neq y,\ x,y\in\X_n \\
                0  & \quad {\rm otherwise.}
                      \end{array}
          \right.
\end{equation}
The transition probabilities $(p_n(x,y),x,y\in \X_n)$ satisfy \eqref{Mconst} 
with $M$ replaced by $M+1$, because
$p_n(x,y)\le p(x,y)+\1_{[x=y]}$. They induce a restriction of the dynamics 
of MMP to $\X_n$, whose generator 
$\L_n$ is given by (\ref{generator}) with $p(\cdot,\cdot)$ replaced by  $p_n(\cdot,\cdot)$: 
for $f:\N^{\X}\rightarrow\R,\, f\in\Lip,\eta\in\N^{\X}$,
\begin{eqnarray} \nonumber   
\L_n f(\eta)&=& \sum_{x, y\in \X}\sum_{k>0} \ p_n(x,y)\
                \gen^k_{\eta(x),\eta(y)}
                \left(f\left(\etb\right)-f\left(\eta\right)\right)\\
&=&\sum_{x\neq y\in \X_n}\sum_{k>0} \ p(x,y)\
                \gen^k_{\eta(x),\eta(y)}
                \left(f\left(\etb\right)-f\left(\eta\right)\right).\label{ngenerator}
\end{eqnarray}
Since particles do not move outside $\X_n$,  this 
corresponds to a Markov process on the countable state space $\N^{\X_n}$. 
Hence there is a well defined  Markov semigroup of operators $(S_n(t),t\geq 0)$ 
on functions $f\in\Lip$, associated to $\L_n$:
  \begin{equation}     \label{nsemigroup}
      S_n(t)f(\eta) = f(\eta) + \int_0^t \L_n S_n(s) f (\eta) ds    .
  \end{equation}
Similarly, the  dynamics of the coupled process restricted to $\X_n$ has generator 
$\overline{\L}_{n}$ 
 given by (\ref{couplingL}) with $p(\cdot,\cdot)$ replaced by  $p_n(\cdot,\cdot)$: 
for $(\eta,\zeta)\in\XX \times \XX$, $h\in\overline{\Lip}$,
\begin{equation}\label{ncouplingL}
\overline{\L}_{n} h(\eta,\zeta) = \sum_{x\neq y\in \X_n} p_n(x,y)\, \sum_{k\geq 0} \sum_{l\geq 0}  
                                 G^{k,l}_{\eta(x),\eta(y),\zeta(x),\zeta(y)} 
\left(h\left(\etb,\zetb\right)-h\left(\eta,\zeta\right)\right).
\end{equation} 
Our goal is to define, in Subsection \ref{ex4}, 
limits of the generators $\L_n$ and semigroups $S_n(t)$, as $n\rightarrow\infty$. 
For this we will as a first step, in Subsection \ref{ex1},  
analyze the restriction of $(\eta_t)_{t\ge 0}$ to the finite
set of sites $\X_n$, keeping by a misuse of language 
the same notations $\L_n,S_n(t),(\eta_t)_{t\ge 0},\overline{\L}_{n}$ 
for the dynamics of state space $\N^{\X_n}$. 
Subsection \ref{ex3} is devoted to preliminary results.

\subsection{Preliminaries}  \label{ex3}
In this subsection we explicit some statements done in Section \ref{s-model},
and derive a key inequality about the coupling rates.\\ \\
$\bullet$ Let us exhibit a function $a$ on $\X$ satisfying \eqref{Mconst} and \eqref{sum-a_x-finite}.
Choose any $M>1+m_p$ (cf. \eqref{mpconst}) 
and define, for all $x, y\in \X$, $$\tilde{p}(x,y)=\frac{1}{1+m_p}\bigl(p(x,y)+p(y,x)\bigr).$$
Fix a reference site  $x_0\in\X$ and set 
\begin{equation}\label{def-a_x}
a_x= \sum_{n\ge 0} \left(\frac{1+m_p}{M}\right)^n \tilde{p}^{(n)}(x,x_0)
\end{equation}
where $\tilde{p}^{(n)}(.,.)$ are the $n$-step transition probabilities corresponding to
$\tilde{p}(.,.)$:
\begin{equation}\label{def:p^n}
\tilde{p}^{(0)}(x,x_0)=\1_{[x=x_0]}\quad\hbox{ and }\quad
\tilde{p}^{(n)}(x,x_0)=\sum_{x_1\in\X}\tilde{p}(x,x_1)\tilde{p}^{(n-1)}(x_1,x_0)\hbox{ for } n\ge 1.
\end{equation}
Then \eqref{Mconst} holds, as well as \eqref{sum-a_x-finite}.
\\ \\
$\bullet$ Note that the set $\Lip$ includes all bounded cylinder functions. 
More precisely let $f$  on  $\N^{\X}$ be such a function. It 
is  bounded by a value $m_f$, and its support is a finite set $V_f\subset \X$ 
such that $f$ depends only on values $(\eta(x):x\in V_f)$ and not on the whole $\eta\in \N^{\X}$. 
We can write 
\begin{equation}\label{cyl-bdd}
\|f(\eta) -f(\zeta) \| \leq 2m_f 
\sum_{x\in V_f} |\eta(x) - \zeta(x)| \leq 2m_f \|\eta-\zeta\| / \min_{x\in V_f} a_x.
\end{equation}
$\bullet$ Next Lemma provides the analogue for the coupled process
of   \eqref{assum}   for the MMP, and is crucial for the following results. 
\begin{lemma} \label{lem:key}
Under assumption   \eqref{assum},  
the rates $G_{\alpha,\b,\gamma,\delta}^{k,l}$ satisfy, for all $\a,\b,\g,\delta,k,l\geq 0 $,
\begin{equation}  \label{key}
   \sum_{k=0}^{\a}\sum_{l=0}^{\gamma}|k-l|\,G_{\alpha,\b,\gamma,\delta}^{k,l}\ 
   \leq\ 2C \bigl(|\a - \gamma|  + |\b-\delta|       \bigr).
\end{equation} 
\end{lemma}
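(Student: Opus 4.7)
The plan is to reinterpret the coupling rates \eqref{coupling} geometrically, after which the inequality reduces to \eqref{assum} via a telescoping argument. First I would check by direct simplification of \eqref{coupling} that, for $k,l\ge 1$,
\begin{equation*}
G^{k,l}_{\a,\b,\g,\delta}=\bigl(\sigb^{k-1}_{\a,\b}\wedge\sigb^{l-1}_{\g,\delta}-\sigb^k_{\a,\b}\vee\sigb^l_{\g,\delta}\bigr)^+,
\end{equation*}
that is, $G^{k,l}_{\a,\b,\g,\delta}$ is the length of the intersection on the real line of the two intervals $(\sigb^k_{\a,\b},\sigb^{k-1}_{\a,\b}]$ and $(\sigb^l_{\g,\delta},\sigb^{l-1}_{\g,\delta}]$. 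The computation is elementary: the first factor in \eqref{coupling} simplifies to $(\sigb^{k-1}_{\a,\b}-\sigb^k_{\a,\b}\vee\sigb^l_{\g,\delta})^+$ and the second one is symmetric.

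Since, for every $n\ge 0$, the family of intervals $\{(\sigb^k_{\a,\b},\sigb^{k-1}_{\a,\b}]:k>n\}$ partitions $(0,\sigb^n_{\a,\b}]$ and symmetrically in $l$, the above formula yields by telescoping the key identity
\begin{equation*}
T^n:=\sum_{k>n}\sum_{l>n}G^{k,l}_{\a,\b,\g,\delta}\;=\;\sigb^n_{\a,\b}\wedge\sigb^n_{\g,\delta},\qquad n\ge 0.
\end{equation*}
Combined with the marginal relations $\sum_{l\ge 0}G^{k,l}_{\a,\b,\g,\delta}=g^k_{\a,\b}$ for $k\ge 1$ and its symmetric counterpart, which give $\sum_{k>n,\,l\ge 0}G^{k,l}_{\a,\b,\g,\delta}=\sigb^n_{\a,\b}$, this produces
\begin{equation*}
\sum_{k>n\ge l}G^{k,l}_{\a,\b,\g,\delta}+\sum_{l>n\ge k}G^{k,l}_{\a,\b,\g,\delta}\;=\;\sigb^n_{\a,\b}+\sigb^n_{\g,\delta}-2T^n\;=\;|\sigb^n_{\a,\b}-\sigb^n_{\g,\delta}|.
\end{equation*}

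With these ingredients in hand, the telescoping identity $|k-l|=\sum_{n\ge 0}\bigl(\1_{[k>n\ge l]}+\1_{[l>n\ge k]}\bigr)$ and Fubini lead to
\begin{equation*}
\sum_{k,l}|k-l|\,G^{k,l}_{\a,\b,\g,\delta}\;=\;\sum_{n\ge 0}|\sigb^n_{\a,\b}-\sigb^n_{\g,\delta}|\;\le\;\sum_{n\ge 0}\sum_{k>n}|g^k_{\a,\b}-g^k_{\g,\delta}|\;=\;\sum_{k\ge 1}k\,|g^k_{\a,\b}-g^k_{\g,\delta}|,
\end{equation*}
which is bounded by $C(|\a-\g|+|\b-\delta|)$ by \eqref{assum}, and hence a fortiori by $2C(|\a-\g|+|\b-\delta|)$.

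The hard part will be the geometric rewriting of $G^{k,l}_{\a,\b,\g,\delta}$ at the outset, as everything that follows is either telescoping or a direct application of \eqref{assum}. This rewriting also clarifies why the coupling of \cite{GS} is the natural replacement for basic coupling when $k\ge 2$: it performs basic (Hahn--Jordan) coupling of the survival functions $\sigb^{k-1}_{\a,\b}$ and $\sigb^{l-1}_{\g,\delta}$ on the real line, rather than of the jump rates $g^k_{\a,\b}$ and $g^l_{\g,\delta}$.
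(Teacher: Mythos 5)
Your proposal is correct, and it takes a genuinely different route from the paper. The simplification at the outset is right: the first factor in \eqref{coupling} equals $(\sigb^{k-1}_{\a,\b}-\sigb^{k}_{\a,\b}\vee\sigb^{l}_{\g,\delta})^+$, the second equals $(\sigb^{l-1}_{\g,\delta}-\sigb^{k}_{\a,\b}\vee\sigb^{l}_{\g,\delta})^+$, and their minimum is indeed the overlap $(\sigb^{k-1}_{\a,\b}\wedge\sigb^{l-1}_{\g,\delta}-\sigb^{k}_{\a,\b}\vee\sigb^{l}_{\g,\delta})^+$; since $\sigb^{k}_{\a,\b}=0$ for $k\ge\a$, the partition claim and hence $T^n=\sigb^{n}_{\a,\b}\wedge\sigb^{n}_{\g,\delta}$ follow, and the rest is the telescoping of $|k-l|$ plus \eqref{assum}. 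The paper proceeds quite differently: it regroups the double sum into the quantities $R_i$ and $R_i^*$ (equation \eqref{sum_1}), uses the telescoped form of the coupling rates from \cite[Lemma 3.6]{GS}, and a case analysis to show $|R_i-R_i^*|\le 2\,|\gen^i_{\a,\b}-\gen^i_{\g,\delta}|$, which yields \eqref{key2} with the constant $2C$. Your argument buys two things: an exact identity $\sum_{k,l}|k-l|\,G^{k,l}_{\a,\b,\g,\delta}=\sum_{n\ge 0}|\sigb^{n}_{\a,\b}-\sigb^{n}_{\g,\delta}|$ in place of a case-by-case estimate, hence the sharper constant $C$ (a fortiori the stated $2C$ bound of \eqref{key}), and a transparent picture of the \cite{GS} coupling as basic coupling of the tail functions. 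The one ingredient you invoke without proof is the marginal relation $\sum_{l\ge 0}G^{k,l}_{\a,\b,\g,\delta}=\gen^k_{\a,\b}$ (and its symmetric counterpart); this is the defining property of the coupling rates, established in \cite[Proposition 2.11]{GS}, and it also drops out of your own rewriting in one line, since $G^{k,0}_{\a,\b,\g,\delta}=(\sigb^{k-1}_{\a,\b}-\sigb^{k}_{\a,\b}\vee\sigb^{0}_{\g,\delta})^+$ is exactly the part of the interval $(\sigb^{k}_{\a,\b},\sigb^{k-1}_{\a,\b}]$ lying above $\sigb^{0}_{\g,\delta}$ — so it is not a gap, but it is worth stating explicitly rather than as an unproved "marginal relation".
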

\begin{proof} 
We  have 
\begin{eqnarray*}
\sum_{k=0}^{\a}\sum_{l=0}^{\g} |k-l|\gg &=& 
\sum_{k=0}^{\a}\sum_{l=0}^{\g}\1_{[l<k]} (k-l) \gg - \sum_{k=0}^{\a}\sum_{l=0}^{\g}\1_{[l\geq k]} (k-l) \gg    
\\
&=& \sum_{l=0}^{\g} l \left( \sum_{k=0}^{l} \1_{[k\leq\a]} \gg 
-  \1_{[l<\a]} \!\!\sum_{k=l+1}^{\a}\! \gg\right)  \\&&-
  \sum_{k=0}^{\a} k \left( \1_{[k\leq\g]}\sum_{l=k}^{\g} \gg 
  -  \sum_{l=0}^{k-1} \1_{[l\leq\g]}\gg\right)   
\\ 
&=& \sum_{l=1}^{(\a-1)\wedge\g} l \left( \sum_{k=0}^{l} \gg 
-  \!\sum_{k=l+1}^{\a}\! \gg\right) \ + \ \1_{[\a\leq\g]} \sum_{l=\a}^\g l g^l_{\g,\delta} 
\\ && \ -\   \sum_{k=1}^{\a\wedge\g} k \left(\sum_{l=k}^{\g} \gg -  \sum_{l=0}^{k-1}\gg\right) 
   +\ \1_{[\a>\g]} \sum_{k=\g+1}^\a k g^k_{\a,\b} \ \ 
\\
&=&  \sum_{i=1}^{\a\wedge\g} i R_i \ \ + \ \ \1_{[\a<\g]} 
\sum_{i=\a+1}^{\g} \! i R_i \ \ - \ \ \sum_{i=1}^{\a\wedge\g}  i R_i^*\ \ -      
     \1_{[\a>\g]}\sum_{i=\g+1}^{\a}\! i  R_i^*
\end{eqnarray*}
where we have denoted  
$$ 
R_l   =  \begin{cases}  \ \sum\limits_{k=0}^{l}  \gg -  \sum\limits_{k=l+1}^{\a} \gg, 
& \text{for } l=0,\dots,\a-1, 
         \\[1mm]        \      g^l_{\g,\delta}                               
         & \text{for } l=\a,\dots,\g \text{ if }\a\leq\g,
                             \end{cases}
$$
$$
\ \ \ R_k^*  =   \begin{cases} \ \sum\limits_{l=k}^{\g} \gg -  \sum\limits_{l=0}^{k-1}  \gg, 
& \ \ \text{for } k=0,\dots,\g,
           \\[1mm]       \     - g^k_{\a,\b}                                 
           & \ \ \text{for } k=\g+1,\dots,\a \text{ if }\a>\g.
\end{cases}
$$
Therefore using \eqref{rateg}
\begin{equation} \label{sum_1}
\sum_{k=0}^{\a}\sum_{l=0}^{\g} |k-l|G_{\a,\b,\g,\delta}^{k,l} = 
\sum_{i=1}^{\a\wedge\g} i (R_i  - R_i^* ) \ \ 
+  \1_{[\a<\g]}\sum_{i=(\a\wedge\g)+1}^{\a\vee\g} \! i\,(\gen^i_{\g,\delta} - \gen^i_{\a,\b})    \ +    
                                                 \1_{[\a>\g]}\sum_{i=(\a\wedge\g)+1}^{\a\vee \g}\! 
                                                 i\,(\gen^i_{\a,\b} - \gen^i_{\g,\delta}).
\end{equation}
\noindent Using notation $\sigb^i_{\a,\b}=\sum\limits_{k>i}\gkab$ and a telescopic argument 
\cite[Lemma 3.6]{GS} for partial sums of $G_{\a,\b,\g,\delta}^{k,l}$ gives, 
for  $\a,\b,\gamma,\delta\geq 0$ and $1\leq k\leq \a$, $1\leq l\leq \g$, 
\begin{eqnarray*}
\nonumber\gg &=& \gen^k_{\a,\b}\wedge \Bigl( \sigb^{l-1}_{\g,\delta}   
-  \sigb^{k}_{\a,\b}  \wedge   \sigb^{l-1}_{\g,\delta}      \Bigr)    - 
        \gen^k_{\a,\b}\wedge \Bigl( \sigb^{l}_{\g,\delta}   
        -  \sigb^{k}_{\a,\b}  \wedge   \sigb^{l}_{\g,\delta}      \Bigr)  
\\
\text{or} \makebox[1cm]{} && 
\\
\nonumber\gg  &=&  \gen^l_{\g,\delta}\wedge \Bigl( \sigb^{k-1}_{\a,\b}  
 -  \sigb^{k-1}_{\a,\b}  \wedge   \sigb^{l}_{\g,\delta}      \Bigr)    - 
          \gen^l_{\g,\delta}\wedge \Bigl( \sigb^{k}_{\a,\b}  
           -  \sigb^{k}_{\a,\b}  \wedge   \sigb^{l}_{\g,\delta}      \Bigr)  
\end{eqnarray*}
Then, for $i=1,\dots,\a\wedge\g$, we obtain  
\begin{eqnarray*}
R_i &=& \gen^i_{\g,\delta} - 2 \Bigl[ \gen^i_{\g,\delta} \wedge \Bigl(\sigb^i_{\a,\b}
 - \sigb^i_{\a,\b}\wedge\sigb^{i}_{\g,\delta}\Bigr)    \Bigr]    \\
&=& 
\left\{
\begin{array}{ll}
\gen^i_{\g,\delta}   & \text{ if }  \sigb^i_{\g,\delta} \geq \sigb^i_{\a,\b}    \\[.1cm]
\gen^i_{\g,\delta} - 2\bigl(\sigb^i_{\a,\b} - \sigb^{i}_{\g,\delta}\bigr) 
                     & \text{ if }  \sigb^i_{\g,\delta} < \sigb^i_{\a,\b} < \sigb^{i-1}_{\g,\delta} \\[.1cm]
-\gen^i_{\g,\delta}  & \text{ if }  \phantom{\sigb^i_{\g,\delta} < \ } \sigb^i_{\a,\b} 
\geq \sigb^{i-1}_{\g,\delta} 
\end{array}
\right.  
\end{eqnarray*}
\begin{eqnarray*}
R_i^* &=& 2 \Bigl[ \gen^i_{\a,\b} \wedge \Bigl(\sigb^{i-1}_{\g,\delta} 
- \sigb^i_{\a,\b}\wedge\sigb^{i-1}_{\g,\delta}\Bigr)    \Bigr] -\gen^i_{\a,\b}    \\
&=& 
\left\{
\begin{array}{ll}
- \gen^i_{\a,\b}   & \text{ if } \sigb^i_{\a,\b} \geq  \sigb^{i-1}_{\g,\delta}  \\[.1cm]
2\bigl(\sigb^{i-1}_{\g,\delta} - \sigb^i_{\a,\b}\bigr) - \gen^i_{\a,\b} 
                   & \text{ if } \sigb^i_{\a,\b} <  \sigb^{i-1}_{\g,\delta} < \sigb^{i-1}_{\a,\b} \\[.1cm]
\gen^i_{\a,\b}     & \text{ if } \phantom{\sigb^i_{\g,\delta} < \ } \sigb^{i-1}_{\g,\delta} 
\geq \sigb^{i-1}_{\a,\b}
\end{array}
\right.  
\end{eqnarray*}
Hence    
\begin{eqnarray*}
R_i - R_i^*= 
\left\{
\begin{array}{ll}
\gen^i_{\a,\b} - \gen^i_{\g,\delta} &     \text{ if }  \sigb^i_{\a,\b}  \geq \sigb^{i-1}_{\g,\delta} \\[.1cm]
\gen^i_{\g,\delta} - \gen^i_{\a,\b} &     \text{ if }  \sigb^i_{\a,\b}  \leq \sigb^{i}_{\g,\delta} \ \&\  
                                                       \sigb^{i-1}_{\a,\b}  
                                                       \leq \sigb^{i-1}_{\g,\delta}  \\[.1cm]
\gen^i_{\a,\b} - \gen^i_{\g,\delta} 
&     \text{ if }  \sigb^{i}_{\g,\delta}<\sigb^i_{\a,\b} 
< \sigb^{i-1}_{\g,\delta} < \sigb^{i-1}_{\a,\b}\\[.1cm]
\gen^i_{\g,\delta} - \gen^i_{\a,\b} - 2(\sigb_{\a,\b}^i - \sigb_{\g,\delta}^i)
                   &   \text{ if }  \sigb^{i}_{\g,\delta} <\sigb^i_{\a,\b} < \sigb^{i-1}_{\g,\delta} \ 
                   \&\  \sigb^{i-1}_{\a,\b} \leq \sigb^{i-1}_{\g,\delta} \\[.1cm] 
\gen^i_{\a,\b} - \gen^i_{\g,\delta} - 2(\sigb_{\g,\delta}^i 
- \sigb_{\a,\b}^i)   &     \text{ if }  \sigb^i_{\a,\b}
\leq  \sigb^{i}_{\g,\delta}\ \&\ \sigb^i_{\a,\b}<\sigb^{i-1}_{\g,\delta}
<\sigb^{i-1}_{\a,\b}.\\[.1cm]
\end{array}
\right.
\end{eqnarray*} 
Conditions in the last but one line imply that 
$0< \sigb^i_{\a,\b} - \sigb^{i}_{\g,\delta} \leq \gen^i_{\g,\delta} - \gen^i_{\a,\b} $
and similarly conditions in the last line imply that 
$0< \sigb^{i}_{\g,\delta}- \sigb^i_{\a,\b} \leq \gen^i_{\a,\b}-\gen^i_{\g,\delta} $.
Therefore we can conclude that 
$|R_i -R_i^* |\leq 2|\gen^i_{\a,\b} - \gen^i_{\g,\delta}| $.
{}From (\ref{sum_1}) and using assumption (\ref{assum}) then we obtain the desired inequality
\begin{equation}\label{key2} 
\sum_{k=0}^{\a}\sum_{l=0}^{\g} |k-l|\,G_{\a,\b,\g,\delta}^{k,l} \,\leq\,  
2\sum_{i=1}^{\a\vee\g} i\, |\gen^i_{\a,\b} - \gen^i_{\g,\delta}| \, \leq \,2C\left(|\a-\g|+|\b-\delta|\right).
\end{equation} 
\end{proof} 
%
\subsection{Finite volume}   \label{ex1} 
Because $\X_n$ is finite,  for $\eta,\eta'\in\N^{\X_n}$, if 
$N=\sum_{x\in\X_n}\eta(x),N'=\sum_{x\in\X_n}\eta'(x)$, then  the Markov process $(\eta_t)_{t\geq 0}$ 
starting from $\eta_0=\eta$ (resp. the coupled process $(\eta_t,\eta'_t)_{t\geq 0}$ 
starting from $(\eta_0,\eta'_0)=(\eta,\eta')$)
has the finite state space $\mathrm{S}=\{\xi\in\N^{\X_n}:\sum_{x\in\X_n}\xi(x)=N\}$
(resp. $\overline{\mathrm{S}}=\{(\xi,\xi')\in\N^{\X_n}\times\N^{\X_n}:\sum_{x\in\X_n}\xi(x)=N,
\sum_{x\in\X_n}\xi'(x)=N'\}$). Hence we can write  the semi-group as an exponential
of a $Q$-matrix, that is 
\begin{eqnarray}
S_n(t)f(\eta)&=&\mathbb{E}^{\eta}(f(\eta_t))
=\!\!\sum_{\zeta\in\mathrm{S}} f(\zeta) \mathbf{p}_t(\eta,\zeta)\label{S_n-fini}
\\
\overline{S}_n(t)h(\eta,\eta')&=&\mathbb{E}^{(\eta,\eta')}(h(\eta_t,\eta'_t))
=\!\!\sum_{(\zeta,\zeta')\in\overline{\mathrm{S}}} h(\zeta,\zeta') 
\overline{\mathbf{p}}_t(\eta,\eta';\zeta,\zeta'),\quad\quad\mbox{ where }\label{cS_n-fini}
\\\label{pt-rate}
\mathbf{p}_t(\eta,\zeta)&=& \sum_{j=0}^{\infty} \frac{t^j}{j!}q^{(j)}(\eta,\zeta)\\\label{q-rate}
q(\eta,\zeta)&=&\sum_{x,y\in\X_n}\sum_{k=1}^{\eta(x)} p_n(x,y)\, 
\gen^k_{\eta(x),\eta(y)} \left( \1_{[\zeta=\etb]} -\1_{[\zeta=\eta]} \right)\\\label{pt-rate-coupl}
\overline{\mathbf{p}}_t(\eta,\eta';\zeta,\zeta')
&=& \sum_{j=0}^{\infty} \frac{t^j}{j!}\overline{q}^{(j)}(\eta,\eta';\zeta,\zeta')\\\nonumber
\overline{q}(\eta,\eta';\zeta,\zeta')&=&\sum_{x,y\in\X_n}\sum_{k=1}^{\eta(x)}
\sum_{l=1}^{\eta'(x)} p_n(x,y)\, G^{k,l}_{\eta(x),\eta(y),\eta'(x),\eta'(y)} \\
&&\qquad\qquad\qquad\times\left( \1_{[(\zeta,\zeta')=(\etb,\etb')]} 
-\1_{[(\zeta,\zeta')=(\eta,\eta')]} \right).\label{q-rate-coupl}
\end{eqnarray} 
Recall the constants $m_p,C,M$  introduced in 
\eqref{mpconst}, \eqref{Mconst}, \eqref{un-assum} and \eqref{assum}. 
\begin{lemma}\label{bound1}  
Assume that the rates 
satisfy \eqref{un-assum}. We have, for all $f\in\Lip$, $\eta\in\N^{\X_n},\,n\geq 1$,
$$  |\L_n f(\eta) | \leq\,  L_f C\, (1+m_p+M)\,\| \eta \|. $$  
\end{lemma}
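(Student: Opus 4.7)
The plan is to directly estimate $|\L_n f(\eta)|$ term-by-term, using in turn the Lipschitz property of $f$, the growth condition \eqref{un-assum} on the rates, and then the bistochasticity/sub-invariance properties \eqref{mpconst}--\eqref{Mconst} of the function $a$.

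First, I would use the Lipschitz property. Since $\mathcal{S}^k_{x,y}\eta$ differs from $\eta$ only at sites $x$ and $y$, with changes of size $k$ at each, one has the exact identity $\|\mathcal{S}^k_{x,y}\eta - \eta\| = k(a_x + a_y)$, so $|f(\mathcal{S}^k_{x,y}\eta) - f(\eta)| \leq L_f k(a_x + a_y)$. Inserting this into \eqref{ngenerator} (in its equivalent form over $x\neq y\in\X_n$ with $p$ rather than $p_n$), and using \eqref{un-assum} to carry out the sum over $k$, gives
\begin{equation*}
|\L_n f(\eta)| \;\leq\; L_f \sum_{x\neq y\in\X_n} p(x,y) (a_x+a_y)\sum_{k=1}^{\eta(x)} k\, g^k_{\eta(x),\eta(y)}
\;\leq\; L_f\, C \sum_{x\neq y\in\X_n} p(x,y)(\eta(x)+\eta(y))(a_x+a_y).
\end{equation*}

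Next, since all summands are non-negative, I would extend the sum to all $x,y\in\X$ and expand the product $(\eta(x)+\eta(y))(a_x+a_y)$ into four pieces, which I then handle as follows. The term $\sum_{x,y} p(x,y)\eta(x)a_x$ equals $\|\eta\|$ since $p(x,\cdot)$ is a probability. The term $\sum_{x,y} p(x,y)\eta(y)a_y$ equals $\sum_y \eta(y)a_y \sum_x p(x,y)$ which is bounded by $m_p\|\eta\|$ by \eqref{mpconst}. The key step is the treatment of the two cross terms, which I would combine before bounding: relabeling indices in the second,
\begin{equation*}
\sum_{x,y} p(x,y)\eta(x)a_y + \sum_{x,y} p(x,y)\eta(y) a_x \;=\; \sum_x \eta(x)\Bigl[\sum_y p(x,y)a_y + \sum_y a_y p(y,x)\Bigr] \;\leq\; M\|\eta\|
\end{equation*}
by assumption \eqref{Mconst}. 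Summing the four contributions yields $|\L_n f(\eta)| \leq L_f C(1+m_p+M)\|\eta\|$, as claimed.

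There is no serious obstacle: the only subtle point, and the reason the constant is $1+m_p+M$ rather than $1+m_p+2M$, is that the two cross terms must be grouped together so that assumption \eqref{Mconst}, which bounds the \emph{sum} of the two quantities $\sum_y p(x,y)a_y$ and $\sum_y a_y p(y,x)$ by $Ma_x$, can be applied exactly once instead of bounding each summand separately by $M$. All remaining manipulations are routine rearrangements of non-negative sums.
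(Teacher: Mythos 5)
Your proof is correct and follows essentially the same route as the paper: bound $|f(\mathcal{S}^k_{x,y}\eta)-f(\eta)|$ by $L_f\,k(a_x+a_y)$, sum over $k$ via \eqref{un-assum}, then use $\sum_y p(x,y)=1$, \eqref{mpconst} and \eqref{Mconst}. Your explicit four-term expansion, with the two cross terms grouped so that \eqref{Mconst} is applied once, is just a rewriting of the paper's symmetrization $\sum_x \eta(x)\sum_y\bigl(p(x,y)+p(y,x)\bigr)(a_x+a_y)\le (1+m_p+M)\sum_x a_x\eta(x)$.
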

\begin{proof}   
We have 
\begin{eqnarray}\nonumber
|{\L}_{n} f(\eta) | \!&\!\leq \!&\! L_f \sum_{x,y\in\X_n} 
 p(x,y) \sum_{k=1}^{\eta(x)} g^k_{\eta(x),\eta(y)} \|\etb-\eta \|  \\ \nonumber
 &=& L_f \sum_{x,y\in\X_n} p(x,y) \sum_{k=1}^{\eta(x)} kg^k_{\eta(x),\eta(y)} (a_x + a_y) \\ \label{eq:bound1}
&\leq& L_f\,C\, \sum_{x, y\in\X_n} p(x,y) \left(\eta(x)+\eta(y)\right)(a_x + a_y) \\ \nonumber
&=& L_f\,C\,   \sum_{x\in\X_n} \eta(x)  \sum_{y\in\X_n}  \Bigl( p(x,y) + p(y,x)   \Bigr)  (a_x+a_y) \\
&\leq& L_f\,C\, (1+m_p+M) \sum_{x\in\X_n}  a_x  \eta(x) \nonumber
\end{eqnarray} 
where  $f\in\Lip$ implies the first inequality,
the second one  comes from (\ref{un-assum}), and the third one from  \eqref{mpconst}, \eqref{Mconst}.
\end{proof}
\begin{lemma}\label{bound2} 
Assume that the rates 
satisfy \eqref{assum}. We have, for all $\eta,\eta'\in\N^{\X_n},\,n\geq 1$, 
$$  \Big|\overline{\L}_{n} \Bigl( \|\eta-\eta'\|\Bigr) \Big|\leq \, 2C\, (1+m_p+M)\,\| \eta -\eta'\|. $$   
\end{lemma}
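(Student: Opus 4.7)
The plan is to mirror the proof of Lemma \ref{bound1}, replacing the role of $\gen^k_{\a,\b}$ by the coupling rates $G^{k,l}_{\a,\b,\g,\delta}$ and the role of \eqref{un-assum} by the key inequality \eqref{key} established in Lemma \ref{lem:key}.

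First I would observe that for any $x\neq y$ in $\X_n$ and any pair $(k,l)$ with $0\le k\le \eta(x)$, $0\le l\le \eta'(x)$, the configurations $S^k_{x,y}\eta$ and $S^l_{x,y}\eta'$ differ from $\eta$ and $\eta'$ only at the two sites $x,y$, each coordinate being shifted by $\pm k$ or $\pm l$. The triangle inequality then yields
\begin{equation*}
\bigl|\,\|S^k_{x,y}\eta - S^l_{x,y}\eta'\| - \|\eta-\eta'\|\,\bigr| \;\le\; |k-l|\,(a_x+a_y).
\end{equation*}
Plugging this into the definition \eqref{ncouplingL} of $\overline{\L}_n$ (and using that $p_n(x,y)=p(x,y)$ for $x\neq y$ in $\X_n$) gives
\begin{equation*}
\bigl|\overline{\L}_n(\|\eta-\eta'\|)\bigr| \;\le\; \sum_{x\neq y\in\X_n} p(x,y)\sum_{k,l\geq 0} G^{k,l}_{\eta(x),\eta(y),\eta'(x),\eta'(y)}\,|k-l|\,(a_x+a_y).
\end{equation*}

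Next I would apply \eqref{key} with $(\a,\b,\g,\delta)=(\eta(x),\eta(y),\eta'(x),\eta'(y))$, which bounds the inner double sum by $2C(|\eta(x)-\eta'(x)|+|\eta(y)-\eta'(y)|)$. Adding the non-negative diagonal contributions $x=y$ for convenience,
\begin{equation*}
\bigl|\overline{\L}_n(\|\eta-\eta'\|)\bigr| \;\le\; 2C \sum_{x,y\in\X_n} p(x,y)\bigl(|\eta(x)-\eta'(x)|+|\eta(y)-\eta'(y)|\bigr)(a_x+a_y).
\end{equation*}

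The final step is bookkeeping: I would split the right-hand side into the four resulting sums and estimate them with \eqref{mpconst} and \eqref{Mconst}. The two ``pure'' sums $\sum_{x,y}p(x,y)|\eta(x)-\eta'(x)|a_x$ and $\sum_{x,y}p(x,y)|\eta(y)-\eta'(y)|a_y$ contribute respectively $\|\eta-\eta'\|$ (since $\sum_y p(x,y)=1$) and $m_p\|\eta-\eta'\|$ (by \eqref{mpconst}). The delicate point is the two remaining ``mixed'' sums, involving $a_y|\eta(x)-\eta'(x)|$ and $a_x|\eta(y)-\eta'(y)|$: since \eqref{Mconst} bounds the sum $\sum_y p(x,y)a_y + \sum_y p(y,x)a_y$ jointly by $Ma_x$ (not each term individually), they must be grouped together before applying it, yielding a joint bound $M\|\eta-\eta'\|$. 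Combining the four contributions gives the announced constant $2C(1+m_p+M)$.

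The principal obstacle is the last grouping argument: invoking \eqref{Mconst} naively on each of the two mixed sums separately would not work, and only the joint estimate permits the clean factor $M$ (rather than $2M$) in the final bound.
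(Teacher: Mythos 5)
Your proposal is correct and follows essentially the same route as the paper's proof: bound the change of the norm under a coupled jump by $|k-l|(a_x+a_y)$, invoke the key inequality \eqref{key} from Lemma \ref{lem:key}, and then conclude with \eqref{mpconst} and \eqref{Mconst}, grouping the two mixed sums exactly as the paper does when it writes $\sum_y (p(x,y)+p(y,x))(a_y+a_x)$. Your final bookkeeping is in fact spelled out in more detail than the paper's one-line conclusion, and the constant $2C(1+m_p+M)$ matches.
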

\begin{proof}
We have
\begin{eqnarray*}
\Big|\overline{\L}_{n} \Bigl( \|\eta-\eta'\|\Bigr)\Big|  &= &
\Big| \sum_{x,y\in \X_n} p(x,y) \sum_{k=1}^{\eta(x)}\sum_{l=1}^{\eta'(x)} 
G^{k,l}_{\eta(x),\eta(y),\eta'(x),\eta'(y)}\Bigl( \|\etb-{\mathcal{S}}^l_{x,y}\eta'\| 
- \|\eta-\eta' \| \Bigr)\Big|   
\\
&\leq& \sum_{x, y\in\X_n}p(x,y)  \sum_{k=1}^{\eta(x)}
\sum_{l=1}^{\eta'(x)}  G^{k,l}_{\eta(x),\eta(y),\eta'(x),\eta'(y)} (a_x+a_y) |k-l| \\ 
&\leq& 2C\,\sum_{x, y\in\X_n} p(x,y) (a_x+a_y)  \left(|\eta(x)-\eta'(x)|+|\eta(y)-\eta'(y)|\right)  
\\
&\leq& 2C\sum_{x\in \X_n} |\eta(x)\!-\!\eta'(x)|  \sum_{y\in \X_n} (p(x,y) + p(y,x)) (a_y+a_x)  
\end{eqnarray*}
where the second inequality comes from Lemma \ref{lem:key}.
 This implies the result by \eqref{mpconst}, \eqref{Mconst}. 
\end{proof}
Based on  Lemmas \ref{bound1} and \ref{bound2}, the following technical lemmas 
(corresponding to \cite[Lemmas 2.1--2.4]{andjel}) can be proved for the MMP
$(\eta_t)_{t\geq 0}$ on $\X_n$ finite. We rely on \eqref{S_n-fini}--\eqref{q-rate-coupl}.
\begin{lemma}     \label{an2.1}
Fix $\eta\in\N^{\X_n}$. Assume that the rates 
 satisfy \eqref{un-assum}. We have for the Markov process $(\eta_t)_{t\geq 0}$ 
starting from $\eta_0=\eta$, and for $y\in\X_n$, 
$$
\mathbb{E}^{\eta} \Bigl( \eta_t(y)\Bigr) \leq e^{Cm_pt}\,\sum_{x\in\X_n} \eta(x) \sum_{l=0}^{\infty}\frac{(Ct)^l}{l!}p^{(l)}(x,y).
$$
\end{lemma}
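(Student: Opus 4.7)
My plan is to apply the generator $\mathcal{L}_n$ to the Lipschitz function $f_y(\zeta):=\zeta(y)$ (of Lipschitz constant $1/a_y$) and derive a closed differential inequality for $u(t,y):=\mathbb{E}^\eta(\eta_t(y))$. Since $(\mathcal{S}^k_{x,z}\zeta)(y)-\zeta(y)$ equals $-k$ when $y=x$, $+k$ when $y=z$, and $0$ otherwise, formula \eqref{ngenerator} reduces to
\begin{equation*}
\mathcal{L}_n f_y(\zeta)= \sum_{x\in\X_n,\,x\neq y} p(x,y)\sum_{k=1}^{\zeta(x)} k\,g^k_{\zeta(x),\zeta(y)}
\ -\ \sum_{z\in\X_n,\,z\neq y} p(y,z)\sum_{k=1}^{\zeta(y)} k\,g^k_{\zeta(y),\zeta(z)}.
\end{equation*}
Dropping the nonpositive loss term and applying \eqref{un-assum} together with $\sum_{x\neq y} p(x,y)\le m_p$ (from \eqref{mpconst}) yields the pointwise estimate
\begin{equation*}
\mathcal{L}_n f_y(\zeta)\ \le\ C\sum_{x\in\X_n} p(x,y)\,\zeta(x)\ +\ C\,m_p\,\zeta(y).
\end{equation*}

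Next I would use that in the finite volume $\X_n$ the dynamics conserves the total mass, hence $\eta_t(y)\le\sum_{x\in\X_n}\eta(x)$ almost surely and $u(t,y)$ is well defined and bounded. Dynkin's formula applied to $f_y$ (after truncating $f_y$ by $f_y\wedge N$ and letting $N\to\infty$, the passage to the limit being justified by dominated convergence thanks to mass conservation) produces
\begin{equation*}
u(t,y)\ \le\ \eta(y)\ +\ C\int_0^t\!\Bigl(\sum_{x\in\X_n} p(x,y)\,u(s,x)+m_p\,u(s,y)\Bigr)\,ds.
\end{equation*}
Setting $v(t,y):=e^{-Cm_p t}u(t,y)$ absorbs the diagonal term and leads to the differential inequality $\partial_t v(t,\cdot)\le C\,P^{*} v(t,\cdot)$, where $(P^{*}w)(y):=\sum_x p(x,y)w(x)$.

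Finally I would conclude by a standard comparison against the linear system $\partial_t \tilde v=CP^{*}\tilde v$ with initial data $\tilde v(0,\cdot)=\eta(\cdot)$, whose solution is the power series $e^{CtP^{*}}\eta$. Using the iteration identity $((P^{*})^l\eta)(y)=\sum_{x\in\X_n}\eta(x)\,p^{(l)}(x,y)$ one obtains
\begin{equation*}
v(t,y)\ \le\ \sum_{x\in\X_n}\eta(x)\sum_{l\ge 0}\frac{(Ct)^l}{l!}\,p^{(l)}(x,y),
\end{equation*}
and multiplication by $e^{Cm_p t}$ yields the announced inequality. The only delicate point is the justification of Dynkin's formula for the linearly growing test function $f_y$, which is handled by the truncation argument mentioned above; the rest are routine manipulations of a linear ODE inequality.
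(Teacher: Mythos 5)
Your proposal is correct, and its heart is exactly the paper's key estimate: applying $\L_n$ to the coordinate function $\zeta\mapsto\zeta(y)$, dropping the nonpositive loss term, and using \eqref{un-assum} together with \eqref{mpconst} to get $\L_n f_y(\zeta)\le C\sum_x p(x,y)\zeta(x)+Cm_p\zeta(y)$; this is precisely the bound the paper derives for $\sum_\zeta\zeta(y)q(\eta,\zeta)$. Where you diverge is in how this bound is propagated in time: the paper stays with the explicit representation $\mathbf{p}_t=\sum_j \frac{t^j}{j!}q^{(j)}$ of the finite-volume semigroup and proves by induction on $j$ the estimate \eqref{eq:maj-rec-q} on $\sum_\zeta\zeta(y)q^{(j)}(\eta,\zeta)$, then exchanges summations to extract the factor $e^{Cm_pt}$; you instead invoke Dynkin's formula to get an integral inequality for $u(t,y)=\mathbb{E}^\eta(\eta_t(y))$ and conclude by a Gronwall/comparison argument against the cooperative linear system $\partial_t\tilde v=CP^*\tilde v$ after removing the diagonal term. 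Both routes are legitimate and yield the same answer; the paper's is more self-contained (no appeal to a comparison principle, just manipulation of the series), while yours is arguably more transparent probabilistically. Two small remarks: since in finite volume the total mass is conserved, the state space reachable from $\eta$ is finite, so $f_y$ is bounded there and your truncation step is not really needed; and since the generator \eqref{ngenerator} only involves $p(x,y)$ for $x\neq y$ in $\X_n$, the iterates you produce are the $l$-step sums over paths staying in $\X_n$, which are dominated by the full $p^{(l)}(x,y)$ appearing in the statement — so your displayed identity for $((P^*)^l\eta)(y)$ should strictly be an inequality in the right direction, which is all you need.
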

\begin{proof} 
Fix $\eta\in\N^{\X_n}$, $y\in\X_n$. Since by (\ref{un-assum}),   \eqref{mpconst},
\begin{eqnarray*}
\sum_{\zeta\in\N^{\X_n}} \zeta(y) q(\eta,\zeta) &=& 
\sum_{x,z\in\X_n}\sum_{k=1}^{\eta(x)} p(x,z) \gen^k_{\eta(x),\eta(z)} ({\mathcal{S}}^k_{x,z}\eta(y) -\eta(y)) \\
&\leq&   \sum_{x\in\X_n}\sum_{k=1}^{\eta(x)} p(x,y) \gen^k_{\eta(x),\eta(y)} k\leq
C    \sum_{x\in\X_n}  \eta(x) p(x,y) \,+\,Cm_p\eta(y)
\end{eqnarray*}
and $q^{(j)}(\eta,\zeta)=\sum_{\xi\in\N^{\X_n}}q^{(j-1)}(\eta,\xi)q(\xi,\zeta)$ 
an induction proof gives that for $j\ge 1$,
\begin{equation}\label{eq:maj-rec-q}
\sum_{\zeta\in\N^{\X_n}} \zeta(y) q^{(j)}(\eta,\zeta) 
\leq C^j \sum_{l=0}^j \binom{j}{l}m_p^{j-l} \sum_{z\in\X_n} \eta(z)p^{(l)}(z,y).  
\end{equation}
We conclude  by \eqref{pt-rate}  that 
$$ \mathbb{E}^{\eta} \Bigl( \eta_t(y)\Bigr)
=\sum_{\zeta\in\N^{\X_n}}\zeta(y)\mathbf{p}_t(\eta,\zeta)\leq  
\sum_{j=0}^{\infty} \frac{t^j}{j!} C^j \sum_{l=0}^j \binom{j}{l}m_p^{j-l} 
\sum_{x\in \X_n}\eta(x)p^{(l)}(x,y)        $$
and exchanging summations gives the result. 
\end{proof}
\medskip
\begin{lemma}     \label{an2.2} 
Let $f\in\Lip$. Assume that the rates 
 satisfy \eqref{assum}. Then
\begin{eqnarray*}
&& (a) \qquad S_n(t)f\in\Lip \ \text{ and } L_{S_n(t)f}\leq L_fe^{2C(M+m_p+1)t},  \qquad \qquad
\\
&& (b) \qquad \text{if } |f(\eta)|\leq c_0\|\eta\|\ \text{ then } |S_n(t)f(\eta)|\leq c_0 e^{C(M+m_p)t}\|\eta\|.  \qquad \qquad
\end{eqnarray*}
\end{lemma}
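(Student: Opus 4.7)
\textbf{Plan for Lemma \ref{an2.2}.}

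For part (a), my plan is to use the coupled process $(\eta_t,\zeta_t)_{t\ge 0}$ on $\N^{\X_n}\times\N^{\X_n}$ with generator $\overline{\L}_n$ from \eqref{ncouplingL}, whose construction from \eqref{coupling} guarantees that each marginal is a copy of the MMP on $\X_n$. Starting the coupled process from $(\eta_0,\zeta_0)=(\eta,\zeta)$, I write
\begin{equation*}
S_n(t)f(\eta)-S_n(t)f(\zeta) \;=\; \mathbb{E}^{(\eta,\zeta)}\bigl(f(\eta_t)-f(\zeta_t)\bigr),
\end{equation*}
so the Lipschitz property of $f$ gives $|S_n(t)f(\eta)-S_n(t)f(\zeta)|\le L_f\,\phi(t)$ where $\phi(t)=\mathbb{E}^{(\eta,\zeta)}\|\eta_t-\zeta_t\|$. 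Since particles are conserved in each marginal and $\X_n$ is finite, the coupled process restricted to configurations of fixed total mass in each coordinate is a finite-state Markov chain, so Dynkin's formula applies to the linearly growing observable $(\eta,\zeta)\mapsto\|\eta-\zeta\|$:
\begin{equation*}
\phi(t)\;=\;\|\eta-\zeta\|+\int_0^t \mathbb{E}^{(\eta,\zeta)}\bigl(\overline{\L}_n\|\eta_s-\zeta_s\|\bigr)\,ds.
\end{equation*}
Lemma \ref{bound2} then bounds the integrand by $2C(1+m_p+M)\phi(s)$, and Gronwall's inequality yields $\phi(t)\le e^{2C(M+m_p+1)t}\|\eta-\zeta\|$, which gives (a).

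For part (b), I would just apply the probabilistic representation $S_n(t)f(\eta)=\mathbb{E}^\eta f(\eta_t)$ from \eqref{S_n-fini}, together with $|f(\eta)|\le c_0\|\eta\|=c_0\sum_y a_y\eta(y)$, to obtain
\begin{equation*}
|S_n(t)f(\eta)|\;\le\;c_0\sum_{y\in\X_n} a_y\,\mathbb{E}^\eta\bigl(\eta_t(y)\bigr).
\end{equation*}
Lemma \ref{an2.1} controls each $\mathbb{E}^\eta\eta_t(y)$ by $e^{Cm_pt}\sum_x\eta(x)\sum_{l\ge 0}(Ct)^l p^{(l)}(x,y)/l!$. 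After exchanging the order of summation, the key estimate is the iterated Liggett--Spitzer bound $\sum_y a_y p^{(l)}(x,y)\le M^l a_x$, which follows by induction from \eqref{Mconst} (the base case being $\sum_y a_y p(x,y)\le M a_x$, and the inductive step a one-line computation conditioning on the first step). This reduces the sum over $l$ to $\sum_l (CMt)^l/l!=e^{CMt}$, and gives $|S_n(t)f(\eta)|\le c_0 e^{C(M+m_p)t}\|\eta\|$.

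The two steps are essentially routine given Lemmas \ref{bound1}--\ref{an2.1}; the only subtlety I anticipate is the justification of Dynkin's formula in (a) for an unbounded observable, but this is immediate once one restricts to the (finite) level sets of total mass in each marginal, on which the coupled process lives. The iterated application of \eqref{Mconst} in (b) is the place where the Liggett--Spitzer construction really earns its keep, and it is what produces the clean exponential growth rate $e^{C(M+m_p)t}$.
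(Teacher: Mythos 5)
Your proposal is correct, but in both parts it follows a genuinely different route from the paper's. The paper works directly with the exponential series representations \eqref{pt-rate}, \eqref{pt-rate-coupl}: it first proves, by induction on $j$ using Lemmas \ref{bound1} and \ref{bound2}, the power bounds \eqref{prel-7.5-a}--\eqref{prel-7.5-b} on $\sum_{\zeta}q^{(j)}(\eta,\zeta)\|\zeta\|$ and $\sum_{\zeta,\zeta'}\overline{q}^{(j)}(\eta,\eta';\zeta,\zeta')\|\zeta-\zeta'\|$, and then sums the series to get both (a) and (b). You instead stay at the level of the processes: for (a) you write $S_n(t)f(\eta)-S_n(t)f(\zeta)=\mathbb{E}^{(\eta,\zeta)}(f(\eta_t)-f(\zeta_t))$ (using, as the paper also does implicitly, that the coupling \eqref{coupling} has the correct marginals), then control $\phi(t)=\mathbb{E}^{(\eta,\zeta)}\|\eta_t-\zeta_t\|$ by Dynkin's formula and Gronwall, with the same key input Lemma \ref{bound2}; your remark that Dynkin needs no care because the coupled chain lives on the finite set $\overline{\mathrm{S}}$, on which $\|\eta-\zeta\|$ is bounded, is exactly the right justification, so this is essentially the generator-level (integral equation) version of the paper's term-by-term series estimate. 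For (b) the divergence is more substantial: the paper does not invoke Lemma \ref{an2.1} at all, but reuses \eqref{prel-7.5-a}, and its proof as written produces the constant $c_0e^{C(M+m_p+1)t}\|\eta\|$, slightly weaker than the bound stated in the lemma; your route through Lemma \ref{an2.1} together with the iterated Liggett--Spitzer estimate $\sum_{y}a_y p^{(l)}(x,y)\le M^l a_x$ (which does follow by induction from \eqref{Mconst}) yields exactly the stated $c_0e^{C(M+m_p)t}\|\eta\|$. In short, your argument buys the sharper (stated) constant in (b) and a more probabilistic proof of (a), while the paper's buys uniformity: the single series estimate \eqref{prel-7.5-a}--\eqref{prel-7.5-b} serves (a), (b) and the later convergence arguments with the same machinery.
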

\begin{proof}   
It is based on the following consequences of Lemmas \ref{bound1} and \ref{bound2}:  
For every $\eta,\eta' \in \N^{\X_n}$, $j\in\N$,
\begin{eqnarray}\label{prel-7.5-a}
\left|\sum\limits_{\zeta\in\mathrm{S}} q^{(j)} (\eta,\zeta) \|\zeta\| \right|
&\leq&\, \left(C\, (M+m_p+1)\right)^j \,\| \eta \|, \\ \label{prel-7.5-b}
\left|\sum\limits_{\zeta,\zeta'\in\overline{\mathrm{S}}} 
\overline{q}^{(j)}(\eta,\eta',\zeta,\zeta')  \|\zeta-\zeta'\| \right|
&\leq& \, (2C\, (M+m_p+1))^j\,
\| \eta -\eta'\|. 
\end{eqnarray}
 Let us derive for instance \eqref{prel-7.5-a}
  (it is similar for \eqref{prel-7.5-b}), by induction on $j$. For $j=1$, 
  using \eqref{ngenerator}, \eqref{q-rate} and Lemma \ref{bound1} 
  for the function $f(\eta)=\| \eta \|$, which belongs to $\Lip$ with $L_f=1$, we have
  $$\left|\sum\limits_{\zeta\in\mathrm{S}} q (\eta,\zeta) \|\zeta\| \right|
  = \left|\L_n f(\eta) \right| \leq\,   C\, (1+m_p+M)\,\| \eta \|.$$
  Then for $j\ge 1$, using the computation for $j=1$ we have
  $$\left|\sum\limits_{\zeta\in\mathrm{S}} q^{(j+1)} (\eta,\zeta) \|\zeta\| \right|
  =\left|\sum\limits_{\xi\in\mathrm{S}} q^{(j)} (\eta,\xi) 
  \sum\limits_{\zeta\in\mathrm{S}} q (\xi,\zeta)\|\zeta\|\right|
  \leq\,   C\, (1+m_p+M)\sum\limits_{\xi\in\mathrm{S}} q^{(j)} (\eta,\xi) \| \xi \|.
   $$
(a) For $f\in\Lip$,
we define $g(\eta,\eta')= f(\eta)-f(\eta')$ and $h(\eta,\eta')=\|\eta-\eta'\|$ for all $\eta,\eta'$. 
Then by \eqref{prel-7.5-b},
\begin{eqnarray*} \nonumber
&& |S_n(t)f(\eta) - S_n(t)f(\eta')|=   |\overline{S}_n(t)g(\eta,\eta')|     
  \leq L_f|\overline{S}_n(t)h(\eta,\eta')|
  \\
 && 
 = L_f \left|\sum_{j=0}^{\infty} \frac{t^j}{j!} \sum_{\zeta,\zeta'\in\overline{\mathrm{S}}} \overline{q}^{(j)}(\eta,\eta',\zeta,\zeta') h(\zeta,\zeta')\right|
 \leq L_f e^{2C\, (M+m_p+1)t} \| \eta -\eta'\| . \label{Sn-for-invt}
\end{eqnarray*}
(b) For $f\in\Lip$, by \eqref{pt-rate}, \eqref{prel-7.5-a},
 $$ |S_n(t)f(\eta)|\leq  c_0\sum\limits_{\zeta\in\mathrm{S}} \mathbf{p}_t(\eta,\zeta) \|\zeta\|
= c_0\left|\sum\limits_{j=0}^{\infty} \frac{t^j}{j!} \sum\limits_{\zeta\in\mathrm{S}} 
q^{(j)}(\eta,\zeta) \|\zeta\|\right| \leq c_0 e^{C\, (M+m_p+1)t}\|\eta\|.$$                     
\end{proof}

\smallskip

\begin{lemma}     \label{an2.4}
Let $m<n$ and $f\in\Lip$. Assume that the rates 
 satisfy \eqref{un-assum}. Then
\begin{equation}\label{eq:an2.4}
| (\L_n-\L_m) f(\eta)  |\leq 
C L_f  \sum_{\substack{x,y\in\X \\ x\neq y}}|p_n(x,y)-p_m(x,y)| (\eta(x)+\eta(y)) (a_x+a_y) .
\end{equation}
\end{lemma}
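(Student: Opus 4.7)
The plan is to expand $(\L_n-\L_m)f(\eta)$ directly from definition \eqref{ngenerator}, use the Lipschitz character of $f$ to control the finite differences $f(\etb)-f(\eta)$, and finally invoke assumption \eqref{un-assum} to sum over $k$.

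First, I would observe that for $x=y$ the transition $\etb$ coincides with $\eta$, so those terms vanish and we can write
\begin{equation*}
(\L_n-\L_m) f(\eta) = \sum_{\substack{x,y\in\X\\ x\neq y}} \bigl(p_n(x,y)-p_m(x,y)\bigr) \sum_{k>0} \gen^k_{\eta(x),\eta(y)} \bigl(f(\etb)-f(\eta)\bigr).
\end{equation*}
Then, since $f\in\Lip$ and $\etb$ differs from $\eta$ only at $x$ and $y$ by $k$ particles, we have $\|\etb-\eta\|=k(a_x+a_y)$, so
\begin{equation*}
|f(\etb)-f(\eta)|\leq L_f\, k\,(a_x+a_y).
\end{equation*}

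Next I would take absolute values inside and apply this estimate, yielding
\begin{equation*}
|(\L_n-\L_m) f(\eta)| \leq L_f \sum_{\substack{x,y\in\X\\ x\neq y}} |p_n(x,y)-p_m(x,y)|\,(a_x+a_y)\sum_{k>0} k\,\gen^k_{\eta(x),\eta(y)}.
\end{equation*}
Finally, assumption \eqref{un-assum} controls the inner sum by $C(\eta(x)+\eta(y))$, giving exactly the stated bound.

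There is essentially no obstacle here: this is a bookkeeping lemma that quantifies how close the generators $\L_n$ and $\L_m$ are, in terms of how close the truncated transition probabilities $p_n$ and $p_m$ are. The only mild point to notice is the trivial vanishing of the diagonal terms (which makes $p_n$ and $p_m$ differ on the diagonal irrelevant), and that \eqref{un-assum} is exactly the right linear-growth bound on $\sum_k k\,\gen^k_{\a,\b}$ to absorb the Lipschitz constant cleanly. No use of \eqref{assum} or coupling estimates is needed for this step.
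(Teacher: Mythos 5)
Your proposal is correct and follows essentially the same route as the paper: expand $(\L_n-\L_m)f$ over $x\neq y$ (the diagonal contributions vanish, as already built into definition \eqref{ngenerator}), bound $|f(\etb)-f(\eta)|\le L_f\,k\,(a_x+a_y)$ by the Lipschitz property, and absorb $\sum_k k\,\gen^k_{\eta(x),\eta(y)}\le C(\eta(x)+\eta(y))$ via \eqref{un-assum}. No gap; this matches the paper's proof of Lemma \ref{an2.4}.
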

\begin{proof} We have, using the proof of Lemma \ref{bound1},
\begin{eqnarray*} 
&&| (\L_n-\L_m) f(\eta)  |\leq \sum_{\substack{x,y\in\X \\ 
x\neq y}}\sum_{k=1}^{\eta(x)} \gen^k_{\eta(x),\eta(y)}  |p_n(x,y)-p_m(x,y)|
L_f \|\etb-\eta\|\\
&& \leq L_f\sum_{\substack{x,y\in\X \\ x\neq y}}|p_n(x,y)-p_m(x,y)|
\sum_{k=1}^{\eta(x)} k\gen^k_{\eta(x),\eta(y)}(a_x+a_y)
\end{eqnarray*}
 and we conclude by \eqref{un-assum} and the proof of Lemma \ref{bound1}. 
\end{proof}
\smallskip
\begin{corollary}\label{coro:St}
Let $f\in\Lip$,  $\eta\in\XX$ and $t\geq 0$. Assume that the rates 
 satisfy \eqref{assum}. Then 
\begin{equation}\label{aux2}
    |S_n(t)f(\eta) - S_m(t)f(\eta)|     \leq CL_f e^{2Cm_pt} \int_0^t H_{n,m}(\eta)\ ds
\end{equation}
where 
\begin{eqnarray}\nonumber 
H_{n,m}(\eta) &=&  e^{2C(M+1)(t-s)}  
\sum_{\ z\in\X_n} \eta(z)\sum_{l=0}^{\infty} \frac{(Cs)^l}{l!}\\ 
&&\qquad\qquad\times 
\sum_{\substack{x,y\in\X \\ x\neq y}}\bigl(p_n^{(l)}(z,x) + p_n^{(l)}(z,y) \bigr)
(a_x+a_y)     |p_n(x,y)-p_m(x,y)|.\label{def_H}
\end{eqnarray} 
\end{corollary}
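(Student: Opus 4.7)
The plan is to compare the two finite-volume semigroups via a Duhamel-type identity, then to bound the integrand pointwise using Lemmas \ref{an2.1}, \ref{an2.2}(a), and \ref{an2.4}. Since $\X_n$ and $\X_m$ are finite and $S_n(t), S_m(t)$ are genuine matrix exponentials (cf.~\eqref{S_n-fini}--\eqref{q-rate}), the commutation $\L_n S_n(u)=S_n(u)\L_n$ holds. Setting $u(s)=S_n(s)S_m(t-s)f$, one has $u(0)=S_m(t)f$, $u(t)=S_n(t)f$ and
\begin{equation*}
u'(s) = S_n(s)(\L_n-\L_m)S_m(t-s)f,
\end{equation*}
so that
\begin{equation*}
S_n(t)f - S_m(t)f = \int_0^t S_n(s)(\L_n-\L_m)S_m(t-s)f\,ds.
\end{equation*}

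Next, I would bound the integrand pointwise. By Lemma \ref{an2.2}(a), $S_m(t-s)f\in\Lip$ with Lipschitz constant $L_f e^{2C(M+m_p+1)(t-s)}$, hence Lemma \ref{an2.4} applied to $g=S_m(t-s)f$ gives
\begin{equation*}
\bigl|(\L_n-\L_m)S_m(t-s)f(\xi)\bigr| \le CL_f e^{2C(M+m_p+1)(t-s)}\!\!\sum_{x\neq y}|p_n(x,y)-p_m(x,y)|(\xi(x)+\xi(y))(a_x+a_y).
\end{equation*}
Since $S_n(s)h(\eta)=\mathbb{E}^\eta h(\eta_s)$ is a genuine expectation, $|S_n(s)h|\le S_n(s)|h|$, and because the right-hand side above is linear in $\xi$, Lemma \ref{an2.1} applied in $\X_n$ provides
\begin{equation*}
\mathbb{E}^\eta[\eta_s(x)] \le e^{Cm_p s}\sum_{z\in\X_n}\eta(z)\sum_{l=0}^\infty \frac{(Cs)^l}{l!}p_n^{(l)}(z,x),
\end{equation*}
which is exactly the factor producing the $p_n^{(l)}$ appearing in the definition of $H_{n,m}(\eta)$.

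It only remains to collect the exponential factors. The Lipschitz estimate contributes $e^{2C(M+m_p+1)(t-s)}$ and the expectation bound contributes $e^{Cm_p s}$, whose product equals $e^{2C(M+1)(t-s)}\cdot e^{Cm_p(2t-s)}$. Since $s\ge 0$, one has $e^{Cm_p(2t-s)}\le e^{2Cm_pt}$, allowing the factor $e^{2Cm_pt}$ to be pulled out of the integral while leaving exactly $e^{2C(M+1)(t-s)}$ inside, matching the definition of $H_{n,m}(\eta)$ in \eqref{def_H}. Integrating over $s\in[0,t]$ yields the announced inequality.

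The main technical care needed is bookkeeping: one must keep track of which finite semigroup acts on which side, because the Duhamel identity in the order $S_n(s)S_m(t-s)f$ is what forces the expectation (and hence the transition probabilities $p_n^{(l)}$) to be taken with respect to the dynamics restricted to $\X_n$ --- the converse ordering would produce $p_m^{(l)}$ instead. Everything else reduces to elementary manipulations of exponentials. The argument extends straightforwardly to give a corresponding estimate for the coupled semigroups $\overline{S}_n(t)$ and $\overline{S}_m(t)$ via Lemma~\ref{bound2} in place of Lemma~\ref{bound1}, which will be used in Subsection~\ref{ex4} to pass to the limit.
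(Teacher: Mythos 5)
Your proposal is correct and follows essentially the same route as the paper, which proves the corollary exactly by the Duhamel identity $S_n(t)f-S_m(t)f=\int_0^t S_n(s)(\L_n-\L_m)S_m(t-s)f\,ds$ combined with Lemmas \ref{an2.4}, \ref{an2.2}(a) and \ref{an2.1}; your bookkeeping of the exponential factors (absorbing $e^{Cm_p(2t-s)}\le e^{2Cm_pt}$ and keeping $e^{2C(M+1)(t-s)}$ inside the integral) and your use of Lemma \ref{an2.1} for the $\X_n$-dynamics to produce the $p_n^{(l)}$ terms match the intended argument.
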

\begin{proof}
The result follows from the successive use of the  integration by parts formula 
$$  
 S_n(t)f(\eta) - S_m(t)f(\eta) = \int_0^t S_n(s) (\L_n-\L_m) S_m(t-s) f(\eta) ds
$$
and  Lemmas  \ref{an2.4}, \ref{an2.2}(a) and \ref{an2.1}.
\end{proof}
\smallskip

\subsection{Infinite volume}    \label{ex4} 
Through this paragraph  $\X$ is infinite. 
We consider its approximation introduced in \eqref{seqX_n}. 
Recall notation $p_n$, $\L_n$ and $S_n(t)$.

We obtained in Subsection \ref{ex1} some properties of finite volume processes with generator $\L_n$ 
and semigroup $S_n(t),\, t\geq 0$ that we shall now use to take the limit  $n$ to infinity. 
\begin{lemma} \label{L-n-conv}
Assume that the rates 
 satisfy \eqref{un-assum}. For all $f\in\Lip$, $\eta\in\XX$,
 $\L_n f(\eta) \longrightarrow \L f(\eta) $ when $n\rightarrow\infty$.
 \end{lemma}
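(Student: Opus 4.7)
My plan is to write $\L f(\eta) - \L_n f(\eta)$ as a tail sum over pairs of sites at least one of which lies outside $\X_n$, dominate it by an absolutely convergent sum, and then apply dominated convergence.

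First I would use the alternate form of $\L_n$ given in \eqref{ngenerator}, namely $\L_n f(\eta) = \sum_{x\neq y\in\X_n}\sum_{k>0}p(x,y)g^k_{\eta(x),\eta(y)}(f(\etb)-f(\eta))$, together with the fact that the $x=y$ terms in \eqref{generator} vanish because $\etb=\eta$ when $x=y$. This yields
\begin{equation*}
\L f(\eta)-\L_n f(\eta)=\sum_{(x,y)\in E_n}\sum_{k>0}p(x,y)\,g^k_{\eta(x),\eta(y)}\bigl(f(\etb)-f(\eta)\bigr),
\end{equation*}
where $E_n=\{(x,y)\in\X\times\X:x\neq y,\ \{x,y\}\not\subset\X_n\}$. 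Next I would use $f\in\Lip$ together with $\|\etb-\eta\|=k(a_x+a_y)$ (valid whenever $g^k_{\eta(x),\eta(y)}\neq 0$, since then $k\le\eta(x)$) and assumption \eqref{un-assum} to bound
\begin{equation*}
|\L f(\eta)-\L_n f(\eta)|\ \le\ L_f\,C\sum_{(x,y)\in E_n}p(x,y)\,(a_x+a_y)(\eta(x)+\eta(y)).
\end{equation*}

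The key verification is that the unrestricted sum $T(\eta):=\sum_{x\neq y}p(x,y)(a_x+a_y)(\eta(x)+\eta(y))$ is finite. Expanding the four cross-terms and applying respectively $\sum_y p(x,y)=1$, the first half of \eqref{Mconst} to the terms $\sum_y p(x,y)a_y$, the second half of \eqref{Mconst} (used with the roles of $x,y$ swapped) to the terms $\sum_x p(x,y)a_x$, and \eqref{mpconst} to the terms $\sum_x p(x,y)$, I would obtain a bound of the form $T(\eta)\le(1+2M+m_p)\|\eta\|<\infty$, the finiteness coming from $\eta\in\XX$.

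With this uniform majorant, dominated convergence on the counting measure on $\X\times\X$ (weighted by $p(x,y)(a_x+a_y)(\eta(x)+\eta(y))$) finishes the proof: for every fixed pair $(x,y)$ with $x\neq y$, the indicator $\1_{E_n}(x,y)$ vanishes for all $n$ large enough since $\bigcup_n\X_n=\X$, so $\sum_{(x,y)\in E_n}p(x,y)(a_x+a_y)(\eta(x)+\eta(y))\to 0$, and hence $\L_n f(\eta)\to\L f(\eta)$. I do not expect a real obstacle here; the only subtlety is to make sure one has enough integrability to justify interchanging limit and sum, and this is precisely what \eqref{un-assum} combined with \eqref{Mconst}, \eqref{mpconst} and $\eta\in\XX$ delivers.
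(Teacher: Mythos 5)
Your proof is correct, and it takes a slightly different (more direct) route than the paper. The paper proves the lemma by first showing that $(\L_n f(\eta))_n$ is a Cauchy sequence, using Lemma \ref{an2.4} to bound $|(\L_n-\L_m)f(\eta)|$ in terms of $|p_n(x,y)-p_m(x,y)|$, dominating that bound by $CL_f(1+m_p+M)\|\eta\|$ as in the proof of Lemma \ref{bound1}, and then identifying the limit through the pointwise convergence $p_n(x,y)\to p(x,y)$ and dominated convergence. You instead compare $\L_n f(\eta)$ directly with $\L f(\eta)$, writing the difference as the tail sum over pairs $(x,y)$ with $\{x,y\}\not\subset\X_n$, bounding each term via the Lipschitz property, $\|\etb-\eta\|=k(a_x+a_y)$ and \eqref{un-assum}, and killing the tail by dominated convergence against the summable majorant $p(x,y)(a_x+a_y)(\eta(x)+\eta(y))$, whose total is controlled by \eqref{Mconst}, \eqref{mpconst} and $\eta\in\XX$. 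The ingredients are the same, but your version bypasses the Cauchy/$\L_n-\L_m$ step and has the small added benefit of exhibiting explicitly the absolute convergence of the series defining $\L f(\eta)$ (which in the paper is packaged into Lemma \ref{bound1} and Proposition \ref{prop:generator}); the only cosmetic difference is your constant $1+2M+m_p$ in place of the paper's $1+m_p+M$, obtained by splitting \eqref{Mconst} into its two halves, which is of course immaterial.
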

\begin{proof}  We use Lemma \ref{an2.4} and the proof of Lemma \ref{bound1} to write
\begin{eqnarray*} 
 | (\L_n-\L_m) f(\eta)  |
 \leq C L_f  \sum_{x,y\in \X} p(x,y)  (\eta(x)+\eta(y)) (a_x+a_y) \leq C L_f (1+m_p+M) \|\eta\|
\end{eqnarray*}
and each term on the right-hand side of \eqref{eq:an2.4} goes to 0 when $n,m\rightarrow\infty$,
 we have a Cauchy sequence that converges using the dominated convergence theorem, and 
the fact that $p_n(x,y)\rightarrow p(x,y)$ when $n\rightarrow\infty$  yields the limit.
\end{proof}
\smallskip
\begin{proof} {\it of Proposition \ref{prop:generator}.}\quad 
It is a consequence of 
 Lemmas \ref{L-n-conv} and \ref{bound1}, which also imply  \eqref{bound-to-generator}.
\end{proof}
 \smallskip 
 \begin{lemma}     \label{an2.6}
  Assume that the rates 
 satisfy \eqref{assum}. 
 For all $f\in\Lip$, $\eta\in\XX$ and $t\geq 0$, $S_n(t)f(\eta)$ 
converges when $n\rightarrow\infty$.
\end{lemma}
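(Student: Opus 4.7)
The plan is to show that $(S_n(t)f(\eta))_n$ is a Cauchy sequence in $\mathbb{R}$, hence convergent. The key tool is Corollary \ref{coro:St}, which bounds $|S_n(t)f(\eta) - S_m(t)f(\eta)|$ by $CL_f e^{2Cm_pt}\int_0^t H_{n,m}(\eta)\,ds$, so it is enough to prove that this last integral tends to $0$ as $n,m\to\infty$. I will apply dominated convergence to the $ds$ integral, which requires (i) pointwise convergence $H_{n,m}(\eta)\to 0$ for almost every $s\in[0,t]$, and (ii) a uniform (in $n,m$) integrable majorant on $[0,t]$.

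For the uniform bound, I would start from the expression \eqref{def_H} and use the trivial estimate $|p_n(x,y)-p_m(x,y)|\le p(x,y)$ valid for $x\neq y$ (which follows from the fact that $p_n(x,y)\in\{0,p(x,y)\}$ for $x\neq y$, by \eqref{p_n}). Splitting the sum $\sum_{x\neq y}(p_n^{(l)}(z,x)+p_n^{(l)}(z,y))(a_x+a_y)p(x,y)$ into two pieces and applying \eqref{mpconst}--\eqref{Mconst} gives
\begin{equation*}
\sum_{x\neq y}p_n^{(l)}(z,x)(a_x+a_y)p(x,y)\le (1+M)\sum_x p_n^{(l)}(z,x)a_x,\quad
\sum_{x\neq y}p_n^{(l)}(z,y)(a_x+a_y)p(x,y)\le (M+m_p)\sum_y p_n^{(l)}(z,y)a_y.
\end{equation*}
Since $p_n$ itself satisfies \eqref{Mconst} with $M$ replaced by $M+1$, iteration yields $\sum_y p_n^{(l)}(z,y)a_y\le (M+1)^l a_z$. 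Plugging back into \eqref{def_H} and summing the series in $l$, I obtain
\begin{equation*}
H_{n,m}(\eta)\le (1+2M+m_p)\,e^{2C(M+1)(t-s)}\,e^{C(M+1)s}\,\|\eta\|,
\end{equation*}
which is uniformly bounded on $[0,t]$ and independent of $n,m$, giving the desired majorant since $\|\eta\|<\infty$ for $\eta\in\XX$.

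For pointwise convergence of $H_{n,m}(\eta)$ to $0$, I apply dominated convergence once more inside \eqref{def_H} to the summation over $z,l,x,y$. The same bounds as above provide an $n,m$-independent summable majorant for each fixed $s$, while for any fixed $x\neq y$ one has $p_n(x,y)=p_m(x,y)=p(x,y)$ once $n,m$ are large enough to contain $\{x,y\}$, so $|p_n(x,y)-p_m(x,y)|\to 0$. Combined with $p_n^{(l)}(z,x)\to p^{(l)}(z,x)$, each term of the sum vanishes in the limit, and dominated convergence gives $H_{n,m}(\eta)\to 0$ pointwise in $s$.

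The main obstacle is the double use of dominated convergence; in particular, securing the integrable majorant inside the triple series requires the careful bookkeeping above, exploiting the asymmetric roles that \eqref{mpconst} and \eqref{Mconst} play for the two summands $p_n^{(l)}(z,x)$ and $p_n^{(l)}(z,y)$. Once this is done, dominated convergence gives $\int_0^t H_{n,m}(\eta)\,ds\to 0$, hence $|S_n(t)f(\eta)-S_m(t)f(\eta)|\to 0$ as $n,m\to\infty$, which establishes the claim.
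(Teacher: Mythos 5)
Your proposal is correct and takes essentially the same route as the paper's proof: starting from Corollary \ref{coro:St}, you bound $H_{n,m}(\eta)$ uniformly in $n,m$ and $s\in[0,t]$ by a constant times $\|\eta\|$ using \eqref{mpconst}, \eqref{Mconst} and the iterated estimate $\sum_x a_x p_n^{(l)}(z,x)\le (M+1)^l a_z$, then conclude by dominated convergence together with the pointwise vanishing of $|p_n(x,y)-p_m(x,y)|$ for fixed $x\neq y$ (the paper works instead with the exact indicator decomposition \eqref{eq:pn-pm} of $|p_n-p_m|$ where you use the cruder bound $|p_n(x,y)-p_m(x,y)|\le p(x,y)$, an inessential difference). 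The one point to tighten is your inner dominated-convergence step in \eqref{def_H}: the majorant you describe still contains $p_n^{(l)}$ and so is not $n$-independent as stated; this is repaired in one line by the paper's own observation $p_n(x,y)\le p(x,y)+\1_{[x=y]}$, which gives $p_n^{(l)}(z,x)\le q^{(l)}(z,x)$ for the $n$-independent kernel $q(x,y)=p(x,y)+\1_{[x=y]}$, still satisfying $\sum_x a_x q^{(l)}(z,x)\le (M+1)^l a_z$, so the summable majorant and hence your argument go through.
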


\begin{proof}     Let us fix $t>0$ and $\eta\in\XX$.                  
 From Corollary \ref{coro:St} we have 
 that $H_{n,m}(\eta)$ given by \eqref{def_H} satisfies the upper bound
\begin{eqnarray} 
H_{n,m}(\eta)
 &\leq& 2 (M+2) e^{2C(M+1)t} \|\eta\|.
 \label{bound-Hnm}
\end{eqnarray} 
Indeed, on the one hand, by induction on $l$ using \eqref{Mconst}
we get, for $z\in\X_n$,
\begin{equation}\label{Mconst-l}
\sum_{x\in \X_n} a_x p_n^{(l)}(z,x)\leq a_z(M+1)^l.
\end{equation}
On the other hand, for $x,y\in X_n$ we have
\begin{equation}\label{eq:pn-pm}
|p_n(x,y)-p_m(x,y)|=p(x,y)(\1_{[x,y\in X_n\setminus X_m]}
+\1_{[x\in X_n\setminus X_m,\,y\in X_m]}
+\1_{[y\in X_n\setminus X_m,\,x\in X_m]}).
\end{equation}
 Hence using  \eqref{eq:pn-pm}, \eqref{Mconst} and \eqref{Mconst-l}, 
  we bound the second line of \eqref{def_H} by 
 \begin{eqnarray*} 
&&\sum_{x\in X_n\setminus X_m} a_x p_n^{(l)}(z,x) \sum_{\substack{y\in X_n\\ y\neq x}} p(x,y)
+ \sum_{x\in  X_m} a_x p_n^{(l)}(z,x) \sum_{y\in X_n\setminus X_m} p(x,y)\\
&& +\sum_{x\in X_n\setminus X_m} p_n^{(l)}(z,x) \sum_{\substack{y\in X_n\\ y\neq x}} a_y p(x,y)
+ \sum_{x\in X_m}  p_n^{(l)}(z,x) \sum_{y\in X_n\setminus X_m} a_y p(x,y)\\
&& +\sum_{y\in X_n\setminus X_m} p_n^{(l)}(z,y) \sum_{\substack{x\in X_n\\ x\neq y}} a_x p(x,y)
+\sum_{y\in X_m} p_n^{(l)}(z,y) \sum_{x\in X_n\setminus X_m} a_x  p(x,y)\\
&& + \sum_{y\in X_n\setminus X_m} a_y p_n^{(l)}(z,y) \sum_{\substack{x\in X_n\\ x\neq y}} p(x,y)
+\sum_{y\in X_m} a_y  p_n^{(l)}(z,y) \sum_{x\in X_n\setminus X_m} p(x,y)\\
\leq &&  \sum_{x\in X_n} a_x p_n^{(l)}(z,x) + \sum_{x\in X_n} p_n^{(l)}(z,x) (M+1) a_x \\
&& + \sum_{y\in X_n} p_n^{(l)}(z,y) (M+1) a_y +  \sum_{y\in X_n} a_y p_n^{(l)}(z,y)\\
\leq && 2 (M+2) \sum_{x\in X_n} a_x p_n^{(l)}(z,x)\leq 2 (M+2)a_z(M+1)^l.
\end{eqnarray*}
Combining \eqref{def_H} and \eqref{aux2},
by the dominated convergence theorem we obtain that $(S_n(t)f(\eta))_{n\ge 0}$ 
is a Cauchy sequence. 
\end{proof}
\smallskip

\begin{definition}\label{sem_def}
For all $f\in\Lip$, $\eta\in\XX$ and $t\geq 0$ we define 
\begin{equation}\label{Sdef}
    S(t)f(\eta) = \lim_{n\rightarrow\infty} S_n(t)f (\eta).
\end{equation}
\end{definition}
The following lemma shows that  $\left( S(t) \, :\, t\geq 0\right)$ thus defined
is a semigroup with infinitesimal generator $\L$ given by (\ref{generator}).  
\begin{lemma} \label{lemma-spitzer}  
 Let $f\in\Lip$, $t,t_1,t_2\geq 0$, $\eta\in\XX$    \\[.2cm]
(i) $S(t_1+t_2)=S(t_1)S(t_2),\, S(0)=I$;
\\
(ii) $S(t) f(\eta)=f(\eta) + \int\limits_0^{t} \L S(s) f(\eta) \d s $;  
\\
(iii) $|S(t) f(\eta)-f(\eta)|\leq \|\eta\|L_f (e^{2C(M+m_p+1)t}-1)$;
 \\[.2cm]
(iv) $S(t)f(\eta)$ is continuous in $t$;  
\\[.2cm]
(v) $\L S(t)f(\eta)$ is continuous in $t$;  
\\[.2cm]
(vi) $\displaystyle{\lim_{t\searrow 0}\frac{S(t)f(\eta) - f(\eta)}{t} = \L f(\eta)}$;
\\[.2cm]
(vii) $\L S(t)f(\eta) = S(t)\L f(\eta)$.
\end{lemma}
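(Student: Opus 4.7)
The plan is to establish each property by passing from the finite volume analogs for $S_n(t)$ — where they are either evident from the Markov chain structure on $\N^{\X_n}$ or follow from Subsection \ref{ex1} — to the limit $S(t)$ of Definition \ref{sem_def}. The uniform bounds in Lemmas \ref{bound1} and \ref{an2.2}, which are preserved in the limit, will allow free exchange of limits and integrals via dominated convergence.

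For (i), the semigroup property $S_n(t_1+t_2)=S_n(t_1)S_n(t_2)$ holds because $S_n$ is the Markov semigroup of a jump process on a finite state space; applying Lemma \ref{an2.6} to $f$ and then to $S_n(t_2)f$ (which lies in $\Lip$ with Lipschitz constant bounded by $L_f e^{2C(M+m_p+1)t_2}$ uniformly in $n$ by Lemma \ref{an2.2}(a)) passes the equality to the limit, and $S(0)=I$ is immediate. For (ii), starting from \eqref{nsemigroup}, I would pass to the limit inside the integral using dominated convergence, supported by the bound $|\L_n S_n(s)f(\eta)|\leq L_f e^{2C(M+m_p+1)s} C(1+m_p+M)\|\eta\|$ (combine Lemma \ref{bound1} with Lemma \ref{an2.2}(a)) and the pointwise convergence $\L_n S_n(s)f(\eta)\to \L S(s)f(\eta)$. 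The latter is obtained by splitting
\[
\L_n S_n(s)f - \L S(s)f \;=\; \L_n\bigl(S_n(s)f - S(s)f\bigr) + (\L_n - \L)\, S(s)f,
\]
where the second term vanishes by Lemma \ref{L-n-conv} applied to $S(s)f\in\Lip$, and for the first one writes out the finite sum defining $\L_n$ and applies dominated convergence termwise in $(x,y,k)$ using the uniform Lipschitz estimate and Lemma \ref{an2.6}. Property (iii) then follows at once by integrating the pointwise bound on $|\L S(s)f(\eta)|$ over $[0,t]$.

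Items (iv)--(vii) are obtained by bootstrapping from (i)--(iii). For (iv), continuity in $t$ of the integrand in (ii) yields continuity of $S(t)f(\eta)$. For (vii), the finite volume forward/backward identity $\L_n S_n(t) f = S_n(t) \L_n f$ is passed to the limit: the left side converges to $\L S(t)f(\eta)$ by the argument used for (ii), while for the right side one must extend Definition \ref{sem_def} of $S(t)$ to functions $g$ with $|g(\eta)|\leq c_0\|\eta\|$ (which includes $\L f$ by \eqref{bound-to-generator}), using Lemma \ref{an2.2}(b) and rerunning Corollary \ref{coro:St} and Lemma \ref{an2.6} in that larger class. Then (v) follows from (vii) combined with continuity in $t$ of $S(t)\L f(\eta)$, derived as in (iv) but in the enlarged class; and (vi) follows from (ii), (v), and the fundamental theorem of calculus.

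The main obstacle will be the extension of $S(t)$ to functions of linear growth required for (vii), since $\L f$ is not in $\Lip$ but only satisfies $|\L f(\eta)|\leq L_f C(1+M+m_p)\|\eta\|$. Verifying $S_n(t)\L_n f(\eta)\to S(t)\L f(\eta)$ needs both the pointwise convergence $\L_n f\to \L f$ from Lemma \ref{L-n-conv} and the convergence of $S_n(t)g$ to $S(t)g$ for fixed $g$ of linear growth; this in turn requires reestablishing the Cauchy estimates of Corollary \ref{coro:St} in this larger class, which should follow by combining Lemma \ref{an2.2}(b) with the same telescoping and domination device used in Lemma \ref{an2.6}, together with the integration-by-parts identity $S_n(t)g - S_m(t)g = \int_0^t S_n(s)(\L_n-\L_m) S_m(t-s) g\,ds$ applied to the linearly bounded $g$.
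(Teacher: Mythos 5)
Your overall scheme is the same as the paper's: the paper's proof of Lemma \ref{lemma-spitzer} consists precisely in saying that the finite-volume estimates already established (Lemmas \ref{bound1}--\ref{coro:St}, \ref{L-n-conv}, \ref{an2.6}) allow one to repeat the arguments of Liggett--Spitzer's Lemma 2.16, and the only detail it records is the bound $|\L_n S_n(t)f(\eta)|\le L_f e^{2C(M+m_p+1)t}C(1+m_p+M)\|\eta\|$, which is exactly the estimate you use for (ii)--(iii). Your treatment of (ii) (the splitting $\L_n S_n(s)f-\L S(s)f=\L_n(S_n(s)f-S(s)f)+(\L_n-\L)S(s)f$, with termwise dominated convergence over $(x,y,k)$ using the uniform Lipschitz bound of Lemma \ref{an2.2}(a)) and of (iii), (iv), (vi) is sound and in the spirit of the cited argument. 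Two remarks, one minor and one substantive.

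Minor: in (i), ``applying Lemma \ref{an2.6} to $S_n(t_2)f$'' is not literally available, since that function changes with $n$; you need the decomposition $S_n(t_1)S_n(t_2)f-S(t_1)S(t_2)f=S_n(t_1)\bigl(S_n(t_2)f-S(t_2)f\bigr)+\bigl(S_n(t_1)-S(t_1)\bigr)S(t_2)f$, and the first term requires an extra uniform-integrability step (via the first-moment bound of Lemma \ref{an2.1}), not just pointwise convergence. Substantive: the extension of the Cauchy estimates of Corollary \ref{coro:St} and Lemma \ref{an2.6} to functions $g$ that are merely of linear growth, which you invoke for (vii) and, indirectly, for (v), does not go through ``by the same telescoping and domination device''. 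That device rests on Lemma \ref{an2.4}, i.e.\ on the Lipschitz constant of $S_m(t-s)g$: for a Lipschitz function the increment under a $k$-particle jump is of size $k(a_x+a_y)$, which assumption \eqref{un-assum} turns into a first-moment quantity controlled by Lemma \ref{an2.1}. For $g$ with only $|g(\eta)|\le c_0\|\eta\|$ (and $\L f$ is in general not Lipschitz, only linearly bounded by \eqref{bound-to-generator}), the increment is only of size $\|\eta\|$, and the bound on $S_n(s)(\L_n-\L_m)S_m(t-s)g$ degenerates into second-moment quantities of the type $\mathbb{E}^{\eta}[\|\eta_s\|\,\eta_s(x)]$, which none of the established lemmas control; Lemma \ref{an2.2}(b) gives a growth bound but no convergence. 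The standard repair, and presumably what the cited argument does, avoids this extension altogether: take the finite-volume identity $\L_n S_n(t)f=S_n(t)\L_n f$, note that its left-hand side converges to $\L S(t)f$ by the very argument you gave for (ii), and interpret (equivalently, define) $S(t)\L f$ as the common limit; if one wants $S_n(t)\L f$ (with the full generator) to converge to the same quantity, compare it with $S_n(t)\L_n f$ using the pointwise tail bound on $(\L-\L_n)f$ and the first-moment estimate of Lemma \ref{an2.1}. Similarly, (v) does not need (vii): write $\L S(t)f-\L S(t')f=\L\bigl(S(t)f-S(t')f\bigr)$ and use (iv) together with the uniform Lipschitz bounds and dominated convergence over $(x,y,k)$.
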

 \begin{proof}
The properties given in the previous Lemmas enable to use exactly the same arguments as in the paper
\cite{liggett-spitzer} to derive \cite[Lemma 2.16]{liggett-spitzer}. 
Hence we refer to it, and just note that for \textit{(iii)}, using Lemmas \ref{bound1} and \ref{an2.2}\textit{(a)} we have:
$$ |\L_n S_n(t) f(\eta) |\leq  L_{S_nf} C (1+m_p+M)  \|\eta\| \leq L_{f} e^{2C(M+m_p+1)t} C (1+m_p+M) \|\eta\|.   $$
\end{proof} 
\mbox{}
Now we are ready to prove Theorem \ref{ex-theorem} and Proposition \ref{intL=0}.
\begin{proof}\textit{of Theorem \ref{ex-theorem}.} \ \
We have that \textit{(1)} follows from Lemma  \ref{an2.2}\textit{(a)} and the definition \eqref{Sdef}
of $S(t)$, that \textit{(2)} follows from Lemma \ref{lemma-spitzer}\textit{(ii)}, and that
\textit{(3)} is again a consequence of definition \eqref{Sdef}.
\end{proof}

\begin{proof}\textit{of Proposition \ref{intL=0}.}  
\mbox{}\\ 
\noindent $\bullet$
Properties \textit{(i)--(vii)} of Lemma \ref{lemma-spitzer} 
and the same arguments as in  \cite[Lemma 2.17]{liggett-spitzer} 
(see also \cite[Lemma 2.9]{andjel}) yield that $\bar{\mu}$ is invariant if and only if 
\begin{equation} \label{bound-intL=0} 
\int \L f \ d\bar{\mu} =0 \ \text{ for every bounded } f\in\Lip.
\end{equation}
 Namely we write that, for a bounded $f\in\Lip$,
\begin{eqnarray}\label{for-intL=0} 
\int (S(t) f(\eta)-f(\eta))\ d\bar{\mu}(\eta)
=\int\Bigl(\int\limits_0^{t} \L S(s) f(\eta) \d s\Bigr)\ d\bar{\mu}(\eta)
=\int\limits_0^{t}\int\Bigl( \L S(s) f(\eta)\ d\bar{\mu}(\eta)\Bigr) \d s=0
\end{eqnarray}
where  \eqref{bound-to-generator} enables to exchange limits.\par 
\smallskip\noindent $\bullet$
Now consider an arbitrary $f\in\Lip$ and assume \eqref{bound-intL=0} holds. 
The sequence of bounded Lipschitz functions defined by, for $\eta\in\XX$,
$$
f_m(\eta) = \left\{\begin{array}{ll}
              f(\eta) & \text{if } |f(\eta)|\leq m \\
              m       & \text{if } f(\eta)  > m \\
              -m      & \text{if } f(\eta)  < -m \\
              \end{array}
             \right. 
$$
 satisfies, for every $\eta\in\XX$, $\lim_{m\rightarrow +\infty} f_m(\eta) = f(\eta) $, as well as 
 $\lim_{m\rightarrow +\infty} \L f_m(\eta) = \L f(\eta) $. Indeed, 
 the Lipschitz constant of $f_m$ is $L_f$ for every $m$, and we have
$$|\L f_m(\eta) -\L f(\eta) |\leq \sum_{x,y\in\X}p(x,y) 
\sum_{k=1}^{\eta(x)} g^k_{\eta(x),\eta(y)} |f_m(\etb)-f(\etb) + f(\eta) -f_m(\eta)| $$
which is dominated (using \eqref{un-assum})
by $2CL_f \sum_{x,y\in\X}p(x,y) (a_x + a_y) (\eta(x) + \eta(y) )$, which is finite
(cf. the proof of Lemma \ref{bound1}).
Because of \eqref{bound-to-generator}, we know that $|\L f_m(\eta) | $ 
is dominated by $L_f\|\eta\|$ (up to a multiplicative constant)
which is $\bar{\mu}$-integrable, we can use the dominated convergence theorem and conclude that
$$\int \L f(\eta) d\bar{\mu}(\eta) =\lim_{m\rightarrow \infty} \int \L f_m(\eta) d\bar{\mu}(\eta) =0. $$
We have therefore proved that $\bar{\mu}$ is invariant if and only if 
\begin{equation}\label{necc-intL=0}
\int \L f \ d\bar{\mu} =0    \ \text{ for every }  f\in \Lip.
\end{equation}
\noindent $\bullet$
Now fix a bounded $f\in\Lip$ and assume that $\int \L g \ d\bar{\mu} =0 $ 
for every bounded cylinder function~$g$. Given the sequence \eqref{seqX_n} of finite sets,
one can define a sequence of bounded cylinder functions
$$
f_n(\eta) = f(\eta\!\! \upharpoonright_n) \quad \text{ where }\  \eta\!\! \upharpoonright_n (x) = 
             \left\{\begin{array}{ll}
              \eta(x) & \text{if } x\in \X_n \\
              0       & \text{if } x\not\in \X_n 
              \end{array}
             \right. 
$$
that satisfy $\lim_{n\rightarrow \infty} f_n(\eta) = f(\eta) $ for every $\eta$. 
Using the same arguments as before we obtain that
$$\int \L f(\eta) d\bar{\mu}(\eta) = \int \lim_{n\rightarrow \infty}\L f_n(\eta) d\bar{\mu}(\eta) 
=  \lim_{n\rightarrow \infty} \int \L f_n(\eta) d\bar{\mu} (\eta)=0. $$
Hence the proposition is proved.
\end{proof}

\section*{Acknowledgements}   We thank the anonymous referee for his/her extremely careful
reading of our manuscript, which helped us to improve it. We are grateful to Stefan Grosskinsky
for useful discussions.  This work was supported by the 
French Ministry of Education through the grants ANR-07-BLAN-0230 and ANR-2010-BLAN-0108, 
by the Grant Agency of the Czech Republic under Grant No.~201/12/2613.  
We thank MAP5 at Universit\'e Paris Descartes and \'UTIA in Prague for hospitality and financial support. 


\begin{thebibliography}{99}
\setlength\itemsep{1mm}

\bibitem{andjel} Andjel, E.D.   Invariant Measures for the Zero Range Process.
\textit{Ann. Probab.} \textbf{10} (1982), 525--547.
%
\bibitem{AL}
Armend\'ariz, I., Loulakis,  M. Thermodynamic Limit for the Invariant 
Measures in Supercritical Zero Range Processes, 
\textit{Probab. Th. Rel. Fields}, \textbf{145} (2009) no. 1--2,  175--188. 
%
\bibitem{BC} Barraquand, G., Corwin,  I. The $q$-Hahn asymmetric exclusion process. 
Preprint (2015). arXiv:1501.03445
%
\bibitem{BL} Beltran, J., Landim, C. 
Metastability of reversible condensed zero range processes on a finite set. 
\textit{Probab. Th. Rel. Fields}, \textbf{152} (2012), 781--807.  
%
 \bibitem{bor1}  Borrello, D. 
Stochastic order and attractiveness for particle systems with multiple births, 
deaths and jumps. \textit{Electron. J. Probab.} \textbf{16} (2011), no. 4, 106--151.
%
\bibitem{bor2}  Borrello, D.
 On the role of Allee effect and mass migration in survival and extinction 
 of a species. \textit{Ann. Appl. Probab.} \textbf{22} (2012), no. 2, 670--701. 
%
\bibitem{CG}
Chleboun P., Gro\ss{}kinsky S.  
Condensation in stochastic particle systems with 
stationary product measures. \textit{J. Stat. Phys.} \textbf{154}(1-2) (2014), 432--465.
%
\bibitem{Cocozza} Cocozza, C. T.  Processus des misanthropes. 
\textit{Zeitschrift f\"{u}r Wahrscheinlichkeitstheorie verw.
Geb.} \textbf{70} (1985), 509--523.
%
%
\bibitem{evans0}  Evans, M. R. Phase transitions in one-dimensional nonequilibrium systems, 
\textit{Braz. J. Phys.} \textbf{30}, no. 1  (2000), 42--57.
%
\bibitem{EH} Evans, M. R. and Hanney, T. Nonequilibrium Statistical Mechanics of the
Zero-Range Process and Related Models. \textit{J. Phys. A: Math. Gen.} \textbf{38} (2005), R195.
%
\bibitem{EMZ1} Evans, M. R., Majumdar, S.N. and Zia, R.K.P. 
Factorized steady states in mass transport models.
\textit{J. Phys. A: Math. Gen.} \textbf{37} (2004), L275--L280. 
%
\bibitem{EMZ2} Evans, M. R., Majumdar, S.N. and Zia, R.K.P. 
Factorised steady states in mass transport models on an arbitrary graph.
\textit{J. Phys. A} \textbf{39} (2006), 4859.
%
\bibitem{FeLaSi} Ferrari, P.A., Landim, C., Sisko, V. 
Condensation for a fixed number of independent random variables. 
\textit{J. Stat. Phys.} \textbf{128}(5) (2007), 1153--1158.
%
\bibitem{GS}
Gobron, T., Saada, E. 
Couplings, attractiveness and hydrodynamics for conservative particle systems.
\textit{Ann.\ Inst.\ H.\ Poincar\'e Probab.\ Statist.} \textbf{46}, No.~4 (2010), 1132--1177.
%
 \bibitem{godreche07}
Godr\`{e}che, C. From urn models to zero-range processes: statics and dynamics.
\textit{Lect. Notes Phys.}~716, 261  (2007).
%
\bibitem{GrL}
Greenblatt, R.L., Lebowitz, J.L. Product Measure Steady States of Generalized Zero
Range Processes. \textit{J. Phys. A: Math. Gen.} \textbf{39} (2006), 1565-1573.
%
\bibitem{GSS}
Gro\ss{}kinsky S., Sch\"utz G.M. and Spohn H. Condensation in the
zero range process: stationary and dynamical properties.
\textit{J. Stat. Phys.}, \textbf{113} (2003), 389--410.
%
%
\bibitem{liggett-spitzer} Liggett, T. M., Spitzer, F.  
Ergodic theorems for coupled random walks and other systems with 
locally interacting components. \textit{Zeitschrift f\"{u}r Wahrscheinlichkeitstheorie verw.
Geb.} \textbf{56} (1981), 443\,--\,468. 
%
\bibitem{Lig}
Liggett, T.M.  {\it Interacting Particle Systems.}
Reprint of the 1985 original. Classics in Mathematics. Springer-Verlag, Berlin, 2005.
%
\bibitem{LG}
 Luck, J.M., Godr\`{e}che, C.     Nonequilibrium stationary states with
Gibbs measure for two and three species of interacting particles
 \textit{J. Stat. Mech.} (2006), P08009. 
%
\bibitem{LG07}
Luck, J.M., Godr\`{e}che, C.      Structure of the stationary state of the asymmetric target process
 \textit{J. Stat. Mech.}  (2007), P08005. 
%
\bibitem{RW} Rogers, L.C.G., Williams, D. 
\textit{Diffusions, Markov Processes and Martingales.}
Cambridge University Press, UK, Vol. 1 (2000). 
%
\bibitem{rcg}
Rafferty T., Chleboun P., Gro\ss{}kinsky S. 
Monotonicity and condensation in homogeneous stochastic particle systems. Preprint (2015).
ArXiv: 1505.02049
%
%
\bibitem{SRB}  Sch\"utz, G. M.,  Ramaswamy, R. , Barma, M. 
Pairwise balance and invariant measures for generalized exclusion processes.
\textit{J. Phys. A: Math. Gen.} \textbf{ 29} (1996), 837\,--\,843.
%
\bibitem{Sepa1}  Sepp\"al\"ainen, T. 
A microscopic model for the Burgers equation and longest increasing subsequences.
\textit{Electronic J. Probab.} \textbf{1}, (1996), Paper 5, 1\,--\,51.
%
\bibitem{spitzer} Spitzer,  F. Interaction of Markov Processes. 
{\sl Adv. Math.} {\bf 5} (1970), 247--290.
%
\end{thebibliography}
\end{document}